\newcommand{\Comp}{\operatorname{Comp}}
\newcommand{\Av}{\operatorname{Av}}
\DeclareMathOperator{\SW}{\mathsf{SW}}
\newtheorem*{rep@theorem}{\rep@title}
\newcommand{\newreptheorem}[2]{%
\newenvironment{rep#1}[1]{%
 \def\rep@title{#2 \ref{##1}}%
 \begin{rep@theorem}}%
 {\end{rep@theorem}}}
\newtheorem{theorem}{Theorem}[section]
\newtheorem{lemma}{Lemma}[section]
\newtheorem{proposition}{Proposition}[section]
\newtheorem{corollary}{Corollary}[section]
\newtheorem{conjecture}{Conjecture}[section]
\newtheorem{problem}{Problem}[section]
\theoremstyle{definition}
\newtheorem{definition}{Definition}[section]
\newtheorem{remark}{Remark}[section]
\newtheorem{example}{Example}[section]
\DeclareMathOperator{\des}{des}
\DeclareMathOperator{\peak}{peak}
\DeclareMathOperator{\tl}{tl}
\DeclareMathOperator{\leg}{leg}
\begin{document}
\title{Counting 3-Stack-Sortable Permutations}
\author{Colin Defant}
\address{Princeton University \\ Fine Hall, 304 Washington Rd. \\ Princeton, NJ 08544}
\email{cdefant@princeton.edu}

\begin{abstract}
We prove a ``decomposition lemma" that allows us to count preimages of certain sets of permutations under West's stack-sorting map $s$. As a first application, we give a new proof of Zeilberger's formula for the number $W_2(n)$ of $2$-stack-sortable permutations in $S_n$. Our proof generalizes, allowing us to find an algebraic equation satisfied by the generating function that counts $2$-stack-sortable permutations according to length, number of descents, and number of peaks. This is also the first proof of this formula that generalizes to the setting of $3$-stack-sortable permutations. Indeed, the same method allows us to obtain a recurrence relation for $W_3(n)$, the number of $3$-stack-sortable permutations in $S_n$. Hence, we obtain the first polynomial-time algorithm for computing these numbers. We compute $W_3(n)$ for $n\leq 174$, vastly extending the $13$ terms of this sequence that were known before. We also prove the first nontrivial lower bound for $\lim\limits_{n\to\infty}W_3(n)^{1/n}$, showing that it is at least $8.659702$. Invoking a result of Kremer, we also prove that $\lim\limits_{n\to\infty}W_t(n)^{1/n}\geq(\sqrt{t}+1)^2$ for all $t\geq 1$, which we use to improve a result of Smith concerning a variant of the stack-sorting procedure. Our computations allow us to disprove a conjecture of B\'ona, although we do not yet know for sure which one. 

In fact, we can refine our methods to obtain a recurrence for $W_3(n,k,p)$, the number of $3$-stack-sortable permutations in $S_n$ with $k$ descents and $p$ peaks. This allows us to gain a large amount of evidence supporting a real-rootedness conjecture of B\'ona. Using part of the theory of valid hook configurations, we give a new proof of a $\gamma$-nonnegativity result of Br\"and\'en, which in turn implies an older result of B\'ona. We then answer a question of the current author by producing a set $A\subseteq S_{11}$ such that $\sum_{\sigma\in s^{-1}(A)}x^{\des(\sigma)}$ has nonreal roots. We interpret this as partial evidence against the same real-rootedness conjecture of B\'ona that we found evidence supporting. Examining the parities of the numbers $W_3(n)$, we obtain strong evidence against yet another conjecture of B\'ona. 
We end with some conjectures of our own.  
\end{abstract}

\maketitle

\bigskip

\section{Introduction}\label{Sec:Intro}

\subsection{The Stack-Sorting Map}
We use the word ``permutation" to refer to an ordering of a set of positive integers written in one-line notation. Let $S_n$ denote the set of permutations of the set $[n]:=\{1,\ldots,n\}$. If $\pi$ is a permutation of length $n$, then the \emph{normalization} of $\pi$ is the permutation in $S_n$ obtained by replacing the $i^\text{th}$-smallest entry in $\pi$ with $i$ for all $i\in[n]$. We say a permutation is \emph{normalized} if it is equal to its normalization. A \emph{descent} of a permutation $\pi=\pi_1\cdots\pi_n$ is an index $i\in[n-1]$ such that $\pi_i>\pi_{i+1}$. A \emph{peak} of $\pi$ is an index $i\in\{2,\ldots,n-1\}$ such that $\pi_{i-1}<\pi_i>\pi_{i+1}$. Let $\des(\pi)$ and $\peak(\pi)$ denote the number of descents of $\pi$ and the number of peaks of $\pi$, respectively. 

\begin{definition}\label{Def3}
Given $\tau\in S_m$, we say a permutation $\sigma=\sigma_1\cdots\sigma_n$ \emph{contains the pattern} $\tau$ if there exist indices $i_1<\cdots<i_m$ in $[n]$ such that the normalization of $\sigma_{i_1}\cdots\sigma_{i_m}$ is $\tau$. We say $\sigma$ \emph{avoids} $\tau$ if it does not contain $\tau$. Let $\Av(\tau^{(1)},\ldots,\tau^{(r)})$ denote the set of normalized permutations that avoid the patterns $\tau^{(1)},\ldots,\tau^{(r)}$. Let $\Av_n(\tau^{(1)},\ldots,\tau^{(r)})=\Av(\tau^{(1)},\ldots,\tau^{(r)})\cap S_n$. 
\end{definition}

The study of permutation patterns is now a major area of research; it began with Knuth's analysis of a certain ``stack-sorting algorithm" \cite{Knuth}. In his dissertation, West \cite{West} defined a deterministic variant of Knuth's algorithm. This variant is a function, which we call the ``stack-sorting map" and denote by $s$, that sends permutations to permutations. The stack-sorting map has now been studied extensively \cite{Albert, Bona, BonaSurvey, BonaWords, BonaSimplicial, BonaSymmetry, Bousquet98, Bousquet, Bouvel, BrandenActions, Branden3, Claessonn-4, Claesson, Cori, DefantCatalan, DefantDescents, DefantEnumeration, DefantFertility, DefantFertilityWilf, DefantPolyurethane, DefantPostorder, DefantPreimages, DefantClass, DefantEngenMiller, DefantKravitz, Dulucq, Dulucq2, Elder, Goulden, Hanna, Maya, Smith, Ulfarsson, West, Zeilberger}. The reader seeking further historical background and motivation should see one of the references \cite{Bona, BonaSurvey, DefantCatalan, DefantFertility, DefantPostorder, DefantPreimages, DefantClass, DefantEngenMiller}. 

To define the function $s$, let us begin with an input permutation $\pi=\pi_1\cdots\pi_n$. At any point in time during this procedure, if the next entry in the input permutation is smaller than the entry at the top of the stack or if the stack is empty, the next entry in the input permutation is placed at the top of the stack. Otherwise, the entry at the top of the stack is annexed to the end of the growing output permutation. This process terminates when the output permutation has length $n$, and $s(\pi)$ is defined to be this output permutation. The following illustration shows that $s(4162)=1426$. 

\begin{center}
\includegraphics[width=1\linewidth]{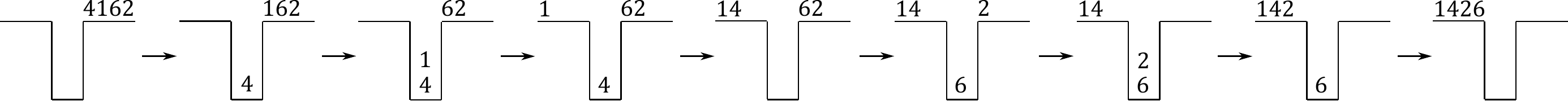}
\end{center}

\begin{definition}\!\!\!\footnote{The permutations that we call $t$-stack-sortable are often called ``West $t$-stack-sortable," but we have dropped the word ``West" for brevity. The term ``$t$-stack-sortable" is sometimes used to refer to permutations that can be sorted using Knuth's (nondeterministic) algorithm with $t$ stacks in series; that use of the term is different from ours.}\label{Def1}
We say a permutation $\pi$ is $t$-\emph{stack-sortable} if $s^t(\pi)$ is an increasing permutation, where $s^t$ denotes the $t$-fold iterate of $s$. Let $\mathcal W_t(n)$ be the set of $t$-stack-sortable permutations in $S_n$, and let $\mathcal W_t(n,k)=\{\pi\in\mathcal W_t(n):\des(\pi)=k\}$ and $\mathcal W_t(n,k,p)=\{\pi\in\mathcal W_t(n,k):\peak(\pi)=p\}$. Let
\[W_t(n)=|\mathcal W_t(n)|,\quad W_t(n,k)=|\mathcal W_t(n,k)|,\quad\text{and}\quad W_t(n,k,p)=|\mathcal W_t(n,k,p)|.\]
\end{definition}

Knuth simultaneously initiated the study of stack-sorting and the investigation of permutation patterns with the following theorem. 

\begin{theorem}[\!\!\cite{Knuth}]\label{Thm1}
A permutation is $1$-stack-sortable if and only if it avoids the pattern $231$. Furthermore, \[W_1(n)=|\Av_n(231)|=C_n,\] where $C_n=\frac{1}{n+1}{2n\choose n}$ is the $n^\text{th}$ Catalan number. 
\end{theorem}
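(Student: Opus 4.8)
The plan is to prove both halves of the theorem by exploiting a recursive description of the stack-sorting map. Write a nonempty permutation $\pi$ as $\pi = L\,n\,R$, where $n$ is its largest entry, $L$ is the (possibly empty) prefix occurring before $n$, and $R$ is the (possibly empty) suffix occurring after $n$. First I would establish the fundamental identity
\[s(\pi) = s(L)\,s(R)\,n.\]
This is proved by tracing the algorithm: since $n$ is larger than every other entry, once $n$ is pushed onto the stack it remains at the bottom until all other entries have been pushed and popped; moreover, the entries of $L$ never interact with those of $R$ on the stack because $n$ separates them, so the output is exactly the result of stack-sorting $L$, followed by the result of stack-sorting $R$, followed by $n$. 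This is the one genuinely procedural step, and I expect the careful bookkeeping here to be the main (though modest) obstacle; everything afterward is formal.

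Granting the identity, the first assertion follows by strong induction on $|\pi|$, with the cases $|\pi| \le 1$ trivial. Because $n$ exceeds all other entries, $s(\pi) = s(L)\,s(R)\,n$ is increasing if and only if $s(L)$ and $s(R)$ are both increasing and $\max(L) < \min(R)$: when $s(L)$ and $s(R)$ are increasing, their last and first entries are $\max(L)$ and $\min(R)$, so concatenating the two increasing blocks and appending $n$ leaves no descent precisely under that inequality. Since pattern containment and the action of $s$ depend only on relative order, the inductive hypothesis applied to the normalizations of $L$ and $R$ says $s(L)$ and $s(R)$ are increasing exactly when $L$ and $R$ avoid $231$. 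Thus it remains to verify the purely combinatorial equivalence: $\pi$ avoids $231$ if and only if $L$ avoids $231$, $R$ avoids $231$, and $\max(L) < \min(R)$.

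For the forward direction, $L$ and $R$ inherit $231$-avoidance as subpermutations, and if some entry $a$ of $L$ exceeded some entry $b$ of $R$, then $a, n, b$ would form a $231$ pattern. For the reverse direction, consider a hypothetical occurrence $\pi_i, \pi_j, \pi_k$ of $231$ (so $\pi_k < \pi_i < \pi_j$). Since $n$ is the strict maximum, neither $\pi_i$ nor $\pi_k$ equals $n$. If $\pi_j = n$, then $\pi_i \in L$ and $\pi_k \in R$, so $\min(R) \le \pi_k < \pi_i \le \max(L)$, contradicting $\max(L) < \min(R)$. If $\pi_j \ne n$, then all three chosen entries avoid the position of $n$; reading off the possible distributions of the positions $i < j < k$ between $L$ and $R$, either all three lie in $L$ (contradicting $L$ avoids $231$), or all three lie in $R$ (contradicting $R$ avoids $231$), or $\pi_i \in L$ and $\pi_k \in R$ (again contradicting $\max(L) < \min(R)$). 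This completes the characterization $\mathcal W_1(n) = \Av_n(231)$.

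For the enumeration, set $a_n = |\Av_n(231)|$ with $a_0 = 1$. In the decomposition $\pi = L\,n\,R$ of a permutation in $\Av_n(231)$, if $|L| = k$ then the condition $\max(L) < \min(R)$ forces $L$ to use the value set $\{1,\ldots,k\}$ and $R$ to use $\{k+1,\ldots,n-1\}$; conversely, any $231$-avoiding choice on each block produces a $231$-avoiding permutation of $[n]$. Hence $a_n = \sum_{k=0}^{n-1} a_k\, a_{n-1-k}$, which is the Catalan recurrence, so $a_n = C_n$. (Alternatively, one reads a bijection between $\Av_n(231)$ and binary trees on $n$ nodes, or Dyck paths of semilength $n$, directly from this decomposition.) Combining the two halves yields $W_1(n) = |\Av_n(231)| = C_n$.
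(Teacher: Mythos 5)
Your proof is correct: the recursive identity $s(L\,n\,R)=s(L)\,s(R)\,n$, the induction showing $\mathcal W_1(n)=\Av_n(231)$, and the Catalan recurrence are all carried out accurately. The paper itself offers no proof of this statement—it is quoted from Knuth—and your argument is precisely the standard one found in the literature, so there is nothing to reconcile between the two.
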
 

In his dissertation, West conjectured a formula for $W_2(n)$, which Zeilberger later proved. 

\begin{theorem}[\!\!\cite{Zeilberger}]\label{Thm2}
We have \[W_2(n)=\frac{2}{(n+1)(2n+1)}{3n\choose n}.\] 
\end{theorem}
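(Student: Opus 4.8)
The plan is to count $\mathcal W_2(n)$ as a preimage set. By Theorem~\ref{Thm1}, a permutation is $1$-stack-sortable if and only if it avoids $231$, so $\mathcal W_2(n)=s^{-1}(\Av(231))\cap S_n$ and hence $W_2(n)=\sum_{\sigma\in\Av_n(231)}|s^{-1}(\sigma)|$. I would encode this in a generating function $F(x)=\sum_{n\geq 1}W_2(n)x^n$; but since the recursion I am about to set up will not close on its own, I expect to work instead with a bivariate series $F(x,z)$, where $z$ is a catalytic variable recording an auxiliary statistic of $\sigma$ — roughly, the structural data near the largest entry of $\sigma$ (or near its first descent) that one must remember in order to continue the recursion. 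The specialization $z=1$ recovers $F(x)$, and one may, if desired, also carry variables marking $\des$ and $\peak$ to get the refined equation mentioned in the abstract.

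The engine is the decomposition lemma together with the theory of valid hook configurations. Recall that $|s^{-1}(\sigma)|$ can be computed as a sum, over the valid hook configurations of $\sigma$, of products of Catalan numbers indexed by the sizes of the regions the hooks carve out of the plot of $\sigma$. When $\sigma$ avoids $231$ there should be a clean recursive description of this data: cutting $\sigma$ at a canonical place — its largest entry, say, which writes $\sigma=\sigma_L\,n\,\sigma_R$ with $\sigma_L,\sigma_R$ again $231$-avoiding — breaks $\sigma$ together with a valid hook configuration of it into smaller $231$-avoiders equipped with valid hook configurations, where each region exposed by the surgery is to be filled by an arbitrary $1$-stack-sortable permutation, contributing a factor of the Catalan generating function $C(x)$ (which satisfies $C=1+xC^2$). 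Translating this recursion into generating functions yields a polynomial functional equation with one catalytic variable, of the form $P\big(F(x,z),F(x,1),x,z\big)=0$ for an explicit polynomial $P$ built from $x$, $z$, and $C$.

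With this equation in hand I would invoke the kernel method (equivalently, the Bousquet-M\'elou--Jehanne theory for polynomial equations with one catalytic variable): choosing the algebraic branch $z=Z(x)$ that annihilates the appropriate factor of the kernel eliminates $F(x,z)$ and leaves an algebraic equation determining $F(x,1)=F(x)$. I expect $F(x)$ to satisfy an explicit minimal polynomial of degree three over $\mathbb Q(x)$, which is consistent with the appearance of $\binom{3n}{n}$ in the target formula. To finish, I would extract coefficients: introduce the order-$3$ Fuss--Catalan (ternary tree) series $\theta=\theta(x)$ defined by $\theta=1+x\theta^{3}$, so that $[x^n]\theta=\frac{1}{2n+1}\binom{3n}{n}$; use the cubic to express $F$ as an explicit polynomial in $x$ and $\theta$; and apply Lagrange inversion to the substitution $u=\theta-1=x(1+u)^{3}$. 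This should yield $W_2(n)=\frac{2}{(n+1)(2n+1)}\binom{3n}{n}$, which is the claimed formula.

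The crux — and the step most prone to error — is deriving the functional equation correctly from the decomposition lemma: one must be scrupulous about exactly which regions of $\sigma$ receive Catalan fillings, how descents and peaks are created or destroyed at the cut, and which statistic is forced to serve as the catalytic variable so that the recursion becomes genuinely self-referential. A slip there alters the kernel and hence the final algebraic equation. By contrast, once the equation is right, the kernel-method elimination and the Lagrange-inversion extraction are essentially mechanical, though recognizing the substitution that collapses the resulting cubic into the tidy Fuss--Catalan form takes a little care.
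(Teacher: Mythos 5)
Your overall architecture coincides with the paper's proof in Section~\ref{Sec:2-Stack}: view $\mathcal W_2(n)$ as $s^{-1}(\Av_n(231))$, decompose the $231$-avoiding image at its largest (non-tail) entry using the decomposition lemma, translate the resulting recursion into a polynomial equation with one catalytic variable, eliminate via Bousquet-M\'elou--Jehanne, and finish with the series $U=w(1+U)^3$ and Lagrange inversion. So this is not a different route; the question is whether the plan is actually carried far enough to constitute a proof.

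It is not, and the gap sits exactly where you flag it. You never identify the catalytic statistic or derive the functional equation, and your tentative guesses point the wrong way. In the paper the statistic is the tail length $\tl(\pi)$ of the $231$-avoider $\pi=s(\sigma)$ --- the number of terminal fixed points of the \emph{sorted} permutation --- not ``structural data near the largest entry of $\sigma$'' (the preimage) or near its first descent. The reason this particular statistic closes the recursion is twofold: the descent of $\pi$ at its largest non-tail entry is tail-bound, so in Corollary~\ref{Cor1} the admissible hooks are in bijection with the $\tl(\pi)$ tail points (the catalytic variable counts hook choices); and the two pieces $\pi_U^H$, $\pi_S^H$ are again $231$-avoiders whose normalized tail lengths are only bounded \emph{below}, which forces the ``$\geq\ell$'' refinement $B_{\geq\ell}(n)$ and yields the recurrence \eqref{Eq3} and then the kernel equation \eqref{Eq6}. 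Your description of the surgery is also off: the exposed pieces are not ``filled by an arbitrary $1$-stack-sortable permutation contributing $C(x)$''; rather, each piece carries its own $2$-stack-sortable preimages and is counted by the same refined numbers, with Catalan numbers entering only through the boundary values $B_{\geq\ell}(0)=|s^{-1}(12\cdots\ell)|=C_\ell$, i.e.\ through $C(z)$ in the catalytic variable $z$, not as Catalan fillings in $x$ (and no valid hook configurations are needed for the unrefined count). Without the correct statistic and recurrence the kernel, and hence the cubic $\mathcal R(v,w)$, cannot be written down, so the subsequent BMJ elimination and Lagrange inversion have nothing to act on; supplying precisely this missing step is the content of the paper's argument.
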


Combinatorial proofs of Zeilberger's theorem emerged later in \cite{Cori,Dulucq,Dulucq2,Goulden}. Some authors have investigated the enumeration of $2$-stack-sortable permutations according to various statistics \cite{BonaSimplicial,Bousquet98,Bouvel,Dulucq}. The articles \cite{Duchi} and \cite{Fang} give different proofs that new combinatorial objects called ``fighting fish" are counted by the numbers $W_2(n)$. The authors of \cite{Bevan} studied what they called ``$n$-point dominoes," and they have found that there are $W_2(n+1)$ such objects. 

There is very little known about $t$-stack-sortable permutations when $t\geq 3$ is fixed. \'Ulfarsson \cite{Ulfarsson} characterized $3$-stack-sortable permutations in terms of new ``decorated patterns," but the characterization is too unwieldy to yield any additional information. The recent paper \cite{Albert} shows that for every $t\geq 1$, the set of $t$-stack-sortable permutations can be described by a sentence in a first-order logical theory that the authors call $\mathsf{TOTO}$. The paper \cite{Claessonn-4} also investigates $t$-stack-sortable permutations when $t=n-r$ for some fixed $r$ (focusing on the case in which $r=4$). For fixed $t\geq 3$, the best known general upper bound for $W_t(n)$ (see \cite[Theorem 3.4]{BonaSurvey}), is the estimate 
\begin{equation}\label{Eq1}
W_t(n)\leq(t+1)^{2n}.
\end{equation} 
The current author showed \cite{DefantPreimages} that 
\begin{equation}\label{Eq10}
\lim_{n\to\infty}W_3(n)^{1/n}<12.53296\quad\text{and}\quad \lim_{n\to\infty}W_4(n)^{1/n}<21.97225.
\end{equation}
The limits in \eqref{Eq10} are known to exist (see Section~\ref{Sec:Data}). Recently, B\'ona has obtained a new proof of the first inequality in \eqref{Eq10} using ``stack words." It also follows from Theorem~\ref{Thm2} that 
\begin{equation}\label{Eq2}
\lim_{n\to\infty}W_t(n)^{1/n}\geq 6.75\quad\text{for all}\quad t\geq 2.
\end{equation}
When $t\geq 3$, we refer to \eqref{Eq2} as a ``trivial" lower bound for the growth rate of $W_t(n)$, even though it relies on the highly nontrivial enumeration of $2$-stack-sortable permutations. Remarkably, \eqref{Eq2} was the best known lower bound for $\lim\limits_{n\to\infty}W_t(n)^{1/n}$ for all $t\geq 2$ until now. 

B\'ona \cite{BonaSymmetry} proved that the polynomial $\displaystyle\sum_{k=0}^{n-1}W_t(n,k)x^k=\sum_{\sigma\in \mathcal W_t(n)}x^{\des(\sigma)}$ is symmetric and unimodal (see Section~\ref{Sec:Symmetry} for the relevant definitions). In fact, his proof actually shows that $\displaystyle\sum_{\sigma\in s^{-1}(A)}x^{\des(\sigma)}$ is symmetric and unimodal for every set $A\subseteq S_n$. Br\"and\'en strengthened this result with the following theorem (we define $\gamma$-nonnegativity in Section~\ref{Sec:Symmetry}). 

\begin{theorem}[\!\!\cite{BrandenActions}]\label{Thm3}
If $A\subseteq S_n$, then \[\sum_{\sigma\in s^{-1}(A)}x^{\des(\sigma)}=\sum_{m=0}^{\left\lfloor\frac{n-1}{2}\right\rfloor}\frac{|\{\sigma\in s^{-1}(A):\peak(\sigma)=m\}|}{2^{n-1-2m}}x^m(1+x)^{n-1-2m}.\] In particular, $\displaystyle\sum_{\sigma\in s^{-1}(A)}x^{\des(\sigma)}$ is $\gamma$-nonnegative. 
\end{theorem}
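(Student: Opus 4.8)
The plan is to prove the identity by analyzing the fiber structure of the stack-sorting map through the lens of \emph{valid hook configurations}, which encode how a permutation $\sigma$ decomposes via its descent tops under $s$. The starting point is the observation that $\gamma$-nonnegativity with the stated explicit expansion is equivalent to showing that the joint distribution of $\des$ and $\peak$ over $s^{-1}(A)$ is the ``standard'' one that makes $\sum x^{\des}$ expand in the basis $\{x^m(1+x)^{n-1-2m}\}$; concretely, for any finite multiset of permutations whose descent polynomial is $\gamma$-nonnegative with $\gamma$-coefficients given by the peak counts divided by $2^{n-1-2m}$, the expansion follows by the classical fact that a polynomial of degree $\le n-1$ is $\gamma$-nonnegative in this basis precisely when it lies in the span, and the coefficients are then uniquely determined. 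So it suffices to establish the \emph{local} statement: for each individual $\pi \in S_n$ and each $m$, the number of $\sigma \in s^{-1}(\pi)$ with $\peak(\sigma) = m$ times $2^{-(n-1-2m)}$, summed appropriately, reproduces $\sum_{\sigma\in s^{-1}(\pi)} x^{\des(\sigma)}$ in the claimed form; summing over $\pi \in A$ then gives the theorem.

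First I would recall the structure theorem for fertility (the cardinality of $s^{-1}(\pi)$) in terms of valid hook configurations of $\pi$: preimages of $\pi$ are in bijection with pairs consisting of a valid hook configuration $\mathcal{H}$ of $\pi$ together with, for each ``space'' created by the hooks, a choice of a $231$-avoiding permutation (equivalently a sequence of independent Catalan-type choices, one per descent of $\pi$ plus a tail). The crucial point — and this is where the peak statistic enters — is to track how $\des(\sigma)$ and $\peak(\sigma)$ depend on these choices. Each of the $\des(\pi)$ ``hooks'' contributes in a controlled way, and the descents of $\sigma$ split into a fixed part coming from $\pi$ and a variable part coming from the $231$-avoiding fillings. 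The key combinatorial lemma I would isolate is that, when one refines by peaks, each independent Catalan factor contributes a polynomial of the form $x^{a}(1+x)^{b}$-compatible pieces, and these multiply together; the number of ``free'' slots where a descent may or may not be created without forming a peak is exactly $n-1-2m$ when $\sigma$ has $m$ peaks, which is what forces the binomial-looking exponent.

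Concretely, the steps in order: (1) reduce the theorem to the single-permutation statement $\sum_{\sigma\in s^{-1}(\pi)}x^{\des(\sigma)} = \sum_m \frac{P_m(\pi)}{2^{n-1-2m}} x^m (1+x)^{n-1-2m}$ where $P_m(\pi) = |\{\sigma\in s^{-1}(\pi):\peak(\sigma)=m\}|$, by linearity in $A$; (2) recall the valid-hook-configuration parametrization of $s^{-1}(\pi)$ and the resulting product formula for its fertility; (3) refine that product formula by the pair of statistics $(\des,\peak)$ on $\sigma$, expressing the refined generating function as a product over the components of the hook configuration; (4) show each local factor, and hence the product, lies in the $\mathbb{Z}_{\ge 0}$-span of $\{x^i(1+x)^j : i + 2\cdot(\text{something}) + j = \text{local length} - 1\}$, with the ``$x$''-exponents additively tracking peaks; (5) match exponents to conclude the global $n-1-2m$ and extract the $\gamma$-coefficient as $P_m(\pi)/2^{n-1-2m}$; (6) sum over $\pi\in A$. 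The $\gamma$-nonnegativity conclusion is then immediate since all $P_m(\pi)$ are nonnegative integers.

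The main obstacle I anticipate is step (3)–(4): correctly bookkeeping the interaction between the descents/peaks of the outer permutation $\pi$ and those introduced inside each $231$-avoiding filling, because a peak of $\sigma$ can straddle the boundary between two hook-spaces, so the refined statistic does not naively factor as a product over spaces. Handling this will require a careful choice of which boundary descents get attributed to which component (a ``half-open interval'' convention), and proving a small identity of the shape $\sum_{\sigma'\in\Av(231), |\sigma'|=\ell}x^{\des(\sigma')} = $ (a $\gamma$-nonnegative polynomial with the right peak-refinement) — essentially the known $\gamma$-nonnegativity of the Narayana/descent polynomial of $231$-avoiding permutations — and then showing these local identities splice together consistently across the hook configuration. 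Once the boundary conventions are pinned down, the rest is a routine induction on the number of descents of $\pi$.
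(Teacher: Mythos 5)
Your proposal follows essentially the same route as the paper: reduce to a singleton $\{\pi\}$ by linearity, invoke the valid-hook-configuration fertility formula refined jointly by $\des$ and $\peak$ (the paper's Theorem~\ref{Thm10}, which already handles the boundary bookkeeping you worry about in steps (3)--(4)), expand each local Narayana factor via $N_q(x)=\sum_m\frac{V(q,m+1)}{2^{q-1-2m}}x^{m+1}(1+x)^{q-1-2m}$, and multiply, matching the resulting $\gamma$-coefficients against the peak generating function \eqref{Eq13}. The only cosmetic difference is that the paper does a direct exponent computation rather than your proposed induction on $\des(\pi)$, so your outline is correct and aligned with the published argument.
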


In the present article, we concern ourselves with the following four conjectures of B\'ona. Recall that a sequence $(a_n)_{n\geq 1}$ of positive numbers is called \emph{log-convex} if $(a_{n+1}/a_n)_{n\geq 1}$ is nondecreasing. 

\begin{conjecture}[\!\!\cite{Bona,BonaSurvey}]\label{Conj3}
For all $n,t\geq 1$, we have \[W_t(n)\leq{(t+1)n\choose n}.\]
\end{conjecture}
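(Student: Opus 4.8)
The natural plan is induction on $t$. The base cases $t=1$ and $t=2$ are immediate from Theorems~\ref{Thm1} and~\ref{Thm2}, since $C_n=\frac{1}{n+1}\binom{2n}{n}\le\binom{2n}{n}$ and $W_2(n)=\frac{2}{(n+1)(2n+1)}\binom{3n}{n}\le\binom{3n}{n}$ for every $n\ge1$; these are exactly the values of $t$ for which a clean closed form (and a bijective model by $(t+1)$-ary-tree-like objects, whose growth rate $\tfrac{(t+1)^{t+1}}{t^t}$ matches $\binom{(t+1)n}{n}^{1/n}$) is available. For the inductive step, recall that $\pi\in S_n$ is $(t+1)$-stack-sortable exactly when $s(\pi)$ is $t$-stack-sortable, so $\mathcal W_{t+1}(n)=\{\pi\in S_n:s(\pi)\in\mathcal W_t(n)\}$ and hence
\[
W_{t+1}(n)=\sum_{\tau\in\mathcal W_t(n)}\bigl|s^{-1}(\tau)\bigr|.
\]
Writing $f(\tau):=|s^{-1}(\tau)|$ for the fertility of $\tau$, the task is to deduce $\sum_{\tau\in\mathcal W_t(n)}f(\tau)\le\binom{(t+2)n}{n}$ from the inductive hypothesis $|\mathcal W_t(n)|\le\binom{(t+1)n}{n}$. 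This is precisely the kind of sum the decomposition lemma announced in the abstract is meant to handle: it rewrites fertilities — and, more finely, the polynomials $\sum_{\sigma\in s^{-1}(A)}x^{\des(\sigma)}$, compare Theorem~\ref{Thm3} — in a form one can sum over a structured set such as $\mathcal W_t(n)$. Iterating the identity $\mathcal W_t(n)=\{\pi:s(\pi)\in\mathcal W_{t-1}(n)\}$ together with the decomposition lemma, one would set up a recursion for $W_t(n)$ (or a version tracking descents and peaks) and then check that the resulting generating function is dominated coefficientwise by the algebraic series $\sum_{n\ge0}\binom{(t+1)n}{n}x^n$.

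The principal obstacle is that fertilities are wildly non-uniform, so no estimate of the form $\bigl(\max_\tau f(\tau)\bigr)\cdot|\mathcal W_t(n)|$ can work. For example, $s^{-1}(12\cdots n)=\Av_n(231)$ already has size $C_n$, so one member of $\mathcal W_t(n)$ can contribute on the order of $4^n$ to the sum, while the per-element ``budget'' $\binom{(t+2)n}{n}/\binom{(t+1)n}{n}$ grows only like a small exponential — about $(1024/729)^n$ when $t=2$. A proof must therefore use the decomposition lemma to exploit the fact that permutations of large fertility are correspondingly rare inside $\mathcal W_t(n)$, and — harder — must propagate this trade-off through all $t$ nested preimages tightly enough to reach the exact bound $\binom{(t+1)n}{n}$ rather than the far weaker $(t+1)^{2n}$ of~\eqref{Eq1}. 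Executing that bookkeeping is where essentially all the difficulty lies.

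For $t=3$ there is a more concrete route. The recurrence for $W_3(n)$ obtained in this paper makes it feasible to compute $W_3(n)$ for $n$ into the hundreds, so one first checks $W_3(n)\le\binom{4n}{n}$ directly over that range. Since $\binom{4n}{n}^{1/n}\to 256/27=9.481\ldots$, the conjecture for $t=3$ is, up to finitely many small cases, equivalent to the inequality $L\le 256/27$ for the growth rate $L:=\lim_{n\to\infty}W_3(n)^{1/n}$. Here one must be candid. The only bounds currently available, $8.659702\le L<12.53296$ (the lower bound proved in this paper, the upper bound~\eqref{Eq10}), straddle $256/27$; and the computed values, read together with B\'ona's conjecture that $(W_3(n))_{n\ge1}$ is log-convex (the relevant definition appears just above), seem to push $L$ \emph{above} $256/27$: under log-convexity the ratios $W_3(n+1)/W_3(n)$ are nondecreasing with limit $L$, so a single computed ratio exceeding $256/27$ already forces $L>256/27$ and hence $W_3(n)>\binom{4n}{n}$ for all large $n$. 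The realistic outcome of this ``proof plan'' is therefore not a proof: as the abstract already signals, the computation it calls for is far more likely to \emph{disprove} Conjecture~\ref{Conj3} (for some $t\ge3$), or else to disprove the log-convexity conjecture, and the genuine task is to determine which of the two must fail.
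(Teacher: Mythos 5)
The statement you were asked to prove is not a theorem of this paper: it is B\'ona's Conjecture~\ref{Conj3}, which the paper leaves unproven and, in fact, presents strong evidence against. So there is no proof in the paper to compare against, and—correctly—your proposal does not produce one either. Your opening plan (induction on $t$, writing $W_{t+1}(n)=\sum_{\tau\in\mathcal W_t(n)}|s^{-1}(\tau)|$ and hoping the decomposition lemma controls the sum) is only a sketch: nothing in the paper or in your write-up extracts an upper bound of the exact shape $\binom{(t+1)n}{n}$ from Lemma~\ref{Lem1}, whose applications here are exact recurrences for $W_2(n)$ and $W_3(n,k,p)$, not coefficientwise dominations; you acknowledge as much when you flag the non-uniformity of fertilities as the unexecuted ``bookkeeping.'' The genuinely correct part of your proposal is its final paragraph, which is essentially the paper's own Theorem~\ref{Thm12}: since $\binom{4n}{n}^{1/n}\to 256/27\approx 9.4815$ while the computed ratio $W_3(174)/W_3(173)\approx 9.4907$, log-convexity (Conjecture~\ref{Conj4}) would force $\lim_{n\to\infty}W_3(n)^{1/n}\geq 9.4907>256/27$, hence $W_3(n)>\binom{4n}{n}$ for all sufficiently large $n$. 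That is exactly the paper's conclusion: Conjectures~\ref{Conj3} and~\ref{Conj4} cannot both hold, with the data suggesting it is Conjecture~\ref{Conj3} that fails. In short, your submission should be read not as a proof with a gap but as a correct recognition that the statement is an open, and most likely false, conjecture; only if you had claimed the inductive argument actually closes would there be an error to point to.
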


\begin{conjecture}[\!\!\cite{BonaPrivate}]\label{Conj4}
For every $t\geq 1$, the sequence $(W_t(n))_{n\geq 1}$ is log-convex. 
\end{conjecture}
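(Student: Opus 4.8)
The plan is to reduce log-convexity of $(W_t(n))_{n\ge 1}$ to the statement that the ratio sequence $r_t(n):=W_t(n+1)/W_t(n)$ is nondecreasing in $n$. For $t=1$ this is immediate from Theorem~\ref{Thm1}, since $r_1(n)=C_{n+1}/C_n=\frac{2(2n+1)}{n+2}=4-\frac{6}{n+2}$ is visibly increasing. For $t=2$, the closed form of Theorem~\ref{Thm2} gives, after a short computation, $r_2(n)=\frac{3(3n+1)(3n+2)}{2(n+2)(2n+3)}$; the inequality $r_2(n)\le r_2(n+1)$ clears denominators to a polynomial inequality in $n$ that is routine to verify for all $n\ge 1$ (expand the difference, or check the first few values and compare leading terms). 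So the cases $t\in\{1,2\}$ are routine, and the interesting content is $t\ge 3$.

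For $t=3$ there is no product formula, but this paper supplies a recurrence for $W_3(n)$, which realizes $\sum_n W_3(n)x^n$ as a P-recursive series. The plan has two parts. First, evaluate the recurrence for $n\le 174$ and check $W_3(n)^2\le W_3(n-1)W_3(n+1)$ throughout; this settles the $t=3$ case in a strong empirical sense (and, were it to fail, would exhibit an explicit counterexample). This check is not independent of Conjecture~\ref{Conj3}: since $\binom{4n}{n}^{1/n}\to 256/27$, log-convexity of $(W_3(n))$ together with ratios $r_3(n)$ that exceed $256/27$ somewhere in the computed range would be contradictory, so at most one of those two conjectures can hold. Second, for an unconditional proof one performs singularity analysis on the recurrence to obtain an asymptotic expansion $W_3(n)\sim c\,\rho^n n^{\alpha}$; if the dominant singularity is simple with $\alpha<0$ and no logarithmic factor, then $r_3(n)\to\rho$ while being eventually increasing, so log-convexity holds for all $n\ge N_0$ and only the finitely many $n<N_0$ remain, covered by the first part. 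An even cleaner route, if available, would be a Jacobi continued-fraction expansion of $\sum_n W_3(n)x^n$ with nonnegative coefficients, which would give total positivity of the associated Hankel matrix and hence log-convexity; but producing one—equivalently, a combinatorial model for the Motzkin-path weights—is itself nontrivial and not suggested by the stack-sorting structure. I expect making the asymptotic analysis rigorous and sharp to be the main obstacle: one must pin down the location and nature of the dominant singularities of the D-finite series (ruling out logarithmic factors, handling a possible complex-conjugate pair) and carry enough of the expansion to certify the monotonicity of $r_3(n)$ down to an explicit $N_0$. A strong induction directly on the recurrence is the alternative, but the recurrence mixes several earlier terms with polynomial weights and is not manifestly monotonicity-preserving, so one would likely need to propagate finer data, such as the refined counts $W_3(n,k,p)$ that this paper also controls.

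For $t\ge 4$ the conjecture appears out of reach, as no formula or recurrence is known, and it is worth recording why the obvious structural shortcut fails. A short check gives $s(\alpha\oplus\beta)=s(\alpha)\oplus s(\beta)$, hence $s^t(\alpha\oplus\beta)=s^t(\alpha)\oplus s^t(\beta)$, so $\mathcal W_t$ is closed under direct sums and therefore $W_t(m+n)\ge W_t(m)W_t(n)$; by Fekete's lemma this reproves the existence of $\lim_{n\to\infty}W_t(n)^{1/n}$, but super-multiplicativity does not imply log-convexity. Indeed, the permutations that are direct sums of copies of $1$ and $21$ form a class closed under direct sums yet enumerated by the Fibonacci numbers, which are not log-convex by Cassini's identity. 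Hence the general case requires genuinely $t$-specific input, and the heart of the difficulty is that log-convexity is a sharp inequality among three consecutive terms, whereas growth-rate bounds, algebraicity of the generating functions, and the decomposition lemma all deliver only asymptotic or structural information, too coarse to control consecutive ratios exactly.
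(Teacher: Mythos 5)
This statement is a \emph{conjecture} in the paper (attributed to B\'ona via private communication), and the paper does not prove it: it only supplies computational evidence for $t=3$ up to $n=174$ and proves (Theorem~\ref{Thm12}) that Conjectures~\ref{Conj3} and~\ref{Conj4} cannot both hold. So there is no proof in the paper to compare against, and your proposal does not close the gap either. Your $t=1,2$ cases are fine and routine from Theorems~\ref{Thm1} and~\ref{Thm2}, but the heart of the matter, $t\geq 3$, remains unproved in your write-up. Concretely: (i) your assertion that the paper's recurrence ``realizes $\sum_n W_3(n)x^n$ as a P-recursive series'' is unjustified --- Theorem~\ref{Thm6} is a recurrence in three indices for the auxiliary quantities $B_{\geq\ell}^{(g)}(n)$, not a linear recurrence with polynomial coefficients in $n$ for $W_3(n)$; neither D-finiteness nor algebraicity of this generating function is known (the paper only notes non-rationality), so the proposed singularity analysis has no established object to act on; (ii) even granting an asymptotic $W_3(n)\sim c\,\rho^n n^{\alpha}$, a leading-order expansion does not yield eventual monotonicity of the ratios $W_3(n+1)/W_3(n)$ without effective control of lower-order terms, and your explicit threshold $N_0$ is not shown to lie within the computed range $n\leq 174$; (iii) for $t\geq 4$ you offer nothing, and the conjecture is stated for all $t\geq 1$.

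Two smaller points. Your verification that supermultiplicativity (via $s(\alpha\oplus\beta)=s(\alpha)\oplus s(\beta)$ and Fekete's lemma) does not imply log-convexity, illustrated by the Fibonacci-counted sum-closed class, is correct and matches the role this observation plays in the paper's Section~\ref{Sec:Data}, where it is used only to establish existence of the growth rate. Also note that your finite check for $t=3$ is exactly the paper's evidence, and your remark that large ratios would conflict with Conjecture~\ref{Conj3} is essentially Theorem~\ref{Thm12}; but evidence plus a conditional contradiction is not a proof, so the statement should still be regarded as open.
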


\begin{conjecture}[\!\!\cite{Elder}]\label{Conj6}
If $t$ is even, then $W_t(n)$ is frequently odd. If $t$ is odd, then $W_t(n)$ is rarely odd. 
\end{conjecture}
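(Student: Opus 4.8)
The plan is to reduce the whole conjecture to an exact understanding of the $2$-adic valuations $\nu_2(W_t(n))$, where $\nu_2(m)$ denotes the exponent of $2$ in $m$. For $t\in\{1,2\}$ this is feasible, because Theorems~\ref{Thm1} and~\ref{Thm2} express $W_t(n)$ as a ratio of factorials, so Kummer's and Legendre's theorems pin down $\nu_2(W_t(n))$ in terms of binary-digit data, and the relevant half of Conjecture~\ref{Conj6} becomes a concrete counting problem about binary expansions. For $t\ge 3$ there is no such closed form, and the only handle is the recurrence for $W_3(n)$ produced by the decomposition lemma; this is where I expect the approach to break down. Throughout, I take ``frequently'' and ``rarely'' in whatever precise quantitative sense is intended (e.g. the set of $n$ with $W_t(n)$ odd having positive lower density versus vanishing density, or containing polynomially-many versus sub-polynomially-many elements below $N$); the structural steps do not depend on the choice.

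\emph{The case $t=1$.} By Theorem~\ref{Thm1}, $W_1(n)=C_n=\binom{2n}{n}/(n+1)$. Kummer's theorem gives $\nu_2\binom{2n}{n}=s_2(n)$, where $s_2(m)$ is the sum of the binary digits of $m$, so $\nu_2(C_n)=s_2(n)-\nu_2(n+1)$. Writing $n+1=2^v m$ with $m$ odd, the binary expansion of $n$ is the binary expansion of $m-1$ followed by $v$ ones, whence $s_2(n)=s_2(m-1)+v\ge v=\nu_2(n+1)$, with equality if and only if $m=1$. Therefore $C_n$ is odd exactly when $n+1$ is a power of $2$, a set containing only $\lfloor\log_2(N+1)\rfloor$ elements below $N$. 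This is ``rare,'' confirming the odd-$t$ prediction for $t=1$.

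\emph{The case $t=2$.} By Theorem~\ref{Thm2}, $\nu_2(W_2(n))=1+\nu_2\binom{3n}{n}-\nu_2(n+1)-\nu_2(2n+1)=1+\nu_2\binom{3n}{n}-\nu_2(n+1)$, since $2n+1$ is odd. By Kummer's theorem, $\nu_2\binom{3n}{n}$ is the number $C(n)$ of carries in the base-$2$ addition of $n$ and $2n$, so $W_2(n)$ is odd if and only if $C(n)=\nu_2(n+1)-1$. A short inspection of base-$2$ addition shows this is impossible when $\nu_2(n+1)=0$ (i.e.\ $n$ even); impossible when $\nu_2(n+1)=v\ge 2$ (the $v$ lowest-order binary digits of $n$ are all $1$, which already forces $C(n)\ge v$); and, when $\nu_2(n+1)=1$, equivalent to $C(n)=0$, i.e.\ to $n$ having no two consecutive $1$'s in its binary expansion (which, for odd $n$, automatically gives $n\equiv 1\pmod 4$). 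So $W_2(n)$ is odd precisely when $n$ is odd with no two consecutive binary $1$'s; the number of such $n$ below $N$ is a Fibonacci-type count, asymptotically $\Theta(N^{\log_2\phi})$ with $\phi$ the golden ratio. This is ``frequent'' in the relevant polynomial-growth sense, vastly exceeding the logarithmically-many values from the case $t=1$, and confirms the even-$t$ prediction for $t=2$.

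\emph{The case $t\ge 3$, and the main obstacle.} With no product formula available, the only route is to work directly with the recurrence for $W_3(n)$ (and its refinement $W_3(n,k,p)$) coming from the decomposition lemma. The optimistic plan is to reduce that recurrence modulo $2$ and hope it collapses into a self-contained finite-state recursion, which would make $(W_3(n)\bmod 2)_{n\ge 1}$ a $2$-automatic sequence whose set of odd positions has a well-understood counting function. I expect this to fail: the recurrence for $W_3(n)$ is not ``pure''\,---\,it is naturally phrased in terms of the two-parameter quantities $W_3(n,k,p)$ together with further auxiliary generating-function data\,---\,and there is no reason its mod-$2$ reduction closes up into a finite automaton (contrast the case $t=2$, where the generating function is algebraic and Christol's theorem guarantees $2$-automaticity of its coefficients modulo $2$). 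Absent such structure, one cannot establish the $t=3$ half of Conjecture~\ref{Conj6} by these means. The realistic move is instead to compute $W_3(n)\bmod 2$ from the recurrence for all $n$ in the feasible range ($n\le 174$) and examine the statistics; and the upshot, consistent with the abstract, is that $W_3(n)$ is odd for far more values of $n$ than ``rarely'' would permit. Thus the honest outcome of this line of attack is not a proof of Conjecture~\ref{Conj6} but strong evidence against it for $t=3$, and the genuinely hard remaining problem is whether one can extract from the recurrence a rigorous lower bound on $|\{n\le N:W_3(n)\text{ odd}\}|$ large enough to refute the conjecture outright.
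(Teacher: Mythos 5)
This statement is a conjecture (due to B\'ona, quoted from \cite{Elder}); the paper does not prove it and in fact assembles evidence that it is false, so there is no paper proof to compare your argument against. With that understood, your write-up is correct as far as it goes and lines up with what the paper actually does. Your $t=1$ and $t=2$ analyses are right: $C_n$ is odd exactly when $n+1$ is a power of $2$, and $W_2(n)$ is odd exactly when $n$ is odd with no two consecutive $1$'s in binary, which reproduces the facts $\mathfrak g_1(2^r)=r$ and $\mathfrak g_2(2^r)=F_r$ that the paper records in Remark~\ref{Rem4} as B\'ona's motivation for the conjecture (and your density interpretation matches the $\limsup/\liminf$ of $\log\mathfrak g_t(m)/\log m$ formalism given there). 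Your assessment of $t\geq 3$ is also exactly the paper's position: Section~\ref{Sec:Data} uses the recurrence of Theorem~\ref{Thm6} to compute $W_3(n)\bmod 2$ for $n\leq 173$, finds $\mathfrak g_1(m)<\mathfrak g_2(m)\leq\mathfrak g_3(m)$ for $13\leq m\leq 660$, and concludes that $W_3(n)$ appears to be odd \emph{more} often than $W_2(n)$; the paper then states Conjectures~\ref{Conj8}--\ref{Conj10}, each of which contradicts Conjecture~\ref{Conj6}, and poses Problem~\ref{Prob1} (characterize the $n$ with $W_3(n)$ odd), suggesting, much as you do, that one should reduce the recurrence modulo $2$. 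The one caveat is framing: since the task was this conjecture, the correct deliverable is not a proof but precisely what you give --- verification of the two closed-form cases plus the observation that the $t=3$ half is unsupported and likely false --- so your ``honest outcome'' paragraph is the right conclusion, and the open question you end with (a rigorous lower bound on $\mathfrak g_3(m)$ refuting the conjecture outright) remains open in the paper as well.
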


\begin{conjecture}[\!\!\cite{BonaSymmetry}]\label{Conj5}
For all $n,t\geq 1$, the polynomial $\displaystyle\sum_{\sigma\in \mathcal W_t(n)}x^{\des(\sigma)}$ has only real roots. 
\end{conjecture}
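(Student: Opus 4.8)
The first step I would take is to transport the problem from descents to peaks using Theorem~\ref{Thm3}. For fixed $n$, Definition~\ref{Def1} gives $\mathcal{W}_t(n)=s^{-1}(\mathcal{W}_{t-1}(n))$, the preimage being taken inside $S_n$, so applying Theorem~\ref{Thm3} with $A=\mathcal{W}_{t-1}(n)$ and factoring out $\left(\frac{1+x}{2}\right)^{n-1}$ yields
\[\sum_{\sigma\in\mathcal{W}_t(n)}x^{\des(\sigma)}=\left(\frac{1+x}{2}\right)^{n-1}G_{t,n}\!\left(\frac{4x}{(1+x)^2}\right),\qquad\text{where }G_{t,n}(z):=\sum_{\sigma\in\mathcal{W}_t(n)}z^{\peak(\sigma)}.\]
The map $x\mapsto 4x/(1+x)^2$ carries $\mathbb{R}\setminus\{-1\}$ onto $(-\infty,1]$, and the fiber over a value $z_0$ is real precisely when $z_0\le 1$; since $G_{t,n}$ has nonnegative coefficients and positive constant term (the identity permutation has no peaks), real-rootedness of $G_{t,n}$ automatically places its roots in $(-\infty,0]$. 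Consequently $\sum_{\sigma\in\mathcal{W}_t(n)}x^{\des(\sigma)}$ is real-rooted if and only if the peak polynomial $G_{t,n}$ is real-rooted, and this is the statement I would try to establish. It has degree only $\lfloor(n-1)/2\rfloor$, and its $t=1$ instance follows from Theorem~\ref{Thm1}: since $\mathcal{W}_1(n)=\Av_n(231)$, the polynomial $\sum_{\sigma\in\mathcal{W}_1(n)}x^{\des(\sigma)}$ is a Narayana polynomial, which is classically real-rooted.

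For $t\ge 2$ the plan is to induct on $n$ with $t$ fixed, building on the recurrence for $W_t(n,k,p)$ produced by the decomposition lemma of this paper (worked out in detail for $t=3$, and in principle for larger $t$ by iterating the lemma). Summing that recurrence over $k$ gives a recurrence for the numbers $\mathcal{P}_{n,p}:=\sum_k W_t(n,k,p)=|\{\sigma\in\mathcal{W}_t(n):\peak(\sigma)=p\}|$, hence an expression for $G_{t,n}$ in terms of the polynomials $G_{t,n'}$ with $n'<n$. One would then hope to run the standard interlacing machinery (compatible and mutually interlacing families, in the style of Brenti, Wagner, and Savage--Visontai): introduce an auxiliary family of polynomials indexed by whatever extra data the decomposition lemma tracks — most likely statistics of valid hook configurations, analogous to the refinements underlying Theorem~\ref{Thm3} — verify that the operators occurring in the recurrence (multiplication by $z$ or $1+z$, the operator $z\frac{d}{dz}$, and nonnegative combinations thereof) preserve the pertinent interlacing relations, and check the finitely many base cases directly. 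Success here would give real-rootedness of $G_{t,n}$, hence of $\sum_{\sigma\in\mathcal{W}_t(n)}x^{\des(\sigma)}$.

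The inductive step is the obstacle, which is exactly why this remains a conjecture. The decomposition-lemma recurrence for $W_t(n,k,p)$ is of convolution type with several summation indices — it is engineered to yield a polynomial-time algorithm, not a closed form — and convolution recurrences routinely fail to propagate real-rootedness unless one hits upon precisely the right refinement; producing a family of polynomials on which the recurrence acts by interlacing-preserving operations is the whole difficulty, and it is conceivable that no such refinement exists even though the polynomials $\sum_{\sigma\in\mathcal{W}_t(n)}x^{\des(\sigma)}$ themselves are real-rooted. This worry is sharpened by the counterexample constructed later in this paper, where a set $A\subseteq S_{11}$ is exhibited with $\sum_{\sigma\in s^{-1}(A)}x^{\des(\sigma)}$ having nonreal roots: any correct argument must therefore exploit genuine structural features of the particular sets $\mathcal{W}_{t-1}(n)$ — that they are permutation classes, that they are themselves iterated preimages under $s$, and so on — rather than the bare identity $\mathcal{W}_t(n)=s^{-1}(\mathcal{W}_{t-1}(n))$. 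Isolating and using the right such feature is the main obstacle; lacking it, the present paper supplies the real-rootedness of $\sum_{\sigma\in\mathcal{W}_3(n)}x^{\des(\sigma)}$, verified for $n\le 174$, as evidence in place of a proof.
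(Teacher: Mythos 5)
The statement you were asked about is a \emph{conjecture} (due to B\'ona), and the paper does not prove it: its only contributions toward Conjecture~\ref{Conj5} are the computational verification in Proposition~\ref{Prop1} (the case $t=3$ for all $n\leq 43$, obtained from the recurrence of Theorem~\ref{Thm9} with $y=1$), the discussion in Remark~\ref{Rem4} of the previously known cases $t=1,2,n-1,n-2$, and the $S_{11}$ example together with Remark~\ref{Rem2}, which the author reads as evidence that any proof must use special structure of the sets $\mathcal W_{t-1}(n)$ rather than the bare identity $\mathcal W_t(n)=s^{-1}(\mathcal W_{t-1}(n))$. Your proposal is honest about exactly this, so there is no claim to check against a paper proof; what you offer is a strategy, and it is a reasonable one. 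Your preliminary reduction is correct: by Theorem~\ref{Thm3} with $A=\mathcal W_{t-1}(n)$ one gets the $\gamma$-expansion with $\gamma_m$ equal to peak counts, the algebra $\sum_{\sigma}x^{\des(\sigma)}=\left(\frac{1+x}{2}\right)^{n-1}G_{t,n}\!\left(\frac{4x}{(1+x)^2}\right)$ is right, and the standard two-to-one substitution argument does give the equivalence with real-rootedness of the peak polynomial (all of whose roots are automatically nonpositive). Your assessment of the obstacle — that the convolution-type recurrences of Theorems~\ref{Thm6} and \ref{Thm9} are built for computation, not for propagating interlacing, and that the $S_{11}$ example shows the real-rootedness cannot come from the preimage structure alone — matches the paper's own framing. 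Two small corrections: the paper verifies real-rootedness only for $n\leq 43$ (that is the range of the $W_3(n,k)$ computations), not $n\leq 174$, which is the range for the plain counts $W_3(n)$; and the paper does not iterate its decomposition lemma to produce recurrences for general $t\geq 4$, so that part of your plan is your own extrapolation rather than something available in the text.
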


\begin{remark}\label{Rem4}
B\'ona's motivation for formulating Conjecture~\ref{Conj3} came from the idea of encoding elements of $\mathcal W_t(n)$ as $n$-uniform words over a $(t+1)$-element alphabet (see \cite{BonaWords} and \cite{BonaSurvey} for more details). His motivation behind Conjecture~\ref{Conj4} came from an observation that the sequences $(W_t(n))_{n\geq 1}$ appear to be similar to the sequences that enumerate principal permutation classes, which he has also conjectured are log-convex. For example, B\'ona has observed that his methods in \cite{BonaRational} can be used to show that for fixed $n,t\geq 1$, the number of $t$-stack-sortable permutations of length $n$ with $c$ components is monotonically decreasing as a function of $c$. Similarly, his methods allow one to prove that the generating functions $\sum_{n\geq 1}W_t(n)x^n$ are not rational. B\'ona formulated Conjecture~\ref{Conj5} after observing that it holds when $t=1$ and when $t=n-1$ (it also holds when $t\geq n$ because this is equivalent to the $t=n-1$ case). Br\"and\'en \cite{Branden3} proved this conjecture in the cases $t=2$ and $t=n-2$, but the remaining cases are still open.  

Conjecture~\ref{Conj6} requires some explanation. Using B\'ona's result that $\sum_{\sigma\in\mathcal W_t(n)}x^{\des(\sigma)}$ is symmetric, one can easily deduce that $W_t(n)$ is even whenever $n$ is even. Therefore, it is natural to consider the parity of $W_t(n)$ when $n$ is odd. Let $\mathfrak g_t(m)$ be the number of integers $n$ with $1\leq n\leq m$ such that $W_t(n)$ is odd. Let $F_r$ denote the $r^\text{th}$ Fibonacci number (with $F_1=F_2=1$). Using Theorems~\ref{Thm1} and \ref{Thm2}, one can show that $\mathfrak g_1(2^r)=r$ and $\mathfrak g_2(2^r)=F_r$ for all positive integers $r$. B\'ona \cite{Elder} interpreted this as saying $W_1(n)$ is rarely odd while $W_2(n)$ is frequently odd, and this led him to formulate Conjecture~\ref{Conj6}. One could formalize this by saying that $W_t(n)$ is \emph{rarely odd} if $\limsup\limits_{m\to\infty}\dfrac{\log\mathfrak g_t(m)}{\log m}=0$ and is \emph{frequently odd} $\liminf\limits_{m\to\infty}\dfrac{\log\mathfrak g_t(m)}{\log m}>0$ (although B\'ona did not use this formalism). B\'ona's motivation behind Conjecture~\ref{Conj6} also came from the idea of encoding $t$-stack-sortable permutations as words.  
\end{remark}

\subsection{Summary of Main Results}
In Section~\ref{Sec:DecompLemma}, we formulate a ``decomposition lemma," which provides a new method for analyzing preimages of permutations under the stack-sorting map. We actually prove a stronger lemma, which we call the refined decomposition lemma, that allows us to take the statistics $\des$ and $\peak$ into account. In Section~\ref{Sec:Formulas}, we briefly review some formulas arising from the theory of new combinatorial objects called ``valid hook configurations." In Section~\ref{Sec:2-Stack}, we use the decomposition lemma to give a new proof of Zeilberger's formula for $W_2(n)$. We also use the refined decomposition lemma to find an algebraic equation satisfied by the generating function of the numbers $W_2(n,k,p)$. This equation is new. 

Our new proof of Zeilberger's formula is the first one that generalizes to the setting of $3$-stack-sortable permutations. In Section~\ref{Sec:3-Stack}, we use the refined decomposition lemma to prove a recurrence relation for the numbers $W_3(n,k,p)$. Specializing this theorem gives us a recurrence for $W_3(n,k)$, and specializing further gives a recurrence for $W_3(n)$. This yields the first polynomial-time algorithm for computing $W_3(n)$. According to Wilf \cite{Wilf}, we have solved the problem of counting $3$-stack-sortable permutations. More precisely, he would say that we have ``$p$-solved" this problem. 

Before now, the values of $W_3(n)$ were only known up to $n=13$. Indeed, the only algorithm that was used to compute these numbers before now relied on a brute-force approach. Using our recurrence, we have generated the values of $W_3(n)$ for $n\leq 174$. We have added these terms to sequence A134664 in the Online Encyclopedia of Integer Sequences \cite{OEIS}. There are two significant theoretical implications of these computations. First, we will see in Section~\ref{Sec:Data} that B\'ona's Conjectures~\ref{Conj3} and \ref{Conj4} cannot both be true. Thus, we have disproven a conjecture of B\'ona, although we do not yet know with absolute certainty which one. Let us remark, however, that the data suggests very strongly that Conjecture~\ref{Conj4} is true while Conjecture~\ref{Conj3} is false. Furthermore, it appears that our recurrence coupled with sufficient computing time (and clever computing!) should allow one to completely disprove Conjecture~\ref{Conj3}. Second, we will prove that $\lim\limits_{n\to\infty}W_3(n)^{1/n}\geq 8.659702$; this is the first nontrivial lower bound for $\lim\limits_{n\to\infty}W_3(n)^{1/n}$. In Section~\ref{Sec:Lower}, we prove that $\lim\limits_{n\to\infty}W_t(n)^{1/n}\geq(\sqrt{t}+1)^2$ for every $t\geq 1$, yielding the first nontrivial lower bounds for these growth rates for all $t\geq 4$. As a corollary, we improve a result of Smith concerning permutations that can be sorted by $t$ stacks in series using the so-called ``left-greedy algorithm" \cite{Smith}. Although there are multiple ways one could rigorously interpret B\'ona's Conjecture~\ref{Conj6}, we will see in Section~\ref{Sec:Data} that every reasonable interpretation of the conjecture is likely to be false.  

We have also computed the numbers $W_3(n,k)$ for $n\leq 43$, allowing us to verify Conjecture~\ref{Conj5} when $t=3$ and $n\leq 43$ (see OEIS sequence A324916 \cite{OEIS}). In Section~\ref{Sec:Symmetry}, we show that the formulas from Section~\ref{Sec:Formulas} easily imply Br\"and\'en's Theorem~\ref{Thm3}. We also provide a two-element set $A\subseteq S_{11}$ such that $\sum_{\sigma\in s^{-1}(A)}x^{\des(\sigma)}$ is not real-rooted. This provides a negative answer to the last part of Question 12.1 in \cite{DefantClass}, which we interpret as a small amount of evidence against B\'ona's Conjecture~\ref{Conj5}. Section~\ref{Sec:Conclusion} concludes the paper with a new conjecture about $\lim\limits_{n\to\infty}W_3(n)^{1/n}$ and several conjectures about the numbers $\mathfrak g_3(m)$ (defined in Remark~\ref{Rem4}). 

Before we proceed, let us make one additional remark about the usefulness of the decomposition lemma that we prove in Section~\ref{Sec:DecompLemma}. In a subsequent paper \cite{DefantEnumeration}, we apply this lemma in order to settle several conjectures of the current author from \cite{DefantClass}. More precisely, we complete the project of determining $|s^{-1}(\Av_n(\tau^{(1)},\ldots,\tau^{(r)}))|$ for every subset $\{\tau^{(1)},\ldots,\tau^{(r)}\}\subseteq S_3$ with the exception of the singleton set $\{321\}$. This allows us to enumerate a new permutation class, find a new example of an unbalanced Wilf equivalence, and prove a conjecture of Hossain concerning the so-called ``Boolean-Catalan numbers." Hence, one can even view the decomposition lemma as a bridge that allows one to use the stack-sorting map $s$ as a tool for proving results that were conjectured without any reference to stack-sorting. 

\section{The Decomposition Lemma}\label{Sec:DecompLemma}

West \cite{West} defined the \emph{fertility} of a permutation $\pi$ to be $|s^{-1}(\pi)|$, the number of preimages of $\pi$ under $s$. He then performed extensive calculations in order to compute the fertilities of the permutations of the forms \[23\cdots k1(k+1)\cdots n,\quad 12\cdots(k-2)k(k-1)(k+1)\cdots n,\quad\text{and}\quad k12\cdots(k-1)(k+1)\cdots n.\] Bousquet-M\'elou \cite{Bousquet} found a method for determining whether or not a given permutation is \emph{sorted}, meaning that its fertility is positive. She then asked for a general method for computing the fertility of any given permutation. The current author achieved this in even greater generality in \cite{DefantPostorder, DefantPreimages, DefantClass} using new combinatorial objects called ``valid hook configurations." In this section, we prove the refined decomposition lemma and the decomposition lemma, which provide a new method for analyzing fertilities of permutations.  

The \emph{plot} of a permutation $\pi=\pi_1\cdots\pi_n$ is the figure showing the points $(i,\pi_i)$ for all $i\in[n]$. For example, the image on the left in Figure~\ref{Fig2} is the plot of $3142567$. A \emph{hook} of $\pi$ is obtained by starting at a point $(i,\pi_i)$ in the plot of $\pi$, drawing a vertical line segment moving upward, and then drawing a horizontal line segment to the right that connects with a point $(j,\pi_j)$. In order for this to make sense, we must have $i<j$ and $\pi_i<\pi_j$. The point $(i,\pi_i)$ is called the \emph{southwest endpoint} of the hook, while $(j,\pi_j)$ is called the \emph{northeast endpoint}. Let $\SW_i(\pi)$ be the set of hooks of $\pi$ with southwest endpoint $(i,\pi_i)$. The right image in Figure~\ref{Fig2} shows a hook of $3142567$. This hook is in $\SW_3(3142567)$ because its southwest endpoint is $(3,4)$. 

\begin{figure}[h]
  \centering
  \subfloat[]{\includegraphics[width=0.17\textwidth]{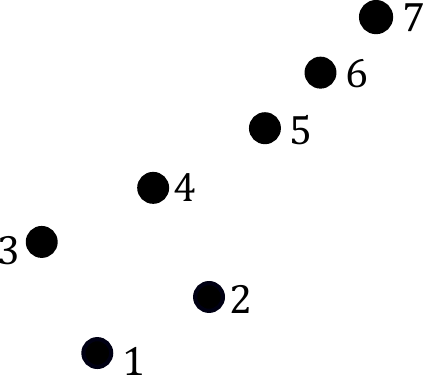}}
  \hspace{1.5cm}
  \subfloat[]{\includegraphics[width=0.17\textwidth]{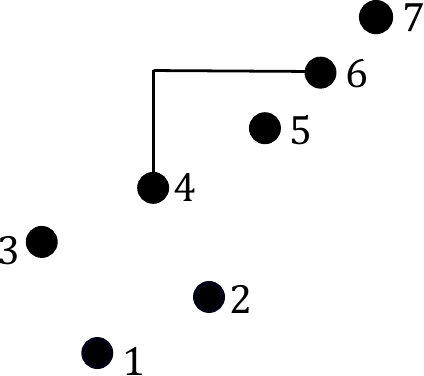}}
  \caption{The left image is the plot of $3142567$. The right image shows this plot along with a single hook.}\label{Fig2}
\end{figure}

Define the \emph{tail length} of a permutation $\pi=\pi_1\cdots\pi_n\in S_n$, denoted $\tl(\pi)$, to be the smallest
nonnegative integer $\ell$ such that $\pi_{n-\ell}\neq n-\ell$. We make the convention that $\tl(1\cdots n)=n$. The \emph{tail} of $\pi$ is the sequence of points $(n-\tl(\pi)+1,n-\tl(\pi)+1),\ldots,(n,n)$ in the plot of $\pi$. For example, the tail length of the permutation $3142567$ shown in Figure~\ref{Fig2} is $3$, and the tail of this permutation is $(5,5),(6,6),(7,7)$. We say a descent $d$ of $\pi$ is \emph{tail-bound} if every hook in $\SW_d(\pi)$ has its northeast endpoint in the tail of $\pi$. The only tail-bound descent of $3142567$ is $3$.  

Suppose $H$ is a hook of a permutation $\pi=\pi_1\cdots\pi_n$ with southwest endpoint $(i,\pi_i)$ and northeast endpoint $(j,\pi_j)$. Let $\pi_U^H=\pi_1\cdots\pi_i\pi_{j+1}\cdots\pi_n$ and $\pi_S^H=\pi_{i+1}\cdots\pi_{j-1}$. The permutations $\pi_U^H$ and $\pi_S^H$ are called the \emph{$H$-unsheltered subpermutation of $\pi$} and the \emph{$H$-sheltered subpermutation of $\pi$}, respectively. For example, if $\pi=3142567$ and $H$ is the hook shown on the right in Figure~\ref{Fig2}, then $\pi_U^H=3147$ and $\pi_S^H=25$. In all of the cases we consider in this paper, the plot of $\pi_S^H$ lies completely below the hook $H$ in the plot of $\pi$ (it is ``sheltered" by the hook $H$). 

\begin{lemma}[Refined Decomposition Lemma]\label{Lem1}
If $d$ is a tail-bound descent of a permutation $\pi\in S_n$, then \[\sum_{\sigma\in s^{-1}(\pi)}x^{\des(\sigma)+1}y^{\peak(\sigma)+1}\] \[=\sum_{H\in\SW_d(\pi)}\left(\sum_{\mu\in s^{-1}(\pi_U^H)}x^{\des(\mu)+1}y^{\peak(\mu)+1}\right)\left(\sum_{\lambda\in s^{-1}(\pi_S^H)}x^{\des(\lambda)+1}y^{\peak(\lambda)+1}\right).\]
\end{lemma}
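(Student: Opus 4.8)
The plan is to set up an explicit bijection between $s^{-1}(\pi)$ and the disjoint union over $H\in\SW_d(\pi)$ of $s^{-1}(\pi_U^H)\times s^{-1}(\pi_S^H)$, and then check that this bijection interacts with $\des$ and $\peak$ in the way demanded by the stated identity. To build the bijection, I would first recall the standard recursive description of the stack-sorting map: if $\sigma=LnR$ where $n$ is the largest entry, then $s(\sigma)=s(L)s(R)n$. This means that a preimage $\sigma\in s^{-1}(\pi)$ is determined by choosing where the largest entry of $\sigma$ sits and splitting $\sigma$ into a left part and a right part whose images, concatenated with $n$ appended, give $\pi$. Iterating this ``decomposition by largest entries'' on $\pi$ exactly produces, for each candidate preimage, a partition of the one-line notation of $\pi$ into blocks; the hooks of $\pi$ are the combinatorial bookkeeping device that records which block structures can actually arise. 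The hypothesis that $d$ is a \emph{tail-bound} descent is what guarantees that every relevant block decomposition ``goes through'' the hooks in $\SW_d(\pi)$: because the northeast endpoint of any hook at $d$ lands in the tail, the entries $\pi_{n-\tl(\pi)+1},\dots,\pi_n=n$ sit in increasing fixed-point position, so in any preimage the entry just to the left of position $d+1$ must be ``large enough'' to force the split, and the northeast endpoint of the hook is pinned down.

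The key steps, in order, would be: (1) Fix $H\in\SW_d(\pi)$ with endpoints $(i,\pi_i)$ and $(j,\pi_j)$; show that any preimage $\sigma\in s^{-1}(\pi)$ induces, via the top-level ``$LnR$'' decomposition repeated down to the entry $\pi_d$, a well-defined hook $H\in\SW_d(\pi)$, so that $s^{-1}(\pi)=\bigsqcup_{H\in\SW_d(\pi)}\Sigma_H$ where $\Sigma_H$ is the set of preimages associated to $H$. (2) Show that $\sigma\in\Sigma_H$ decomposes uniquely as an ``interleaving'' of a preimage $\mu$ of $\pi_U^H=\pi_1\cdots\pi_i\pi_{j+1}\cdots\pi_n$ and a preimage $\lambda$ of $\pi_S^H=\pi_{i+1}\cdots\pi_{j-1}$; concretely, the entries of $\sigma$ that get output ``after'' the relevant large entry form $\lambda$ and the rest form $\mu$. (3) Verify this correspondence is a bijection $\Sigma_H\leftrightarrow s^{-1}(\pi_U^H)\times s^{-1}(\pi_S^H)$. (4) Track the statistics: show that $\des(\sigma)+1=(\des(\mu)+1)+(\des(\lambda)+1)$ and $\peak(\sigma)+1=(\peak(\mu)+1)+(\peak(\lambda)+1)$, so that $x^{\des(\sigma)+1}y^{\peak(\sigma)+1}$ factors as the product of the corresponding monomials for $\mu$ and $\lambda$. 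Summing over $\Sigma_H$ and then over $H$ yields the identity.

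I expect step (4), the descent/peak bookkeeping, to be the technical heart, and step (1)–(2), pinning down the block structure, to be the main conceptual obstacle. For the statistics: when we glue $\mu$ and $\lambda$ together to form $\sigma$, the one-line notation of $\sigma$ is essentially $\mu$ with $\lambda$ inserted into a single ``slot'' (the place corresponding to the sheltered region), possibly flanked by a large entry on one side. One then checks that this insertion creates exactly one new descent beyond $\des(\mu)+\des(\lambda)$ — coming from the junction between the large entry and the top of $\lambda$, or equivalently from the boundary where $\lambda$'s block meets the surrounding structure — and, crucially, exactly one new peak beyond $\peak(\mu)+\peak(\lambda)$; the ``$+1$'' shifts in the exponents are precisely designed to absorb these boundary contributions multiplicatively. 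The tail-bound hypothesis again matters here: it forces the entries near the junction to be comparatively large (they are tail entries of $\pi_U^H$), which fixes the local up/down pattern at the splice point and makes the descent and peak counts add up cleanly rather than depending on the particular $\mu$ and $\lambda$. I would handle this by a careful case-free local analysis of the three or four positions surrounding the inserted block, rather than splitting into many subcases.

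Finally, I would note that the unrefined decomposition lemma (and hence fertility count) follows by setting $y=1$ and dividing by $x$, but I would prove the refined version directly since the extra structure (keeping $\peak$) does not make the bijection any harder — only the statistic-tracking in step (4) carries the additional $y$-bookkeeping, which runs in parallel with the $x$-bookkeeping.
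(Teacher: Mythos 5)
Your proposal follows essentially the same route as the paper's proof: it partitions $s^{-1}(\pi)$ according to which tail entry (equivalently, which hook in $\SW_d(\pi)$) forces $\pi_d$ out of the stack, decomposes each preimage as a preimage $\mu$ of $\pi_U^H$ with a block (the northeast-endpoint entry followed by a preimage $\lambda$ of $\pi_S^H$) inserted at a forced position, and tracks the statistics via exactly the identities $\des(\sigma)+1=(\des(\mu)+1)+(\des(\lambda)+1)$ and $\peak(\sigma)+1=(\peak(\mu)+1)+(\peak(\lambda)+1)$. The details you defer (the canonical split of $\mu$ used to invert the map, and the local check that the entry following the inserted block is a peak of $\sigma$ exactly when it is a peak of $\mu$) are handled in the paper just as you describe.
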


\begin{proof}
If the tail of $\pi$ is empty, then both sides of the desired equation are $0$ because $s^{-1}(\pi)$ and $\SW_d(\pi)$ are empty. Hence, we may assume $\tl(\pi)\geq 1$. Let $a=\pi_d$. Given $\sigma\in s^{-1}(\pi)$, we let $f_\sigma$ be the entry that forces $a$ to leave the stack when we apply the stack-sorting procedure (described in the introduction) to $\sigma$. More precisely, $f_\sigma$ is the leftmost entry that appears to the right of $a$ in $\sigma$ and is larger than $a$. Note that $f_\sigma$ appears to the right of $a$ in $\pi$. Because $d$ is tail-bound, this means that the point $(f_\sigma,f_\sigma)$ is in the tail of $\pi$. Given a point $(j,j)$ in the tail of $\pi$, let $E_j$ be the set of permutations $\sigma\in s^{-1}(\pi)$ such that $f_\sigma=j$.    

Now fix a point $(j,j)$ in the tail of $\pi$, and let $H$ be the hook in $\SW_d(\pi)$ with northeast endpoint $(j,j)$. We will show that \[\sum_{\sigma\in E_j}x^{\des(\sigma)+1}y^{\peak(\sigma)+1}=\left(\sum_{\mu\in s^{-1}(\pi_U^H)}x^{\des(\mu)+1}y^{\peak(\mu)+1}\right)\left(\sum_{\lambda\in s^{-1}(\pi_S^H)}x^{\des(\lambda)+1}y^{\peak(\lambda)+1}\right),\] from which the lemma will follow. We can write $\pi=L\,a\,\pi_S^H\,j\,R$, where $L=\pi_1\cdots\pi_{d-1}$ and $R=(j+1)\cdots n$. Suppose $\sigma\in E_j$. Let us write $\sigma=\tau\,j\,\tau'$. Because $j=f_\sigma$, it follows from the stack-sorting procedure that every entry in $\tau$ that is smaller than $a$ must appear to the left of $a$ in $s(\sigma)=\pi$. This implies that every entry in $\pi_S^H$ that is smaller than $a$ is in $\tau'$. In particular, $\pi_{d+1}$ is in $\tau'$ (we know that $a>\pi_{d+1}$ because $d$ is a descent of $\pi$). Now suppose $b$ is an entry in $\pi_S^H$ that is larger than $a$. If $b$ is in $\tau$, then we can appeal to the stack-sorting procedure again to see that $b$ must appear to the left of $\pi_{d+1}$ in $\pi$. This is impossible, so every entry in $\pi_S^H$ is in $\tau'$. The stack-sorting procedure forces every entry in $L$ to be in $\tau$, so every entry in $\tau'$ that is not in $\pi_S^H$ must be an entry in $R$. Furthermore, an entry in $R$ that is also in $\tau'$ cannot appear to the left of one of the entries from $\pi_S^H$ in $\tau'$ (otherwise, $j$ would appear to the right of one of the entries from $\pi_S^H$ in $\pi$). This proves that we can write $\tau'=\lambda\tau''$, where $\lambda$ is a permutation of the entries in $\pi_S^H$. Moreover, every entry in $\tau''$ is in $R$. 

Now let $\mu=\tau\tau''$. One can verify that $s(\mu)=\pi_U^H$ and $s(\lambda)=\pi_S^H$. Let $\delta=1$ if $1$ is a descent of $\tau''$, and let $\delta=0$ otherwise. Because $j=f_\sigma$, the leftmost entry in $\tau''$ is the leftmost entry in $\mu$ that appears to the right of $a$ in $\mu$ and is larger than $a$ (if no such entry exists, then $\tau''$ is empty). Also, the rightmost entry in $\tau$ is less than $j$. Combining these observations, we find that $\des(\sigma)+1=\des(\tau)+1+\des(\lambda)+\des(\tau'')+1=\des(\mu)+1+\des(\lambda)+1$ and $\peak(\sigma)+1=\peak(\tau)+1+\peak(\lambda)+\peak(\tau'')+\delta+1=\peak(\mu)+1+\peak(\lambda)+1$. 

We have shown how to take a permutation $\sigma\in E_j$ and decompose it into permutations $\mu\in s^{-1}(\pi_U^H)$ and $\lambda\in s^{-1}(\pi_S^H)$ with $\des(\sigma)+1=\des(\mu)+1+\des(\lambda)+1$ and $\peak(\sigma)+1=\peak(\mu)+1+\peak(\lambda)+1$. We can easily reverse this procedure. Namely, if we are given $\mu$ and $\lambda$, we can write $\mu=\tau\tau''$ so that the leftmost entry in $\tau''$ is the leftmost entry in $\mu$ that appears to the right of $a$ in $\mu$ and is larger than $a$. We then recover $\sigma$ by letting $\sigma=\tau\,j\,\lambda\,\tau''$.  
\end{proof}

\begin{corollary}[Decomposition Lemma]\label{Cor1}
If $d$ is a tail-bound descent of a permutation $\pi\in S_n$, then \[|s^{-1}(\pi)|=\sum_{H\in\SW_d(\pi)}|s^{-1}(\pi_U^H)|\cdot|s^{-1}(\pi_S^H)|.\]
\end{corollary}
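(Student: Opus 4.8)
The plan is to derive the Decomposition Lemma (Corollary~\ref{Cor1}) as an immediate specialization of the Refined Decomposition Lemma (Lemma~\ref{Lem1}), which is already proven in the excerpt. The key observation is that for any permutation $\rho$, setting $x=y=1$ in the polynomial $\sum_{\sigma\in s^{-1}(\rho)}x^{\des(\sigma)+1}y^{\peak(\sigma)+1}$ simply counts the preimages of $\rho$, i.e.\ evaluates to $|s^{-1}(\rho)|$. So the strategy is: start from the identity in Lemma~\ref{Lem1}, substitute $x=1$ and $y=1$ throughout, and read off the result.

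Concretely, I would argue as follows. Let $d$ be a tail-bound descent of $\pi\in S_n$. Apply Lemma~\ref{Lem1} to $\pi$ with this choice of $d$. The left-hand side becomes $\sum_{\sigma\in s^{-1}(\pi)}1^{\des(\sigma)+1}\,1^{\peak(\sigma)+1}=\sum_{\sigma\in s^{-1}(\pi)}1=|s^{-1}(\pi)|$. On the right-hand side, each inner sum $\sum_{\mu\in s^{-1}(\pi_U^H)}x^{\des(\mu)+1}y^{\peak(\mu)+1}$ evaluates at $x=y=1$ to $|s^{-1}(\pi_U^H)|$, and likewise $\sum_{\lambda\in s^{-1}(\pi_S^H)}x^{\des(\lambda)+1}y^{\peak(\lambda)+1}$ evaluates to $|s^{-1}(\pi_S^H)|$. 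Since these are honest polynomial identities (finite sums), the substitution is valid term by term, and the right-hand side becomes $\sum_{H\in\SW_d(\pi)}|s^{-1}(\pi_U^H)|\cdot|s^{-1}(\pi_S^H)|$. Equating the two sides gives exactly the claimed formula.

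There is essentially no obstacle here; the content of the statement lies entirely in Lemma~\ref{Lem1}. The only thing worth a sentence of care is that the equality in Lemma~\ref{Lem1} is an identity of polynomials in $\mathbb{Z}[x,y]$ (each of $s^{-1}(\pi)$, $s^{-1}(\pi_U^H)$, $s^{-1}(\pi_S^H)$, and $\SW_d(\pi)$ is finite, so every sum is a well-defined polynomial), and hence remains valid under the evaluation homomorphism $\mathbb{Z}[x,y]\to\mathbb{Z}$ sending $x\mapsto 1$, $y\mapsto 1$. One could alternatively phrase the whole thing without mentioning polynomials at all: simply note that replacing the monomials $x^{\des+1}y^{\peak+1}$ by the constant $1$ in every sum appearing in Lemma~\ref{Lem1} turns each sum into a cardinality, and the displayed equation of Lemma~\ref{Lem1} then directly yields the desired equation. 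Either way, the proof is one or two lines.
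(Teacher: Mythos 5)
Your proposal is correct and is exactly the paper's argument: the paper proves Corollary~\ref{Cor1} by setting $x=y=1$ in Lemma~\ref{Lem1}, precisely as you do. Your extra remarks on the validity of the substitution are fine but not needed.
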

\begin{proof}
Set $x=y=1$ in Lemma~\ref{Lem1}. 
\end{proof}

\section{Fertility Formulas}\label{Sec:Formulas}
The purpose of this brief section is to establish some terminology and state some formulas from \cite{DefantPostorder} that we will use in Section~\ref{Sec:Symmetry}. We will also use a very special consequence of Theorem~\ref{Thm10} in Section~\ref{Sec:2-Stack} when we analyze the generating function of the numbers $W_2(n,k,p)$. 

A \emph{composition of $b$ into $a$ parts} is an $a$-tuple of positive integers that sum to $b$. For example, $(3,4,3,1)$ is a composition of $11$ into $4$ parts. Let $\Comp_a(b)$ denote the set of compositions of $b$ into $a$ parts. Let $C_r=\frac{1}{r+1}{2r\choose r}$ denote the $r^\text{th}$ Catalan number. Let 
\begin{equation}\label{Eq33}
N(r,i)=\frac 1r{r\choose i}{r\choose i-1}\quad \text{and}\quad V(r,j)=2^{r-2j+1}{r-1\choose 2j-2}C_{j-1}.
\end{equation}
Let 
\begin{equation}\label{Eq34}
N_r(x)=\sum_{i=1}^rN(r,i)x^i\quad\text{and}\quad V_r(y)=\sum_{j=1}^rV(r,j)y^j.
\end{equation} The numbers $N(r,i)$ are called \emph{Narayana numbers}. They are given in the OEIS sequence A001263 and constitute the most common refinement of the Catalan numbers \cite{OEIS}. The polynomials $N_r(x)$ are called \emph{Narayana polynomials}. Among many other things, the Narayana numbers $N(r,i)$ count binary plane trees with $r$ vertices and $i-1$ right edges. The numbers $V(r,j)$, which count binary plane trees with $r$ vertices and $j$ leaves, are given in the OEIS sequence A091894. Let $L(r,i,j)$ be the number of binary plane trees with $r$ vertices, $i-1$ right edges, and $j$ leaves. Letting $F(w,x,y)=\sum_{r,i,j\geq 0}L(r,i,j)w^rx^iy^j$, we have 
\begin{equation}\label{Eq15}
F(w,x,y)=x + wxy + w(F(w,x,y)+1)(F(w,x,y)-x).
\end{equation} This yields \[F(w,x,y)=\frac{1-w+wx-\sqrt{(1-w+wx)^2-4wx(1-w+wy)}}{2w},\] from which one obtains 
\begin{equation}\label{Eq36}
L(r,i,j)=\frac{1}{r+1-j}{r-1\choose r-j}{r+1-j\choose j}{r+1-2j\choose i-j}.
\end{equation}
Let 
\begin{equation}\label{Eq19}
L_r(x,y)=\sum_{i=1}^r\sum_{j=1}^rL(r,i,j)x^iy^j
\end{equation} so that \[L_r(x,1)=N_r(x)\quad\text{and}\quad L_r(1,y)=V_r(y).\] 

\begin{theorem}[\!\!\cite{DefantPostorder}\footnote{Strictly speaking, the first statement in Theorem~\ref{Thm10} has not been stated explicitly before. However, the proofs of Corollary 5.1 and Theorem 5.2 in \cite{DefantPostorder} immediately generalize to yield that statement.}]\label{Thm10} 
If $n\geq 1$ and $\pi=\pi_1\cdots\pi_n$ has exactly $k$ descents, then there exists a set $\mathcal V(\pi)\subseteq\Comp_{k+1}(n-k)$ such that 
\begin{equation}\label{Eq11}
\sum_{\sigma\in s^{-1}(\pi)}x^{\des(\sigma)+1}y^{\peak(\sigma)+1}=\sum_{(q_0,\ldots,q_k)\in\mathcal V(\pi)}\prod_{t=0}^kL_{q_t}(x,y).
\end{equation} In particular, 
\begin{equation}\label{Eq12}
\sum_{\sigma\in s^{-1}(\pi)}x^{\des(\sigma)+1}=\sum_{(q_0,\ldots,q_k)\in\mathcal V(\pi)}\prod_{t=0}^kN_{q_t}(x)
\end{equation}
and 
\begin{equation}\label{Eq13}\sum_{\sigma\in s^{-1}(\pi)}y^{\peak(\sigma)+1}=\sum_{(q_0,\ldots,q_k)\in\mathcal V(\pi)}\prod_{t=0}^kV_{q_t}(y).
\end{equation} Thus, 
\begin{equation}\label{Eq14}
|s^{-1}(\pi)|=\sum_{(q_0,\ldots,q_k)\in\mathcal V(\pi)}\prod_{t=0}^kC_{q_t}.
\end{equation}
\end{theorem}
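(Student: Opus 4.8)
The plan is to reduce the proposition to its first assertion \eqref{Eq11}: equations \eqref{Eq12}, \eqref{Eq13}, and \eqref{Eq14} follow from it by setting $y=1$, then $x=1$, then $x=y=1$, using the stated identities $L_r(x,1)=N_r(x)$, $L_r(1,y)=V_r(y)$ together with $L_r(1,1)=N_r(1)=C_r$. I would then prove \eqref{Eq11} by strong induction on $n$, carrying out the induction step with the Refined Decomposition Lemma (Lemma~\ref{Lem1}). This keeps the argument inside the present paper; as the footnote notes, one can alternatively rerun the valid hook configuration computations of \cite{DefantPostorder} while carrying along $\peak$ in addition to $\des$.

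\emph{Base case.} If $\pi=12\cdots n$ then $k=0$, $\Comp_{1}(n)=\{(n)\}$, and $s^{-1}(\pi)=\mathcal W_1(n)=\Av_n(231)$ by Theorem~\ref{Thm1}; so one takes $\mathcal V(12\cdots n)=\{(n)\}$ and must verify $\sum_{\sigma\in\Av_n(231)}x^{\des(\sigma)+1}y^{\peak(\sigma)+1}=L_n(x,y)$. I would use the standard recursive bijection $\sigma\mapsto T(\sigma)$ from $\Av_n(231)$ onto binary plane trees with $n$ vertices: writing $\sigma=\sigma'\,n\,\sigma''$, avoidance of $231$ forces $\sigma'$ and $\sigma''$ to be $231$-avoiding permutations of the sets of values lying below and above the position of $n$, and one lets $T(\sigma')$ and $T(\sigma'')$ be the left and right subtrees of $T(\sigma)$. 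A short induction, splitting into the four cases according to whether $\sigma'$ and $\sigma''$ are empty, shows that $\des(\sigma)$ equals the number of right edges of $T(\sigma)$ and $\peak(\sigma)+1$ equals the number of leaves of $T(\sigma)$; hence the sum equals $\sum_{i,j}L(n,i,j)x^iy^j=L_n(x,y)$ by the definitions \eqref{Eq36} and \eqref{Eq19}.

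\emph{Induction step.} Suppose $\pi\in S_n$ is not the identity, so $k=\des(\pi)\geq 1$ and $\ell:=\tl(\pi)<n$. I would first exhibit a tail-bound descent: let $p$ be the position of the value $n-\ell$. Since $\pi_{n-\ell}\neq n-\ell$ while positions $n-\ell+1,\dots,n$ hold the values $n-\ell+1,\dots,n$, we have $p<n-\ell$, so $\pi_p=n-\ell>\pi_{p+1}$ (a smaller value), which makes $p$ a descent; and every hook in $\SW_p(\pi)$ has northeast endpoint of value $>n-\ell$, hence lying in the tail, so $p$ is tail-bound. Apply Lemma~\ref{Lem1} with $d=p$. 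For $H\in\SW_p(\pi)$ with northeast endpoint $(j,j)$ in the tail we have $j\geq p+2$ (a hook cannot join a descent top to the following point), so $\pi_U^H=\pi_1\cdots\pi_p\,\pi_{j+1}\cdots\pi_n$ and $\pi_S^H=\pi_{p+1}\cdots\pi_{j-1}$ are nonempty permutations of length $<n$, and the inductive hypothesis applies to their normalizations. A direct descent count gives $\des(\pi_U^H)+\des(\pi_S^H)=k-1$ for every such $H$ (the descent $p$ of $\pi$ is destroyed in $\pi_U^H$ because $\pi_p<j+1=\pi_{j+1}$, and no descents are created), while $|\pi_U^H|+|\pi_S^H|=n-1$. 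I would then define $\mathcal V(\pi)$ to be the multiset of all concatenations $(q'_0,\dots,q'_{\des(\pi_U^H)},q''_0,\dots,q''_{\des(\pi_S^H)})$ with $(q'_\bullet)\in\mathcal V(\pi_U^H)$, $(q''_\bullet)\in\mathcal V(\pi_S^H)$, and $H$ ranging over $\SW_p(\pi)$; the two displayed identities guarantee that each such tuple has $k+1$ parts and sums to $n-k$, so $\mathcal V(\pi)\subseteq\Comp_{k+1}(n-k)$. Substituting the inductive expressions for the two inner sums of Lemma~\ref{Lem1} and expanding the product then yields precisely $\sum_{(q_0,\dots,q_k)\in\mathcal V(\pi)}\prod_{t=0}^kL_{q_t}(x,y)$, completing the induction.

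The point requiring care --- where essentially all the combinatorial content sits --- is the descent bookkeeping in the induction step: one must check that cutting $\pi$ along any hook with a tail northeast endpoint partitions the descents of $\pi$ as $\des(\pi_U^H)+\des(\pi_S^H)+1$, independently of the hook, so that the resulting pieces have exactly the right sizes to reassemble into compositions of $n-k$ into $k+1$ parts; and the companion base-case fact that the joint $(\des,\peak)$ distribution on $\Av_n(231)$ is recorded by $L_n(x,y)$. Both of these are short inductions on length, and the structural heavy lifting --- writing a preimage of $\pi$ as a preimage of $\pi_U^H$ glued to a preimage of $\pi_S^H$ with the statistics adding up term by term --- has already been done in Lemma~\ref{Lem1}.
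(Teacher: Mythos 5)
Your proof is correct, but it follows a genuinely different route from the paper, which does not prove Theorem~\ref{Thm10} internally at all: the theorem is quoted from \cite{DefantPostorder}, where it is established via valid hook configurations, the footnote merely observing that the proofs of Corollary 5.1 and Theorem 5.2 there generalize to the refined identity \eqref{Eq11}. You instead give a self-contained induction on length driven by Lemma~\ref{Lem1}: the identity permutation is treated through the classical bijection between $\Av_n(231)$ and binary plane trees, checking that $\des$ and $\peak+1$ correspond to right edges and leaves (this is exactly Remark~\ref{Rem1}, which the paper deduces \emph{from} Theorem~\ref{Thm10}; proving it directly, as you do, is what keeps your argument non-circular), and for non-identity $\pi$ you cut at the position $p$ of the value $n-\tl(\pi)$, which is indeed a tail-bound descent --- the same cutting move the paper itself uses in Sections~\ref{Sec:2-Stack} and \ref{Sec:3-Stack}. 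Your bookkeeping $\des(\pi_U^H)+\des(\pi_S^H)=k-1$ and $|\pi_U^H|+|\pi_S^H|=n-1$ checks out (including the degenerate case $\tl(\pi)=0$, where $\SW_p(\pi)=\emptyset$ and one takes $\mathcal V(\pi)=\emptyset$), so the concatenated tuples do lie in $\Comp_{k+1}(n-k)$ and Lemma~\ref{Lem1} reassembles \eqref{Eq11}; the specializations \eqref{Eq12}--\eqref{Eq14} then follow as you say. What each route buys: yours makes Section~\ref{Sec:Formulas} independent of the valid hook configuration machinery (morally, iterating Lemma~\ref{Lem1} is how valid hook configurations arise anyway), whereas the cited proof supplies an explicit combinatorial description of $\mathcal V(\pi)$, which the paper actually uses in Section~\ref{Sec:Symmetry} to compute $\mathcal V(\mu)$ and $\mathcal V(\mu')$; your construction gives only existence together with a recursive means of computation. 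Two small points to tighten: you call $\mathcal V(\pi)$ a multiset while the statement asserts a set --- in fact no repetitions occur, since $\des(\pi_U^H)=\des(\pi_1\cdots\pi_p)$ is the same for every hook, so the split point of each concatenated tuple is fixed and the sum of its first block, $|\pi_U^H|-\des(\pi_U^H)$, recovers the hook's northeast endpoint (alternatively, the multiset version suffices for every use made of the theorem); and the justification ``$\pi_p<j+1=\pi_{j+1}$'' should be phrased to cover the hook with northeast endpoint $(n,n)$, where $\pi_{j+1}$ does not exist but the conclusion is immediate.
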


\begin{remark}\label{Rem1}
If $\pi=123\cdots n$, then the above theorem, along with Theorem~\ref{Thm1}, tells us that \[\sum_{\sigma\in s^{-1}(123\cdots n)}x^{\des(\sigma)+1}y^{\peak(\sigma)+1}=\sum_{\sigma\in\Av_n(231)}x^{\des(\sigma)+1}y^{\peak(\sigma)+1}=L_n(x,y).\]
\end{remark}

\section{A New Proof of the Formula for $W_2(n)$}\label{Sec:2-Stack} 

Recall from Section~\ref{Sec:DecompLemma} the definition of the tail length $\tl(\pi)$ of a permutation $\pi$. Let $B_\ell(n)$ (respectively, $B_{\geq\ell}(n)$) be the number of $2$-stack-sortable permutations $\sigma\in \mathcal W_2(n+\ell)$ such that $\tl(s(\sigma))=\ell$ (respectively, $\tl(s(\sigma))\geq\ell$). Let \[\mathcal D_\ell(n)=\{\pi\in\Av_{n+\ell}(231):\tl(\pi)=\ell\}\quad\text{and}\quad\mathcal D_{\geq\ell}(n)=\{\pi\in\Av_{n+\ell}(231):\tl(\pi)\geq\ell\}.\] Because $\mathcal W_2(n)=s^{-1}(\mathcal W_1(n))=s^{-1}(\Av_{n}(231))$ by Theorem~\ref{Thm1}, we can write \[B_\ell(n)=|s^{-1}(\mathcal D_\ell(n))|\quad\text{and}\quad B_{\geq \ell}(n)=|s^{-1}(\mathcal D_{\geq \ell}(n))|.\] 

Suppose $\pi\in\mathcal D_\ell(n+1)$ is such that $\pi_{n+1-i}=n+1$ (where $n\geq 0$). Then $n+1-i$ is a tail-bound descent of $\pi$. 
The decomposition lemma (Corollary~\ref{Cor1}) tells us that $|s^{-1}(\pi)|$ is equal to the number of triples $(H,\mu,\lambda)$, where $H\in\SW_{n+1-i}(\pi)$, $\mu\in s^{-1}(\pi_U^H)$, and $\lambda\in s^{-1}(\pi_S^H)$. Choosing $H$ amounts to choosing the number $j\in\{1,\ldots,\ell\}$ such that the northeast endpoint of $H$ is $(n+1+j,n+1+j)$. The permutation $\pi$ and the choice of $H$ determine the permutations $\pi_U^H$ and $\pi_S^H$. On the other hand, the choices of $H$ and the permutations $\pi_U^H$ and $\pi_S^H$ uniquely determine $\pi$. It follows that $B_\ell(n+1)$, which is the number of ways to choose an element of $s^{-1}(\mathcal D_\ell(n+1))$, is also the number of ways to choose the tuple $(j,\pi_U^H,\pi_S^H,\mu,\lambda)$. Let us fix a choice of $j$. 

Because $\pi$ avoids $231$, $\pi_U^H$ must be a permutation of the set $\{1,\ldots,n-i\}\cup\{n+1\}\cup\linebreak\{n+2+j,\ldots,n+\ell+1\}$, while $\pi_S^H$ must be a permutation of $\{n-i+1,\ldots,n+j\}\setminus\{n+1\}$. Therefore, choosing $\pi_U^H$ and $\pi_S^H$ is equivalent to choosing their normalizations. The normalization of $\pi_U^H$ is in $\mathcal D_{\geq \ell-j+1}(n-i)$, while the normalization of $\pi_S^H$ is in $\mathcal D_{\geq j-1}(i)$ (see Figure~\ref{Fig1}). Any element of $\mathcal D_{\geq \ell-j+1}(n-i)$ can be chosen as the normalization of $\pi_U^H$, and any element of $\mathcal D_{\geq j-1}(i)$ can be chosen as the normalization of $\pi_S^H$. Also, $\pi_U^H$ and $\pi_S^H$ have the same fertilities as their normalizations. Combining these facts, we find that the number of choices for the pair $(\pi_U^H,\mu)$ is $|s^{-1}(\mathcal D_{\geq \ell-j+1}(n-i))|=B_{\geq \ell-j+1}(n-i)$. Similarly, the number of choices for the pair $(\pi_S^H,\lambda)$ is $B_{\geq j-1}(i)$. Hence, 
\begin{equation}\label{Eq3}
B_\ell(n+1)=\sum_{i=1}^n\sum_{j=1}^\ell B_{\geq \ell-j+1}(n-i)B_{\geq j-1}(i).
\end{equation}

\begin{figure}[h]
\begin{center}
\includegraphics[width=.4\linewidth]{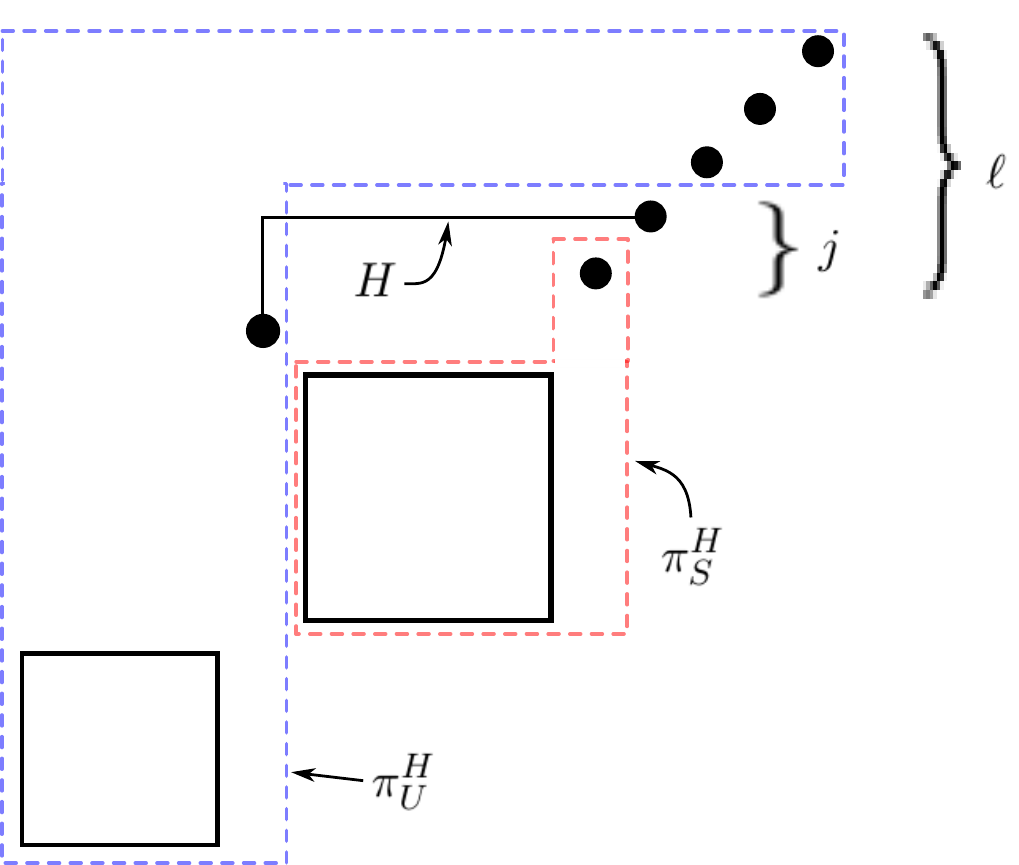}
\caption{The decomposition of $\pi$ into $\pi_U^H$ and $\pi_S^H$.}
\label{Fig1}
\end{center}  
\end{figure}

Let \[G_\ell(w)=\sum_{n\geq 0}B_{\geq\ell}(n)w^n\quad\text{and}\quad I(w,z)=\sum_{\ell\geq 0}G_\ell(w)z^\ell.\] Note that \[G_\ell(0)=B_{\geq \ell}(0)=|s^{-1}(\mathcal D_{\geq \ell}(0))|=|s^{-1}(123\cdots\ell)|=C_\ell\] by Theorem~\ref{Thm1}. 
Let $C(z)=\sum_{n\geq 0}C_nz^n=\dfrac{1-\sqrt{1-4z}}{2z}$ be the generating function of the Catalan numbers. Because $B_{\geq 0}(n)=W_2(n)$ is the total number of $2$-stack-sortable permutations in $S_n$, our goal is to understand the generating function \[I(w,0)=G_0(w)=\sum_{n\geq 0}B_{\geq 0}(n)w^n=\sum_{n\geq 0}W_2(n)w^n.\] 

By \eqref{Eq3}, we have \[\sum_{\ell\geq 0}\sum_{n\geq 0}B_\ell(n+1)w^nz^\ell=\sum_{\ell\geq 0}\sum_{j=1}^\ell \sum_{n\geq 0}\sum_{i=1}^n B_{\geq \ell-j+1}(n-i)B_{\geq j-1}(i)w^nz^\ell\] \[=\sum_{\ell\geq 0}\sum_{j=1}^\ell G_{\ell-j+1}(w)(G_{j-1}(w)-G_{j-1}(0))z^\ell=\sum_{\ell\geq 0}\sum_{j=1}^\ell G_{\ell-j+1}(w)(G_{j-1}(w)-C_{j-1})z^\ell\] 
\begin{equation}\label{Eq4}
=\left(\sum_{r\geq 0}G_{r+1}(w)z^r\right)\left(\sum_{j\geq 1}(G_{j-1}(w)-C_{j-1})z^j\right)=(I(w,z)-I(w,0))(I(w,z)-C(z)).
\end{equation}

On the other hand, \[B_\ell(n+1)=B_{\geq\ell}(n+1)-B_{\geq \ell+1}(n),\] so \[\sum_{\ell\geq 0}\sum_{n\geq 0}B_\ell(n+1)w^nz^\ell=\sum_{\ell\geq 0}\sum_{n\geq 0}B_{\geq\ell}(n+1)w^nz^\ell-\sum_{\ell\geq 0}\sum_{n\geq 0}B_{\geq \ell+1}(n)w^nz^\ell\]
\begin{equation}\label{Eq5}
=\frac{1}{w}\sum_{\ell\geq 0}(G_\ell(w)-C_\ell)z^\ell-\frac{1}{z}\sum_{\ell\geq 0}G_{\ell+1}(w)z^{\ell+1}=\frac{I(w,z)-C(z)}{w}-\frac{I(w,z)-I(w,0)}{z}.
\end{equation} 
Combining \eqref{Eq4} and \eqref{Eq5} yields the equation 
\begin{equation}\label{Eq6}
(I(w,z)-I(w,0))(I(w,z)-C(z))=\frac{I(w,z)-C(z)}{w}-\frac{I(w,z)-I(w,0)}{z}.
\end{equation}
We can now solve \eqref{Eq6} for $C(z)$, use the standard Catalan functional equation $zC(z)^2+1-C(z)=0$, and clear denominators to obtain a polynomial \[\begin{split}
Q(u,v,w,z)=\: &-v w + z + 2 v w z + 
   v^2 w^2 z + (w - z - 2 w z - 2 v w^2 z + v^2 w^2 z) u  \\
 &+ (w^2 z - 
    2 v w^2 z + z^2 + 2 v w z^2 + v^2 w^2 z^2) u^2 + (w^2 z - 
    2 w z^2 - 2 v w^2 z^2) u^3 + w^2 z^2 u^4
\end{split}
\] such that 
\begin{equation}\label{Eq37}
Q(I(w,z),I(w,0),w,z)=0.
\end{equation} 

Let $Q_u'=\dfrac{\partial}{\partial u}Q(u,v,w,z)$. There is a unique fractional power series (Puiseux series) $Z=Z(w)$ such that $Z(w)=w+O(w^2)$ and 
\begin{equation}\label{Eq7}
Q_u'(I(w,Z),I(w,0),w,Z)=0.
\end{equation} Indeed, we can compute the coefficients of $Z(w)$ one at a time from the equation \eqref{Eq7} after we have initially computed sufficiently many terms of $I(w,z)$ via its combinatorial definition. Let $\Delta_uQ(v,w,z)$ be the discriminant of $Q(u,v,w,z)$ with respect to the variable $u$. A computer can explicitly compute this discriminant as $\Delta_uQ(v,w,z)=w^6 (1 - 4 z)^2 z^3 \widehat Q(v,w,z)$, where \[\begin{split}
\widehat Q(v,w,z)=\: &z^3 + 2 w z^2 (-3 + 2 v z) + w^4 z (1 + v + v^2 z)^2 + 
 w^2 z (9 + (2 - 10 v) z + 6 v^2 z^2) \\
 &+ 
 2 w^3 (-2 + (5 - 3 v) z - (-2 + v) v z^2 + 2 v^3 z^3).
\end{split}
\] At this point, we use Theorem 14\footnote{In the notation of \cite{Bousquet3}, we are applying Theorem 14 with $k=1$. Our polynomial $Q(u,v,w,z)$, power series $I(w,z)$, and power series $I(w,0)$ are playing the roles of $P(x_0,\ldots,x_k,t,v)$, $F(t,u)$, and $F_1(t)$, respectively, from that article.} from the paper \cite{Bousquet3}, which allows us to deduce from \eqref{Eq37} and \eqref{Eq7} that $z=Z(w)$ is a repeated root of $\Delta_uQ(I(w,0),w,z)$. Since $Z(w)=w+O(w^2)$, we know that $w^6(1-4Z)^2Z^3\neq 0$. Therefore, $z=Z(w)$ is a repeated root of $\widehat Q(I(w,0),w,z)$. The discriminant of a polynomial with a repeated root must be $0$. This means that $\Delta_z\widehat Q(I(w,0),w)=0$, where $\Delta_z\widehat Q(v,w)$ is the discriminant of $\widehat Q(v,w,z)$ with respect to $z$. Computing $\Delta_z\widehat Q(v,w)$ explicitly and ignoring extraneous factors, we find that $\mathcal R(I(w,0),w)=0$, where \[\mathcal R(v,w)=-1 + 11 w + w^2 + v^3 w^2 + v^2 w (2 + 3 w) + v (1 - 14 w + 3 w^2).\] 

To complete our new proof of Theorem~\ref{Thm2}, we follow the proof of Proposition 5.2 in \cite{Bousquet}. Namely, we consider the power series $U(w)$ defined by $U(w)=w(1+U(w))^3$. We then verify that $\mathcal R(1+U(w)-U(w)^2,w)=0$ and deduce that $I(w,0)=1+U(w)-U(w)^2$. Lagrange inversion then completes the proof that \[I(w,0)=\sum_{n\geq 0}\frac{2}{(n+1)(2n+1)}{3n\choose n}w^n.\] 

The above argument generalizes as follows. Let \[I_{x,y}(w,z)=\sum_{\ell\geq 0}\sum_{n\geq 0}\sum_{\sigma\in s^{-1}(\mathcal D_{\geq\ell}(n))}x^{\des(\sigma)+1}y^{\peak(\sigma)+1}w^nz^\ell.\] Note that $I_{x,y}(w,0)=\sum_{n\geq 0}\sum_{\sigma\in\mathcal W_2(n)}w^nx^{\des(\sigma)+1}y^{\peak(\sigma)+1}$. We make the convention that the empty permutation has $0$ descents and $-1$ peaks so that $I_{x,y}(0,0)=x$. Let $F$ be the generating function in \eqref{Eq15}. If we replace $B_\ell(n)$ with $\sum_{\sigma\in s^{-1}(\mathcal D_\ell(n))}x^{\des(\sigma)+1}y^{\peak(\sigma)+1}$, replace $B_{\geq \ell}(n)$ with $\sum_{\sigma\in s^{-1}(\mathcal D_{\geq\ell}(n))}x^{\des(\sigma)+1}y^{\peak(\sigma)+1}$, use the refined decomposition lemma instead of the decomposition lemma, and use Remark~\ref{Rem1} instead of Theorem~\ref{Thm1}, then the above argument produces the equation 
\[(I_{x,y}(w,z)-I_{x,y}(w,0))(I_{x,y}(w,z)-F(z,x,y))=\frac{I_{x,y}(w,z)-F(z,x,y)}{w}-\frac{I_{x,y}(w,z)-I_{x,y}(w,0)}{z}
\]
in place of \eqref{Eq6}. We then continue the argument, using the functional equation \eqref{Eq15} instead of the Catalan functional equation for $C(z)$, in order to arrive at the following theorem concerning the generating function of the numbers $W_2(n,k,p)$. 

\begin{theorem}\label{Thm7}
The generating function \[I_{x,y}(w,0)=\sum_{n\geq 0}\sum_{\sigma\in\mathcal W_2(n)}w^nx^{\des(\sigma)+1}y^{\peak(\sigma)+1}\] for the numbers $W_2(n,k,p)$ satisfies the equation $R(I_{x,y}(w,0),w,x,y)=0$, where \[R(v,w,x,y)=-x + (4 x + 8 x^2 - x y) w + (-6 x - 16 x^2 - 16 x^3 + 3 x y + 
    36 x^2 y) w^2 + (4 x + 8 x^2 - 3 x y - 36 x^2 y \] \[+ 
    27 x^2 y^2) w^3 + (-x + x y) w^4+(1 + (-4 - 12 x) w + (6 + 20 x + 32 x^2 - 33 x y) w^2 + (-4 - 4 x + 
    16 x^2 + 30 x y \] \[- 36 x^2 y) w^3 + (1 - 4 x + 3 x y) w^4)v +(4 w + (-4 - 22 x) w^2 + (-4 - 20 x + 8 x^2 + 33 x y) w^3 + (4 - 6 x + 
    3 x y) w^4)v^2\] \[+(6 w^2 + (4 - 12 x) w^3 + (6 - 4 x + x y) w^4)v^3+(4 w^3 + (4 - x) w^4)v^4+w^4v^5.\]  
\end{theorem}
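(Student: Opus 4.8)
\emph{Proof proposal.} The plan is to run the argument of this section \emph{mutatis mutandis}, replacing each unrefined quantity by its $(x,y)$-weighted analogue. For $n,\ell\geq 0$ set
\[\widetilde B_\ell(n)=\sum_{\sigma\in s^{-1}(\mathcal D_\ell(n))}x^{\des(\sigma)+1}y^{\peak(\sigma)+1}\qquad\text{and}\qquad\widetilde B_{\geq\ell}(n)=\sum_{\sigma\in s^{-1}(\mathcal D_{\geq\ell}(n))}x^{\des(\sigma)+1}y^{\peak(\sigma)+1},\]
so that $\sum_{n,\ell\geq 0}\widetilde B_{\geq\ell}(n)w^nz^\ell=I_{x,y}(w,z)$ and, by Remark~\ref{Rem1}, $\widetilde B_{\geq\ell}(0)=\sum_{\sigma\in s^{-1}(123\cdots\ell)}x^{\des(\sigma)+1}y^{\peak(\sigma)+1}=L_\ell(x,y)$, where we use the convention $L_0(x,y)=x$ (consistent with $I_{x,y}(0,0)=x$). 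The crucial observation is that the bijective decomposition used to derive \eqref{Eq3} is exactly the one supplied by the refined decomposition lemma (Lemma~\ref{Lem1}), and under that lemma the relevant weights multiply: $x^{\des(\sigma)+1}y^{\peak(\sigma)+1}=\bigl(x^{\des(\mu)+1}y^{\peak(\mu)+1}\bigr)\bigl(x^{\des(\lambda)+1}y^{\peak(\lambda)+1}\bigr)$. Hence \eqref{Eq3} holds verbatim with every $B$ replaced by the corresponding $\widetilde B$; and since $\mathcal D_\ell(n)=\mathcal D_{\geq\ell}(n)\setminus\mathcal D_{\geq\ell+1}(n)$, the computations producing \eqref{Eq4} and \eqref{Eq5} go through unchanged, with $\sum_\ell L_\ell(x,y)z^\ell=F(z,x,y)$ playing the role of $C(z)=\sum_\ell C_\ell z^\ell$. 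Combining the two then yields
\[(I_{x,y}(w,z)-I_{x,y}(w,0))(I_{x,y}(w,z)-F(z,x,y))=\frac{I_{x,y}(w,z)-F(z,x,y)}{w}-\frac{I_{x,y}(w,z)-I_{x,y}(w,0)}{z},\]
the equation announced just above the theorem statement.

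Next I would eliminate $F$. Solving this displayed equation for $F(z,x,y)$ and substituting into the functional equation \eqref{Eq15} (with $w$ there specialized to $z$), then clearing denominators, produces a polynomial $Q(u,v,w,z,x,y)$ — polynomial in all six arguments — with $Q(I_{x,y}(w,z),I_{x,y}(w,0),w,z,x,y)=0$; this is the analogue of the polynomial $Q(u,v,w,z)$ and of \eqref{Eq37}. One then applies the kernel method exactly as above: there is a unique Puiseux series $Z=Z(w)$, now with coefficients in $\mathbb{Q}(x,y)$, satisfying $Z(w)=w+O(w^2)$ and $\frac{\partial}{\partial u}Q$ vanishing at $(I_{x,y}(w,Z),I_{x,y}(w,0),w,Z,x,y)$; its coefficients are computed one at a time once enough terms of $I_{x,y}(w,z)$ have been obtained from the combinatorial definition. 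Invoking Theorem 14 of \cite{Bousquet3} (with $k=1$ and with $Q$, $I_{x,y}(w,z)$, $I_{x,y}(w,0)$ in the roles of $P$, $F$, $F_1$) then shows that $z=Z(w)$ is a repeated root of the $u$-discriminant $\Delta_uQ(I_{x,y}(w,0),w,z,x,y)$. Discarding the explicitly computed factors of this discriminant that are nonzero at $z=Z(w)=w+O(w^2)$ leaves a smaller polynomial that still has $z=Z(w)$ as a repeated root; its $z$-discriminant therefore vanishes at $v=I_{x,y}(w,0)$, and after removing extraneous factors this relation is precisely $R(I_{x,y}(w,0),w,x,y)=0$ with $R$ as in the statement. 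Note that, unlike in the $W_2(n)$ argument, there is no need for a final Lagrange-inversion step to identify the series, since only the algebraic relation is claimed.

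The main obstacle is computational rather than conceptual. The iterated-discriminant elimination (first $\Delta_u$, then $\Delta_z$) involves polynomials in six variables whose expanded forms are unwieldy, so one must rely on a computer algebra system both to perform the eliminations and to decide which irreducible factors genuinely annihilate $I_{x,y}(w,0)$ and which are extraneous artifacts of clearing denominators and of the nonvanishing prefactors; in particular one must verify that the prefactors one discards are indeed nonzero at $z=Z(w)$ and that the surviving factor coincides with the stated $R$. One also has to check that the hypotheses of Theorem 14 of \cite{Bousquet3} hold in this two-parameter setting — chiefly that $I_{x,y}(w,z)$ is the unique power-series solution of $Q(u,I_{x,y}(w,0),w,z,x,y)=0$ of the appropriate shape and that $Z(w)$ is an honest Puiseux series — but these are routine consequences of the structure inherited from the $W_2(n)$ proof.
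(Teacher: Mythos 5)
Your proposal is correct and follows essentially the same route as the paper: it weights the Section~\ref{Sec:2-Stack} decomposition via the refined decomposition lemma (Lemma~\ref{Lem1}) and Remark~\ref{Rem1} to obtain the functional equation with $F(z,x,y)$ in place of $C(z)$, and then eliminates using \eqref{Eq15}, Theorem 14 of \cite{Bousquet3}, and iterated (computer-assisted) discriminants to reach $R(I_{x,y}(w,0),w,x,y)=0$. This is precisely the \emph{mutatis mutandis} generalization the paper carries out, including your correct observation that no final Lagrange-inversion step is needed because only the algebraic relation is claimed.
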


\section{$3$-Stack-Sortable Permutations}\label{Sec:3-Stack} 

In the previous section, we counted $2$-stack-sortable permutations by viewing them as preimages of $231$-avoiding permutations under the stack-sorting map. In doing so, we had to keep track of the tail lengths of the $231$-avoiding permutations under consideration. In this section, we count $3$-stack-sortable permutations by viewing them as preimages of $2$-stack-sortable permutations. We will again keep track of tail lengths, but we will also need an additional new statistic. 

\begin{definition}\label{Def4}
Given $\pi=\pi_1\cdots\pi_n\in S_n$ and $a\in\{0,\ldots,n\}$, we say the open interval $(a,a+1)$ is a \emph{legal space for $\pi$} if there do not exist indices $i_1<i_2<i_3$ such that $\pi_{i_3}\leq a<\pi_{i_1}<\pi_{i_2}$. Let $\leg(\pi)$ be the number of legal spaces of $\pi$.  
\end{definition}  

For example, if $\pi\in S_n$, then $\leg(\pi)=n+1$ if and only if $\pi$ avoids $231$. The legal spaces of $145326$ are $(0,1),(1,2),(4,5),(5,6),(6,7)$, so $\leg(145326)=5$. Imagine adding a new point somewhere to the left of all points in the plot of a permutation $\pi$. One can think of the legal spaces of $\pi$ as the vertical positions where the new point can be inserted so as to not form a new $2341$ pattern. This is relevant for us because of the following characterization of $2$-stack-sortable permutations due to West. 

\begin{theorem}[\!\!\cite{West}]\label{Thm8}
A permutation is $2$-stack-sortable if and only if it avoids the pattern $2341$ and also avoids any $3241$ pattern that is not part of a $35241$ pattern. 
\end{theorem}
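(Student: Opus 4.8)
The plan is to reduce the statement to Knuth's theorem and then analyze a single pass of the stack. Since a permutation is $2$-stack-sortable exactly when its image under $s$ is $1$-stack-sortable, Theorem~\ref{Thm1} lets me restate the goal as follows: for $\pi\in S_n$, the word $s(\pi)$ contains the pattern $231$ if and only if $\pi$ contains $2341$ or contains a $3241$ that does not extend to a $35241$. The engine of the argument is an explicit description of $s(\pi)$. For a position $i$, let $\beta(i)$ be the least $j>i$ with $\pi_j>\pi_i$, and set $\beta(i)=n+1$ if there is no such $j$. Following the stack-sorting procedure, one checks that $\pi_i$ leaves the stack at the moment $\pi_{\beta(i)}$ is read (or at the end of the input when $\beta(i)=n+1$), that the entries of $\pi$ sharing a common value of $\beta$ occur in decreasing order of value as one reads $\pi$ from left to right and are written to $s(\pi)$ in increasing order of value, and that entries with a smaller value of $\beta$ are written earlier. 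Consequently, for $i\neq i'$, the value $\pi_i$ precedes the value $\pi_{i'}$ in $s(\pi)$ if and only if $\beta(i)<\beta(i')$, or $\beta(i)=\beta(i')$ and $\pi_i<\pi_{i'}$.

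For the ``if'' direction I would argue contrapositively. If $\pi$ has a $2341$ at positions $p_1<p_2<p_3<p_4$, then $\beta(p_1)\le p_2<\beta(p_2)\le p_3<p_4<\beta(p_4)$, so the criterion above gives that $\pi_{p_1}$ precedes $\pi_{p_2}$ and that both precede $\pi_{p_4}$ in $s(\pi)$; since $\pi_{p_4}<\pi_{p_1}<\pi_{p_2}$, this is a $231$ in $s(\pi)$. If $\pi$ has a $3241$ at $p_1<p_2<p_3<p_4$ not extending to a $35241$ --- that is, no entry strictly between positions $p_1$ and $p_2$ exceeds $\pi_{p_3}$ --- I would split on whether $\beta(p_1)<\beta(p_2)$. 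In that case, unwinding the definition of $\beta$ shows that $\beta(p_1)$ is a position strictly between $p_1$ and $p_2$ with $\pi_{\beta(p_1)}>\pi_{p_1}$; non-extendability forces $\pi_{\beta(p_1)}<\pi_{p_3}$, so $p_1<\beta(p_1)<p_3<p_4$ is a $2341$ occurrence and we are reduced to the first case. Otherwise $\beta(p_1)\ge\beta(p_2)$, so $\pi_{p_2}$ precedes $\pi_{p_1}$ in $s(\pi)$, while $\beta(p_1)\le p_3<\beta(p_4)$ places $\pi_{p_4}$ after $\pi_{p_1}$; since $\pi_{p_4}<\pi_{p_2}<\pi_{p_1}$, again $s(\pi)$ contains a $231$.

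For the ``only if'' direction, also contrapositively, suppose $s(\pi)$ contains a $231$, realized by values $\pi_i,\pi_j,\pi_k$ occurring in that order in $s(\pi)$ with $\pi_k<\pi_i<\pi_j$. The criterion gives $\beta(i)\le\beta(j)<\beta(k)$. If $\beta(i)=\beta(j)=:m$, I would deduce $j<i<m<k$, that $\pi_k<\pi_i<\pi_j<\pi_m$, and that every entry strictly between positions $j$ and $m$ is at most $\pi_j$; then $j<i<m<k$ is a $3241$ occurrence, and since the interval $(j,i)$ is contained in $(j,m)$ it contains no entry exceeding $\pi_m$, so this $3241$ does not extend to a $35241$. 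If instead $\beta(i)<\beta(j)$, I would note that $\beta(j)<k$ and $\pi_{\beta(j)}>\pi_j$, and then run a short case analysis on the position of $j$ relative to $i$ and $\beta(i)$; in each case four of the positions $i,\beta(i),j,\beta(j),k$ turn out to form a $2341$ occurrence in $\pi$, with the inequalities $\pi_k<\pi_i<\pi_j$ and $\pi_{\beta(j)}>\pi_j$ determining the pattern. Either way $\pi$ contains $2341$ or a non-extendable $3241$, which completes the contrapositive.

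The main obstacle will be the case analysis in the last two paragraphs, and especially getting the role of $35241$ exactly right. The point is that the configuration which forces a $3241$ rather than a $2341$ is precisely $\beta(i)=\beta(j)$, and the equality ``$\beta(j)$ equals the common value $m$'' is exactly what forbids a large entry in the gap between positions $j$ and $i$; equivalently, inserting such an entry would move one of the two relevant entries into a different block of $s(\pi)$ and destroy that $231$ --- and this is the combinatorial meaning of ``the $3241$ extends to a $35241$''. Verifying the remaining sub-cases and the ``no extension'' conditions is routine bookkeeping but is where the real care is needed. (Alternatively, one could simply cite West's original argument in \cite{West}; the sketch above is a streamlined proof organized around the block description of $s(\pi)$.)
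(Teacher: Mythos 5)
Your proposal is correct, but note that the paper does not prove this statement at all: Theorem~\ref{Thm8} is quoted from West's thesis \cite{West} and used as a black box in Section~\ref{Sec:3-Stack}, so there is no in-paper argument to compare against. Your sketch is a legitimate self-contained proof, organized around the standard ``block'' description of $s(\pi)$: with $\beta(i)$ the position of the first entry to the right of $\pi_i$ exceeding it, $\pi_i$ precedes $\pi_{i'}$ in $s(\pi)$ iff $\beta(i)<\beta(i')$, or $\beta(i)=\beta(i')$ and $\pi_i<\pi_{i'}$; this criterion is right, and all the case analyses you defer as ``routine bookkeeping'' do close. A few of those hidden steps deserve to be written out, since they are exactly where the hypotheses enter: (i) in the non-extendable $3241$ case, concluding $p_1<\beta(p_1)<p_2$ genuinely uses the sub-case hypothesis $\beta(p_1)<\beta(p_2)$ (otherwise $\beta(p_1)$ could land in $(p_2,p_3]$ and would then bound $\beta(p_2)$ from above, a contradiction); (ii) in the converse, $\beta(j)<k$ needs the two-line argument that $k<j$ or $\beta(j)\geq k$ would each force $\beta(k)\leq\beta(j)$, contradicting $\beta(j)<\beta(k)$; (iii) in the equal-$\beta$ case, $m=\beta(j)\leq n$ (so that $\pi_m$ exists) follows from $\beta(j)<\beta(k)\leq n+1$, and $m<k$ needs the short check that $k\leq m$ would again contradict $\beta(k)>m$; and (iv) in the final case split, when $i<j$ the quadruple $i<j<\beta(j)<k$ is a $2341$, while when $j<i$ one uses $i<\beta(i)<\beta(j)<k$ together with the observation that $\beta(i)$ lies strictly between $j$ and $\beta(j)$, so $\pi_{\beta(i)}<\pi_j<\pi_{\beta(j)}$. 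With these details filled in, your identification of the equal-$\beta$ configuration as the exact source of the non-extendable $3241$ (and hence of the $35241$ exception) is the right conceptual explanation of West's pattern condition.
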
  

We are now in a position to state and prove the main theorems of this article. In what follows, let $B_{\geq\ell}^{(g)}(n)$ be the number of $3$-stack-sortable permutations $\sigma\in \mathcal W_3(n+\ell)$ such that $\tl(s(\sigma))\geq\ell$ and $\leg(s(\sigma))=\ell+g$. Also, recall the definitions from Section~\ref{Sec:DecompLemma}. 

\begin{theorem}\label{Thm6}
If $n\geq 1$, then \[W_3(n)=\sum_{g=1}^{n+1}B_{\geq 0}^{(g)}(n),\] where the numbers $B_{\geq\ell}^{(g)}(n)$ satisfy the following relations. We have $B_{\geq\ell}^{(0)}(n)=0$ and \[B_{\geq\ell}^{(g)}(1)=\begin{cases} 0, & \mbox{if } g\neq 2; \\ C_{\ell+1}, & \mbox{if } g=2. \end{cases}\] If $n,g\geq 1$ and $\ell\geq 0$, then \[
B_{\geq\ell}^{(g)}(n+1)=\sum_{j=1}^\ell\left( \sum_{a=2}^{n}\sum_{b=\max\{2,g-a\}}^{g-1}\sum_{i=a-1}^{n-b+1} B_{\geq j-1}^{(a)}(i) B_{\geq \ell-j+1}^{(b)}(n-i)+ B_{\geq j-1}^{(g-1)}(n)C_{\ell-j+1}\right) + B_{\geq\ell+1}^{(g-1)}(n).\] 
\end{theorem}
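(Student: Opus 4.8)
The plan is to mirror the structure of the $2$-stack-sortable argument in Section~\ref{Sec:2-Stack}, but now applied one level up: we view a $3$-stack-sortable permutation $\sigma\in\mathcal W_3(n+\ell)$ as an element of $s^{-1}(\mathcal W_2(n+\ell))$, so $\pi=s(\sigma)$ ranges over $2$-stack-sortable permutations, and we stratify by the tail length and the number of legal spaces of $\pi$. The first step is bookkeeping: since $\mathcal W_3=s^{-1}(\mathcal W_2)$, the identity $W_3(n)=\sum_{g=1}^{n+1}B_{\geq 0}^{(g)}(n)$ is just a partition of $\mathcal W_3(n)$ according to the value of $\leg(s(\sigma))$, using that $\leg$ of a permutation in $S_n$ lies between $1$ and $n+1$ (equality $n+1$ exactly when the image avoids $231$). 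The base cases are easy: $B_{\geq\ell}^{(0)}(n)=0$ because $\leg\geq 1$ always, and for $n+\ell=\ell+1$ we are looking at $s^{-1}$ of permutations in $S_{\ell+1}$ of the form $\pi$ with $\tl(\pi)\geq\ell$, i.e. $\pi=12\cdots\ell\,(\ell+1)$ or (after the forced tail) essentially the increasing permutation, for which $\leg=\ell+2$ forces $g=2$ and $|s^{-1}(12\cdots(\ell+1))|=C_{\ell+1}$ by Theorem~\ref{Thm1}.

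The heart of the proof is the recurrence for $B_{\geq\ell}^{(g)}(n+1)$. Here I would run the same decomposition as in the derivation of \eqref{Eq3}: take $\pi\in\mathcal W_2(n+1+\ell)$ with $\tl(\pi)\geq\ell$ and the largest entry $n+1+\ell$ sitting at some tail-bound descent position $d$, apply the decomposition lemma (Corollary~\ref{Cor1}) at $d$, and observe that $|s^{-1}(\pi)|$ equals the number of triples $(H,\mu,\lambda)$ with $H\in\SW_d(\pi)$, $\mu\in s^{-1}(\pi_U^H)$, $\lambda\in s^{-1}(\pi_S^H)$. Choosing $H$ means choosing $j\in\{1,\ldots,\ell\}$ (the northeast endpoint lands at the $j$-th tail point), and then $\pi$ is reconstructed from $(j,\pi_U^H,\pi_S^H)$ up to normalization, exactly as in the $2$-stack case. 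The new ingredients are: (i) $\pi$ is $2$-stack-sortable iff both $\pi_U^H$ and $\pi_S^H$ are $2$-stack-sortable together with some compatibility condition coming from West's characterization (Theorem~\ref{Thm8})—this is where the $2341$/$3241$/$35241$ patterns must be checked not to straddle the hook; (ii) $\leg(\pi)$ decomposes additively in terms of $\leg(\pi_U^H)$ and $\leg(\pi_S^H)$, and the precise relation is what produces the triple sum over $a,b$ with the constraint $a+b$ related to $g$, the term $b=g-1$ handling the degenerate case where $\pi_S^H$ contributes no extra legal space (paired with $C_{\ell-j+1}$, i.e. $\pi_U^H$ increasing on the relevant block), and the final stray term $B_{\geq\ell+1}^{(g-1)}(n)$ handling the case where the hook's northeast endpoint is not in the original tail but one position beyond it (the $\tl(\pi)=\ell$ versus $\tl(\pi)\geq\ell$ discrepancy, exactly analogous to $B_\ell(n+1)=B_{\geq\ell}(n+1)-B_{\geq\ell+1}(n)$ in the $2$-stack derivation).

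Concretely, after fixing $j$, I would set $a=\leg$ of the normalization of $\pi_S^H$ (an $i$-element permutation, necessarily $2$-stack-sortable, with $\tl\geq j-1$, contributing the factor $B_{\geq j-1}^{(a)}(i)$) and $b=\leg$ of the normalization of $\pi_U^H$ (an $(n-i)$-element permutation, $2$-stack-sortable, with $\tl\geq\ell-j+1$, contributing $B_{\geq\ell-j+1}^{(b)}(n-i)$), then carefully compute how $\leg(\pi)$ is built from $a$ and $b$: the legal spaces of $\pi$ below the sheltered block come from legal spaces of $\pi_S^H$, those above from legal spaces of $\pi_U^H$, with one shared boundary space, giving $\leg(\pi)=a+b-1$ generically, hence the summation range $b=\max\{2,g-a\}$ to $g-1$ and $i$ from $a-1$ to $n-b+1$ (the index bounds reflecting that an $i$-permutation with $\tl\geq j-1$ has at least $j-1$ tail points and at least... forcing $i\geq a-1$ via $\leg\leq$ length $+1$). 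The degenerate subcase (sheltered block empty or trivially increasing) is where $a$ effectively "disappears" and we instead get the single term $B_{\geq j-1}^{(g-1)}(n)C_{\ell-j+1}$.

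The main obstacle I anticipate is step (ii)—getting the legal-space arithmetic exactly right, including the precise index ranges and the correct treatment of the two degenerate cases (empty sheltered subpermutation, and the $g-1$ boundary term), together with verifying via Theorem~\ref{Thm8} that the compatibility condition for $\pi$ being $2$-stack-sortable is \emph{exactly} "both pieces $2$-stack-sortable" with no extra straddling obstruction (using that the hook, by construction, isolates the sheltered block below it so that no $2341$, $3241$, or $35241$ pattern can have entries on both sides in a forbidden configuration, given that $n+1+\ell$ is the maximal entry and sits at the hook's position). This verification, and the case analysis that feeds the exact summation limits in the statement, is the genuinely delicate part; the generating-function/Lagrange-inversion machinery of Section~\ref{Sec:2-Stack} is \emph{not} needed here since the theorem only asserts the combinatorial recurrence.
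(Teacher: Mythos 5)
Your overall strategy is the paper's: stratify $\mathcal W_3$ as $s^{-1}(\mathcal W_2)$ by tail length and $\leg$, decompose at the tail-bound descent via Corollary~\ref{Cor1}, and account separately for the exact-tail case and the $B_{\geq\ell+1}^{(g-1)}(n)$ correction. The bookkeeping steps (the partition giving $W_3(n)=\sum_g B_{\geq 0}^{(g)}(n)$, the base cases, and the tail-length correction term) are fine. But the core of the recurrence --- the legal-space arithmetic you yourself flag as the ``main obstacle'' --- is not just delicate, it is wrong as you set it up, and the missing idea is precisely what makes the triple sum over $a,b,i$ come out. You assume the values split cleanly between $\pi_U^H$ and $\pi_S^H$ as in the $2$-stack case, and that $\leg(\pi)$ is an additive function of $a=\leg$ of the sheltered normalization and $b=\leg$ of the unsheltered normalization (``$\leg(\pi)=a+b-1$ generically''). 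If that were true, then for fixed $a$ and $g$ the value of $b$ would be forced, and you could never produce the range $b=\max\{2,g-a\},\ldots,g-1$ appearing in the statement; your proposal gives no mechanism for it.

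What actually happens (and what the paper's proof hinges on) is that when $\pi$ is merely $2$-stack-sortable rather than $231$-avoiding, Theorem~\ref{Thm8} permits exactly one entry $r$ --- the largest entry to the left of the maximal non-tail entry $n+1$ (note: you decompose at $n+1$, not at the overall maximum $n+1+\ell$, which sits in the tail) --- whose value lies inside the value range of the sheltered block; $\pi_U^H$ is a permutation of $\{1,\ldots,n-i-1\}\cup\{r,n+1\}\cup(\text{tail values})$ and $\pi_S^H$ of $\{n-i,\ldots,n+j\}\setminus\{r,n+1\}$. Thus $\pi$ is \emph{not} determined by $j$ and the two normalizations: one must also choose $r$, and the $2341$-avoidance forces $r$ to be one of exactly $a$ values $\kappa_1<\cdots<\kappa_a$ read off from the legal spaces of $\pi_S^H$. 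Choosing $r=\kappa_m$ yields $\leg(\pi)=\ell+a+b-m+1$, so $g=a+b-m+1$; letting $m$ run over $\{1,\ldots,a\}$ is exactly what produces the range of $b$ for fixed $a,g$, and (with $a\le i+1$, $b\le n-i+1$) the bounds $a-1\le i\le n-b+1$. Your proposal has no analogue of $r$, so both the reconstruction of $\pi$ and the count of legal spaces are off. A smaller but related slip: the term $\sum_j B_{\geq j-1}^{(g-1)}(n)C_{\ell-j+1}$ comes from the case $i=n$, i.e.\ nothing lies to the left of $n+1$ (so $\pi_U^H$ is increasing and there is no $r$), not from the sheltered block being empty or increasing as you suggest.
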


\begin{proof}
The first statement and the fact that $B_{\geq\ell}^{(0)}(n)=0$ are clear from the definitions we have given. The permutations $\sigma$ counted by $B_{\geq\ell}^{(g)}(1)$ are in $S_{\ell+1}$ and satisfy $\tl(s(\sigma))\geq\ell$, so they must actually satisfy $s(\sigma)=123\cdots(\ell+1)$. Since $\leg(123\cdots(\ell+1))=\ell+2$, the formula for $B_{\geq\ell}^{(g)}(1)$ follows from Theorem~\ref{Thm1}. 

Now, let $B_\ell^{(g)}(n)$ be the number of $3$-stack-sortable permutations $\sigma\in \mathcal W_3(n+\ell)$ such that $\tl(s(\sigma))=\ell$ and $\leg(s(\sigma))=\ell+g$. Let 
\begin{equation}\label{Eq17}
\mathcal D_\ell^{(g)}(n)=\{\pi\in\mathcal W_2(n+\ell):\tl(\pi)=\ell,\,\leg(\pi)=\ell+g\}
\end{equation} and 
\begin{equation}\label{Eq18}
\mathcal D_{\geq\ell}^{(g)}(n)=\{\pi\in\mathcal W_2(n+\ell):\tl(\pi)\geq\ell,\,\leg(\pi)=\ell+g\}
\end{equation} so that \[B_\ell^{(g)}(n)=|s^{-1}(\mathcal D_\ell^{(g)}(n))|\quad\text{and}\quad B_{\geq\ell}^{(g)}(n)=|s^{-1}(\mathcal D_{\geq\ell}^{(g)}(n))|.\] We have $B_{\geq \ell}^{(g)}(n+1)=B_\ell^{(g)}(n+1)+B_{\geq\ell+1}^{(g-1)}(n)$, so we need to show that 
\begin{equation}\label{Eq16}
B_{\ell}^{(g)}(n+1)=\sum_{a=2}^{n}\sum_{b=\max\{2,g-a\}}^{g-1}\sum_{i=a-1}^{n-b+1}\sum_{j=1}^\ell B_{\geq j-1}^{(a)}(i) B_{\geq \ell-j+1}^{(b)}(n-i)+\sum_{j=1}^\ell B_{\geq j-1}^{(g-1)}(n)C_{\ell-j+1}. 
\end{equation}

Suppose $\pi\in\mathcal D_\ell^{(g)}(n+1)$ is such that $\pi_{n+1-i}=n+1$ (where $n\geq 0$). The decomposition lemma (Corollary~\ref{Cor1}) tells us that $|s^{-1}(\pi)|$ is equal to the number of triples $(H,\mu,\lambda)$, where $H\in\SW_{n+1-i}(\pi)$, $\mu\in s^{-1}(\pi_U^H)$, and $\lambda\in s^{-1}(\pi_S^H)$. Choosing $H$ amounts to choosing the number $j\in\{1,\ldots,\ell\}$ such that the northeast endpoint of $H$ is $(n+1+j,n+1+j)$. The permutation $\pi$ and the choice of $H$ determine the permutations $\pi_U^H$ and $\pi_S^H$. On the other hand, the choices of $H$ and the permutations $\pi_U^H$ and $\pi_S^H$ uniquely determine $\pi$. It follows that $B_\ell^{(g)}(n+1)$, which is the number of ways to choose an element of $s^{-1}(\mathcal D_\ell^{(g)}(n+1))$, is also the number of ways to choose the tuple $(j,\pi_U^H,\pi_S^H,\mu,\lambda)$. Let us fix a choice of $j$. 

Assume for the moment that $i\leq n-1$, and let $r$ be the largest entry appearing to the left of $n+1$ in $\pi$. Because $\pi$ is $2$-stack-sortable, we can use Theorem~\ref{Thm8} to see that $\pi_U^H$ is a permutation of the set $\{1,\ldots,n-i-1\}\cup\{r,n+1\}\cup\{n+2+j,\ldots,n+\ell+1\}$ and that $\pi_S^H$ is a permutation of $\{n-i,\ldots,n+j\}\setminus\{r,n+1\}$. Therefore, choosing $\pi_U^H$ and $\pi_S^H$ is equivalent to choosing their normalizations and the value of $r$. The normalization of $\pi_S^H$ is in $\mathcal D_{\geq j-1}^{(a)}(i)$ for some $a\in\{2,\ldots,i+1\}$, while the normalization of $\pi_U^H$ is in $\mathcal D_{\geq\ell-j+1}^{(b)}(n-i)$ for some $b\in\{2,\ldots,n-i+1\}$. Once we have chosen $a$ and $b$, the number of choices for the tuple $(\pi_U^H,\mu,\pi_S^H,\lambda)$ is $B_{\geq j-1}^{(a)}(i)B_{\geq \ell-j+1}^{(b)}(n-i)$.

Suppose we have already chosen the value of $a$. The fact that $\pi$ avoids $2341$ and the definition of a legal space tell us that there are $a$ possible values of $r$, say $\kappa_1<\cdots<\kappa_a$ (see Example~\ref{Exam1} for an illustration of this part of the proof). If we choose $r=\kappa_m$, then $\pi$ has $a+b-m+1+\ell$ legal spaces. We are assuming that $\leg(\pi)=\ell+g$, so $g=a+b-m+1$. It follows that $2\leq a\leq n$ and $\max\{2,g-a\}\leq b\leq g-1$. Since $a\in\{2,\ldots,i+1\}$ and $b\in\{2,\ldots,n-i+1\}$, we also have the constraint $a-1\leq i\leq n-b+1$. This explains the expression $\sum_{a=2}^{n}\sum_{b=\max\{2,g-a\}}^{g-1}\sum_{i=a-1}^{n-b+1}\sum_{j=1}^\ell B_{\geq j-1}^{(a)}(i) B_{\geq \ell-j+1}^{(b)}(n-i)$ in \eqref{Eq16}. 

The expression $\sum_{j=1}^\ell B_{\geq j-1}^{(g-1)}(n)C_{\ell-j+1}$ in \eqref{Eq16} comes from the case in which $i=n$. In this case, $\pi_S^H$ is in $\mathcal D_{\geq j-1}^{(g-1)}(n)$, and $\pi_U^H=(n+1)(n+2+j)(n+3+j)\cdots(n+\ell+1)$ is an increasing permutation of length $\ell-j+1$. The number of choices for the pair $(\pi_S^H,\lambda)$ is $B_{\geq j-1}^{(g-1)}(n)$. The number of choices for $\mu$ is $|s^{-1}(\pi_U^H)|=C_{\ell-j+1}$. 
\end{proof}

\begin{example}\label{Exam1}
Consider the part of the proof of Theorem~\ref{Thm6} in which we have already chosen $n,g,\ell,j,i$ and have assumed $i\leq n-1$. Suppose $n=8$, $\ell=5$, $j=2$, and $i=5$. If we choose the normalization of $\pi_U^H$ to be $24315678$ and choose the normalization of $\pi_S^H$ to be $315246$, then $a=\leg(315246)-(j-1)=5$ and $b=\leg(24315678)-(\ell-j+1)=4$. The green dots in Figure~\ref{Fig3} represent the possible choices for $r$, which are $\kappa_1=4$, $\kappa_2=5$, $\kappa_3=7$, $\kappa_4=8$, and $\kappa_5=9$. If $r=\kappa_m$, then we can refer to this figure to see that $\leg(\pi)=15-m=\ell+a+b-m+1$. Hence, the choice of $r$ is determined by the value of $g$.  

\begin{figure}[h]
\begin{center}
\includegraphics[width=.4\linewidth]{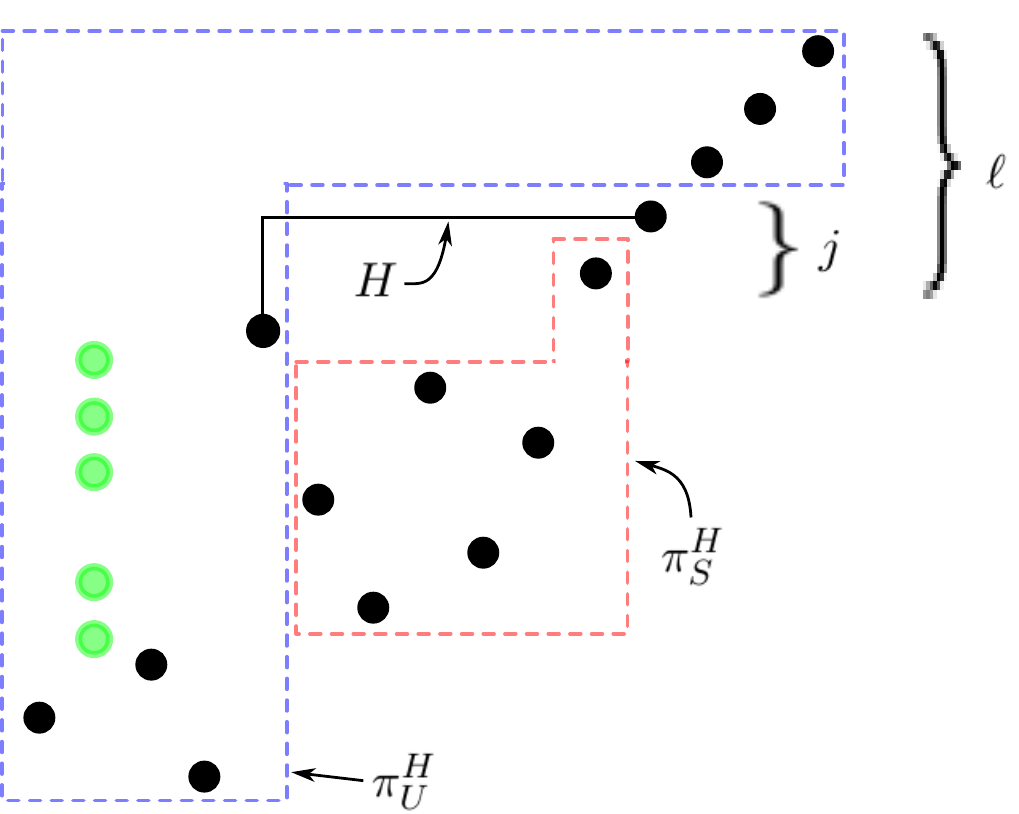}
\caption{The decomposition of $\pi$ into $\pi_U^H$ and $\pi_S^H$ along with the possible choices for $r$.}
\label{Fig3}
\end{center}  
\end{figure}
\end{example}

The proof of Theorem~\ref{Thm6} generalizes, allowing us to obtain a recurrence for $W_3(n,k,p)$, the number of $3$-stack-sortable permutations in $S_n$ with $k$ descents and $p$ peaks. We actually state the following theorem in terms of polynomials, but one can obtain the desired recurrence by comparing coefficients. In what follows, let \[E_{\geq\ell}^{(g)}(n)=\sum_{\sigma\in s^{-1}(\mathcal D_{\geq\ell}^{(g)}(n))}x^{\des(\sigma)+1}y^{\peak(\sigma)+1},\]
where $\mathcal D_{\geq\ell}^{(g)}(n)$ is as in \eqref{Eq18}. We have suppressed the dependence on $x$ and $y$ in our notation for readability. Let $L_r(x,y)$ be as in \eqref{Eq19}.    

\begin{theorem}\label{Thm9}
If $n\geq 1$, then \[\sum_{\sigma\in\mathcal W_3(n)}x^{\des(\sigma)+1}y^{\peak(\sigma)+1}=\sum_{g=1}^{n+1}E_{\geq 0}^{(g)}(n),\] where the polynomials $E_{\geq \ell}^{(g)}(n)$ satisfy the following relations. We have $E_{\geq\ell}^{(0)}(n)=0$ and \[E_{\geq\ell}^{(g)}(1)=\begin{cases} 0, & \mbox{if } g\neq 2; \\ L_{\ell+1}(x,y), & \mbox{if } g=2. \end{cases}\] If $n,g\geq 1$ and $\ell\geq 0$, then 
\[E_{\geq\ell}^{(g)}(n+1)=\sum_{j=1}^\ell\left(\sum_{a=2}^{n}\sum_{b=\max\{2,g-a\}}^{g-1}\sum_{i=a-1}^{n-b+1}E_{\geq j-1}^{(a)}(i) E_{\geq \ell-j+1}^{(b)}(n-i)+ E_{\geq j-1}^{(g-1)}(n)L_{\ell-j+1}(x,y)\right)\] \[+ E_{\geq\ell+1}^{(g-1)}(n).\] 
\end{theorem}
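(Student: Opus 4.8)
The plan is to mirror the proof of Theorem~\ref{Thm6} essentially verbatim, upgrading every fertility count $|s^{-1}(\cdot)|$ to the refined generating-function-valued count $\sum_{\sigma\in s^{-1}(\cdot)}x^{\des(\sigma)+1}y^{\peak(\sigma)+1}$ and replacing each application of the decomposition lemma (Corollary~\ref{Cor1}) with the refined decomposition lemma (Lemma~\ref{Lem1}). Concretely, I would introduce $E_\ell^{(g)}(n)=\sum_{\sigma\in s^{-1}(\mathcal D_\ell^{(g)}(n))}x^{\des(\sigma)+1}y^{\peak(\sigma)+1}$ alongside the already-defined $E_{\geq\ell}^{(g)}(n)$, using the sets $\mathcal D_\ell^{(g)}(n)$ and $\mathcal D_{\geq\ell}^{(g)}(n)$ from \eqref{Eq17} and \eqref{Eq18}. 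Summing over the disjoint decomposition of $\mathcal D_{\geq\ell}^{(g)}(n+1)$ according to whether the largest entry $n+1$ sits at the very end of $\pi$ or not gives $E_{\geq\ell}^{(g)}(n+1)=E_\ell^{(g)}(n+1)+E_{\geq\ell+1}^{(g-1)}(n)$, exactly as in the unrefined proof (the tail-length bookkeeping and the drop of one legal space are unchanged, since these are statistics of $\pi$, not of its preimages).

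The core step is then to establish the refined analogue of \eqref{Eq16}, namely
\[E_\ell^{(g)}(n+1)=\sum_{a=2}^{n}\sum_{b=\max\{2,g-a\}}^{g-1}\sum_{i=a-1}^{n-b+1}\sum_{j=1}^\ell E_{\geq j-1}^{(a)}(i)\,E_{\geq\ell-j+1}^{(b)}(n-i)+\sum_{j=1}^\ell E_{\geq j-1}^{(g-1)}(n)\,L_{\ell-j+1}(x,y).\]
For this I would fix $\pi\in\mathcal D_\ell^{(g)}(n+1)$ with $\pi_{n+1-i}=n+1$, choose the hook $H\in\SW_{n+1-i}(\pi)$ by picking its northeast endpoint $(n+1+j,n+1+j)$, and apply Lemma~\ref{Lem1} at the tail-bound descent $d=n+1-i$. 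Lemma~\ref{Lem1} gives $\sum_{\sigma\in s^{-1}(\pi)}x^{\des(\sigma)+1}y^{\peak(\sigma)+1}=\sum_{H\in\SW_d(\pi)}\big(\sum_{\mu\in s^{-1}(\pi_U^H)}x^{\des(\mu)+1}y^{\peak(\mu)+1}\big)\big(\sum_{\lambda\in s^{-1}(\pi_S^H)}x^{\des(\lambda)+1}y^{\peak(\lambda)+1}\big)$, and since $\des$ and $\peak$ are invariant under normalization of $\pi_U^H$ and $\pi_S^H$, the structural analysis from the proof of Theorem~\ref{Thm6} carries over unchanged: by Theorem~\ref{Thm8} the normalization of $\pi_S^H$ lies in $\mathcal D_{\geq j-1}^{(a)}(i)$ and the normalization of $\pi_U^H$ in $\mathcal D_{\geq\ell-j+1}^{(b)}(n-i)$, with $a,b$ constrained exactly as before via the legal-space count $g=a+b-m+1$ and the value $r=\kappa_m$; in the boundary case $i=n$ the permutation $\pi_U^H$ is increasing of length $\ell-j+1$, contributing $L_{\ell-j+1}(x,y)$ by Remark~\ref{Rem1} in place of $C_{\ell-j+1}$. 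Summing the product identity over all $\pi\in\mathcal D_\ell^{(g)}(n+1)$ — equivalently, over all tuples $(j,\pi_U^H,\pi_S^H,r)$ together with the preimage contributions $\mu,\lambda$ — yields the displayed recurrence, and combining it with $E_{\geq\ell}^{(g)}(n+1)=E_\ell^{(g)}(n+1)+E_{\geq\ell+1}^{(g-1)}(n)$ gives the theorem. The base cases are handled as in Theorem~\ref{Thm6}: $E_{\geq\ell}^{(0)}(n)=0$ and $E_{\geq\ell}^{(2)}(1)=\sum_{\sigma\in s^{-1}(12\cdots(\ell+1))}x^{\des(\sigma)+1}y^{\peak(\sigma)+1}=L_{\ell+1}(x,y)$ by Remark~\ref{Rem1}, while the identity $\sum_{\sigma\in\mathcal W_3(n)}x^{\des(\sigma)+1}y^{\peak(\sigma)+1}=\sum_{g=1}^{n+1}E_{\geq 0}^{(g)}(n)$ is immediate from $\mathcal W_3(n)=\bigsqcup_{g}\mathcal D_{\geq 0}^{(g)}(n)$ and $\mathcal W_3(n)=s^{-1}(\mathcal W_2(n))$.

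The only genuine content beyond transcription is verifying that the descent- and peak-bookkeeping in the decomposition of a preimage $\sigma=\tau\,j\,\lambda\,\tau''$ respects the product structure, but this is exactly the computation carried out inside the proof of Lemma~\ref{Lem1} (the identities $\des(\sigma)+1=\des(\mu)+1+\des(\lambda)+1$ and $\peak(\sigma)+1=\peak(\mu)+1+\peak(\lambda)+1$), so I would simply cite Lemma~\ref{Lem1} rather than reprove it. I expect the main obstacle — to the extent there is one — to be purely expository: making sure the nested sums over $a,b,i,j$ and the choice of $r$ are indexed consistently between the generating-function version and the counting version so that no term is double-counted or dropped, and flagging clearly that every place the proof of Theorem~\ref{Thm6} wrote $C_m$ it should now read $L_m(x,y)$ and every place it wrote $|s^{-1}(\cdot)|$ it should now read the weighted sum. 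Given this, the cleanest writeup is: ``The proof is identical to that of Theorem~\ref{Thm6}, except that we use Lemma~\ref{Lem1} in place of Corollary~\ref{Cor1}, we use Remark~\ref{Rem1} in place of Theorem~\ref{Thm1}, and we replace each fertility $|s^{-1}(\cdot)|$ and each Catalan number $C_m$ with $\sum_{\sigma\in s^{-1}(\cdot)}x^{\des(\sigma)+1}y^{\peak(\sigma)+1}$ and $L_m(x,y)$ respectively,'' followed by the single displayed refined recurrence for $E_\ell^{(g)}(n+1)$ and the remark $E_{\geq\ell}^{(g)}(n+1)=E_\ell^{(g)}(n+1)+E_{\geq\ell+1}^{(g-1)}(n)$.
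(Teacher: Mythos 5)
Your proposal is correct and follows the paper's own proof exactly: the paper likewise proves Theorem~\ref{Thm9} by repeating the argument of Theorem~\ref{Thm6} with the refined decomposition lemma (Lemma~\ref{Lem1}) in place of Corollary~\ref{Cor1} and Remark~\ref{Rem1} (i.e., $L_m(x,y)$) in place of Theorem~\ref{Thm1} (i.e., $C_m$). Your additional bookkeeping of $E_\ell^{(g)}(n)$ and the identity $E_{\geq\ell}^{(g)}(n+1)=E_\ell^{(g)}(n+1)+E_{\geq\ell+1}^{(g-1)}(n)$ is exactly the implicit content of that substitution.
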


\begin{proof}
To derive the formula for $E_{\geq\ell}^{(g)}(1)$, we follow the same argument used to find the formula for $B_{\geq\ell}^{(g)}(1)$ in the proof of Theorem~\ref{Thm6}, except we use Remark~\ref{Rem1} instead of Theorem~\ref{Thm1}. To derive the last statement in this theorem, we follow the rest of the proof of Theorem~\ref{Thm6}, except we invoke the refined decomposition lemma instead of the decomposition lemma and again use Remark~\ref{Rem1} instead of Theorem~\ref{Thm1}.  
\end{proof}

\section{Data Analysis}\label{Sec:Data}

The \emph{sum} of two permutations $\mu$ and $\lambda$, denoted $\mu\oplus\lambda$, is the permutation whose plot is obtained by placing the plot of $\lambda$ above and to the right of the plot of $\mu$. It is easy to check that the sum of two $t$-stack-sortable permutations is $t$-stack-sortable. It follows that $W_t(m+n)\geq W_t(m)W_t(n)$ for all $m,n\geq 1$. We express this by saying the sequence $(W_t(n))_{n\geq 1}$ is \emph{supermultiplicative}. It follows from Fekete's lemma that 
\begin{equation}\label{Eq20}
\lim_{n\to\infty}\frac{W_t(n+1)}{W_t(n)}=\lim_{n\to\infty}W_t(n)^{1/n}=\sup_{n\geq 1}W_t(n)^{1/n}.
\end{equation}   

We have used Theorem~\ref{Thm6} to compute the numbers $W_3(n)$ for $n\leq 174$. We have added these terms to the OEIS sequence A134664. This allows us to prove the first nontrivial lower bound for $\lim\limits_{n\to\infty}W_3(n)^{1/n}$. Note that this is better than the lower bound of $(\sqrt{3}+1)^2$ obtained in Section~\ref{Sec:Lower}. 

\begin{theorem}\label{Thm11}
We have \[\lim_{n\to\infty}W_3(n)^{1/n}\geq 8.659702.\] 
\end{theorem}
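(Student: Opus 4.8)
The plan is to exploit the supermultiplicativity relation \eqref{Eq20}, which tells us that $\lim_{n\to\infty}W_3(n)^{1/n}=\sup_{n\geq 1}W_3(n)^{1/n}$, so that \emph{any} single computed value $W_3(N)$ yields the lower bound $W_3(N)^{1/N}$ on the growth rate. Thus it suffices to exhibit one index $N\leq 174$ for which $W_3(N)^{1/N}\geq 8.659702$, and the cleanest choice is $N=174$ itself, since the ratios $W_3(n+1)/W_3(n)$ are (empirically, and provably by Fekete) approaching the supremum from below, so the largest available term gives the strongest bound.

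First I would invoke Theorem~\ref{Thm6} as the computational engine: the recurrence there expresses each $B_{\geq\ell}^{(g)}(n+1)$ as a finite sum of products of previously computed quantities $B_{\geq j-1}^{(a)}(i)B_{\geq\ell-j+1}^{(b)}(n-i)$ together with Catalan numbers, with base cases $B_{\geq\ell}^{(0)}(n)=0$ and $B_{\geq\ell}^{(g)}(1)=C_{\ell+1}[g=2]$. Running this recurrence for $n$ up to $174$ and assembling $W_3(n)=\sum_{g=1}^{n+1}B_{\geq 0}^{(g)}(n)$ produces the exact integer value of $W_3(174)$. Then I would simply verify the numerical inequality $W_3(174)^{1/174}\geq 8.659702$, equivalently $W_3(174)\geq 8.659702^{174}$, which is a finite check on explicit integers (one can compare $\lfloor\log_{10}W_3(174)\rfloor$ and the leading digits against $174\log_{10}(8.659702)$ to make the verification rigorous without floating-point worries).

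The main obstacle is not mathematical but computational and one of verification: establishing that the recurrence in Theorem~\ref{Thm6} has in fact been implemented correctly and that the resulting value of $W_3(174)$ is the true one. Since the recurrence was proved rigorously above, the only real concern is arithmetic correctness of the machine computation; this can be bolstered by cross-checking the first $13$ terms against the previously known brute-force values (those stored in OEIS A134664) and by an independent re-run, but ultimately the bound rests on the correctness of the computation. A secondary, very minor point is that Fekete's lemma via \eqref{Eq20} requires supermultiplicativity $W_3(m+n)\geq W_3(m)W_3(n)$, which holds because the direct sum $\mu\oplus\lambda$ of two $3$-stack-sortable permutations is $3$-stack-sortable; this is recorded at the start of Section~\ref{Sec:Data} and needs only to be cited. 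Putting these pieces together — the supermultiplicativity identity, the exact computation of $W_3(174)$ via Theorem~\ref{Thm6}, and the elementary inequality $W_3(174)^{1/174}\geq 8.659702$ — completes the proof.
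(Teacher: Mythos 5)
Your proposal matches the paper's argument exactly: the paper also combines the supermultiplicativity of $(W_3(n))_{n\geq 1}$ via \eqref{Eq20} with the explicit value of $W_3(174)$ computed from the recurrence in Theorem~\ref{Thm6}, noting that its $174^{\text{th}}$ root exceeds $8.659702$. There is nothing to add beyond the computational-verification caveat you already flag.
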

\begin{proof}
The value of $W_3(174)$ is
\begin{equation*}
\begin{split}
&1335109055832443343636882328903941541
553316885478273864987091560565206631540
380152\\
 &787051400123018026588950184116831
251220601282385312955696662890107919486
8270269904,
\end{split}
\end{equation*}
and the $174^\text{th}$ root of this number is slightly more than $8.659702$. The proof follows from \eqref{Eq20}. 
\end{proof}

We can also show that B\'ona's Conjectures~\ref{Conj3} and \ref{Conj4} contradict each other. 

\begin{theorem}\label{Thm12}
If $(W_3(n))_{n\geq 1}$ is log-convex, then $W_3(n)>{4n\choose n}$ for all sufficiently large $n$.  
\end{theorem}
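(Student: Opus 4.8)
The plan is to use the log-convexity hypothesis to turn the single computed value $W_3(174)$ — together with its predecessor $W_3(173)$, also produced by the recurrence of Theorem~\ref{Thm6} — into an exponential lower bound for $W_3(n)$ valid for all $n\geq 174$, and then to compare that bound with the growth rate of $\binom{4n}{n}$.

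First, the hypothesis that $(W_3(n))_{n\geq 1}$ is log-convex means precisely that the ratio sequence $r_n:=W_3(n+1)/W_3(n)$ is nondecreasing. Hence $r_n\geq r_{173}$ for all $n\geq 173$, and telescoping from $n=174$ gives
\[W_3(n)\;\geq\;W_3(174)\,\rho^{\,n-174}\qquad(n\geq 174),\qquad\text{where }\ \rho:=\frac{W_3(174)}{W_3(173)}.\]
Only the monotonicity of the $r_n$ is used here, though one could alternatively invoke \eqref{Eq20}, which in the log-convex case forces $r_n\uparrow\lim_{m\to\infty}W_3(m)^{1/m}$.

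Second, I would record the elementary bound $\binom{4n}{n}\leq(256/27)^n$ for all $n\geq 1$ — for instance by evaluating $1=\sum_{k=0}^{4n}\binom{4n}{k}(1/4)^k(3/4)^{4n-k}\geq\binom{4n}{n}(27/256)^n$ — and then, using the computed values of $W_3(173)$ and $W_3(174)$, verify the numerical inequality $\rho=W_3(174)/W_3(173)>256/27=9.4814\ldots$. Granting this,
\[\frac{W_3(n)}{\binom{4n}{n}}\;\geq\;\frac{W_3(174)\,\rho^{\,n-174}}{(256/27)^n}\;=\;W_3(174)\,\rho^{-174}\left(\frac{\rho}{256/27}\right)^{\!n}\;\longrightarrow\;\infty\qquad(n\to\infty),\]
because $\rho/(256/27)>1$. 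Hence $W_3(n)>\binom{4n}{n}$ for all sufficiently large $n$.

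The conceptual content is slight; the step that requires genuine care is the numerical verification $W_3(174)/W_3(173)>256/27$. It is worth stressing why Theorem~\ref{Thm11} does not already settle this: it gives only $\lim_n W_3(n)^{1/n}\geq 8.659702$, and $8.659702<256/27$, so merely controlling the geometric growth rate is not enough. Log-convexity is exactly what circumvents the gap, since it lets us replace the $174$-th root $W_3(174)^{1/174}$ — a geometric mean dragged below the limit by the small early ratios $r_1=2,\ r_2=3,\ r_3=4,\dots$ — with the \emph{latest} ratio $r_{173}$, which the data shows already overtakes the asymptotic growth rate $256/27$ of $\binom{4n}{n}$; monotonicity of the ratios then carries this past $n=174$.
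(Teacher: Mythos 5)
Your proof is correct and takes essentially the same route as the paper: both hinge on the computed ratio $W_3(174)/W_3(173)\approx 9.4907>256/27$ and use log-convexity to propagate it forward, the paper phrasing this through $\lim_{n\to\infty}W_3(n)^{1/n}=\lim_{n\to\infty}W_3(n+1)/W_3(n)$ together with Stirling's estimate for $\binom{4n}{n}^{1/n}$, while you make the same comparison explicit via the telescoped bound $W_3(n)\geq W_3(174)\rho^{\,n-174}$ and the elementary inequality $\binom{4n}{n}\leq(256/27)^n$. There are no gaps; your version is merely a more self-contained rendering of the paper's argument.
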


\begin{proof}
It follows from Stirling's formula that $\lim\limits_{n\to\infty}{4n\choose n}^{1/n}=256/27\approx 9.4815$. Also, $\dfrac{W_3(174)}{W_3(173)}\approx 9.4907$. If $(W_3(n))_{n\geq 1}$ is log-convex, then $\lim\limits_{n\to\infty}W_3(n)^{1/n}=\lim\limits_{n\to\infty}\dfrac{W_3(n+1)}{W_3(n)}\geq 9.4907>9.4815$.     
\end{proof}

We now turn our attention to the parity of $W_3(n)$ and B\'ona's Conjecture~\ref{Conj6}. Let $\varepsilon_t(n)$ be the number in $\{0,1\}$ with the same parity as $W_t(n)$. As mentioned in the introduction, $\varepsilon_t(n)=0$ whenever $n$ is even. The values of $\varepsilon_3(2n+1)$ for $0\leq n\leq 86$ are 
\begin{equation}\label{Eq21}
1, 0, 0, 0, 1, 0,
1, 1, 1, 1, 1, 1, 0, 0, 0, 1, 1, 0, 0, 1, 1,
1, 1, 1, 1, 1, 1, 0, 1, 1, 1, 1, 1, 0, 1, 1,
1, 1, 1, 0, 1, 0,
\end{equation} \[ 0, 1, 0, 1, 0, 1, 1, 1, 0,
1, 0, 0, 1, 0, 1, 1, 0, 1, 1, 0, 0, 0, 1, 1, 1, 1, 0, 1, 1, 0, 0, 1, 1, 0, 0, 1, 0, 0, 0, 0, 0, 1, 0, 1, 0.\] Letting $\mathfrak g_t(m)=\sum_{n=1}^m\varepsilon_t(n)$, we can use this data to find that $\mathfrak g_1(m)<\mathfrak g_2(m)\leq\mathfrak g_3(m)$ whenever $13\leq m\leq 660$. Therefore, it appears that $W_3(n)$ is odd more frequently than $W_2(n)$! If this is true, then B\'ona's Conjecture~\ref{Conj6} is certainly false. We state some new conjectures and open problems concerning the parities of the numbers $W_3(n)$ in Section~\ref{Sec:Conclusion}. 

Let us end this section by recording one final proposition, which verifies B\'ona's Conjecture~\ref{Conj5} in several new cases. We have obtained this proposition by computing several values of $W_3(n,k)$ via Theorem~\ref{Thm9} (setting $y=1$ in that theorem). 

\begin{proposition}\label{Prop1}
If $n\leq 43$, then the polynomial $\displaystyle\sum_{\sigma\in\mathcal W_3(n)}x^{\des(\sigma)}$ has only real roots.
\end{proposition}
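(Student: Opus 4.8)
The plan is to verify Proposition~\ref{Prop1} by direct computation. Since Theorem~\ref{Thm9} provides an explicit recurrence for the polynomials $E_{\geq\ell}^{(g)}(n)$ in the variables $x$ and $y$, setting $y=1$ collapses these to polynomials $E_{\geq\ell}^{(g)}(n)\big|_{y=1}$ in the single variable $x$, with $L_r(x,1)=N_r(x)$ the Narayana polynomial by \eqref{Eq19}. One then computes $\sum_{\sigma\in\mathcal W_3(n)}x^{\des(\sigma)}=x^{-1}\sum_{g=1}^{n+1}E_{\geq 0}^{(g)}(n)\big|_{y=1}$ for each $n$ up to $43$ (the factor $x^{-1}$ accounts for the shift from $x^{\des(\sigma)+1}$ to $x^{\des(\sigma)}$, and one checks the result is genuinely a polynomial). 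Because $\sum_{\sigma\in\mathcal W_3(n)}x^{\des(\sigma)}$ is known to be symmetric and unimodal (B\'ona) and even $\gamma$-nonnegative (Theorem~\ref{Thm3}), its coefficient vector is nonnegative and it has degree $n-1$, so its roots can be located numerically and one confirms they are all real for $1\le n\le 43$.

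The key steps, in order: first, implement the recurrence of Theorem~\ref{Thm9} with $y=1$, carrying out exact rational (in fact integer) arithmetic on polynomial coefficients so there is no floating-point error in the polynomials themselves — the base cases are $E_{\geq\ell}^{(0)}(n)=0$ and $E_{\geq\ell}^{(2)}(1)=N_{\ell+1}(x)$, all other $E_{\geq\ell}^{(g)}(1)=0$. Second, run the recurrence to obtain $W_3(n,k)$ for all $k$ and all $n\le 43$; as a correctness check, verify $\sum_k W_3(n,k)$ agrees with the values $W_3(n)$ computed from Theorem~\ref{Thm6} and with the first $13$ known terms of OEIS A134664, and verify the symmetry $W_3(n,k)=W_3(n,n-1-k)$ predicted by B\'ona's theorem. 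Third, for each $n\le 43$, certify real-rootedness of the degree-$(n-1)$ polynomial $\sum_{\sigma\in\mathcal W_3(n)}x^{\des(\sigma)}$: since all coefficients are positive, every real root is negative, so it suffices to count sign changes of the polynomial evaluated on a sufficiently fine grid of negative reals (or, more robustly, to use a Sturm sequence, which gives an exact certificate of the number of distinct real roots), and then separately rule out repeated complex-conjugate pairs by checking the discriminant or simply by confirming the Sturm count of distinct real roots plus their multiplicities accounts for all $n-1$ roots.

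The main obstacle is a computational rather than a conceptual one: the recurrence in Theorem~\ref{Thm9} is a nested sum over $a$, $b$, $i$, $j$, so computing all the polynomials $E_{\geq\ell}^{(g)}(n)$ up to $n=43$ involves on the order of $n^5$ polynomial multiplications, each producing polynomials of degree up to roughly $n$ with large integer coefficients; careful bookkeeping (fixing the ranges $\max\{2,g-a\}\le b\le g-1$ and $a-1\le i\le n-b+1$ exactly as written, and tabulating $E_{\geq\ell}^{(g)}(m)$ for all relevant $\ell,g,m$ before proceeding) is needed so that the computation terminates in reasonable time and so that no index is mishandled. The real-rootedness certification itself is comparatively easy once the coefficient lists are in hand, because the $\gamma$-nonnegativity from Theorem~\ref{Thm3} already constrains the shape of the polynomial heavily and a Sturm-sequence computation in exact arithmetic leaves no room for numerical doubt.
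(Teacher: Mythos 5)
Your proposal is correct and matches the paper's approach: the paper likewise obtains Proposition~\ref{Prop1} by computing the polynomials $\sum_{\sigma\in\mathcal W_3(n)}x^{\des(\sigma)}$ for $n\leq 43$ from the recurrence of Theorem~\ref{Thm9} specialized at $y=1$ (so the base cases become Narayana polynomials) and then verifying real-rootedness of the resulting coefficient lists. Your added detail on exact certification via Sturm sequences only makes explicit what the paper leaves as a routine computational check.
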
 

\section{Lower Bounds for $t$-Stack-Sortable Permutations}\label{Sec:Lower}

Let $\Gamma_t$ be the set of all $\kappa=\kappa_1\cdots\kappa_{t+2}\in S_{t+2}$ such that $\kappa_{t+1}=t+2$ and $\kappa_{t+2}=1$. Let $\Av_n(\Gamma_t)$ be the set of permutations in $S_n$ that avoid all of the patterns in $\Gamma_t$. After applying a dihedral symmetry to the permutations in $\Gamma_t$, we can use a result of Kremer \cite{Kremer1, Kremer2} to see that 
\begin{equation}\label{Eq22}
\sum_{n\geq t}|\Av_n(\Gamma_t)|x^n=(t-1)!x^{t-2}\frac{1+(t-1)x-\sqrt{1-2(t+1)x+(t-1)^2x^2}}{2}.
\end{equation}
Some basic singularity analysis now shows that $\lim\limits_{n\to\infty}|\Av_n(\Gamma_t)|^{1/n}=(\sqrt{t}+1)^2$. 

We will prove by induction that $\Av_n(\Gamma_t)\subseteq\mathcal W_t(n)$. Since $\Gamma_1=\{231\}$, this is certainly true for $t=1$ (by Theorem~\ref{Thm1}). Now suppose that $t\geq 2$ and that $\Av_n(\Gamma_{t-1})\subseteq\mathcal W_{t-1}(n)$. Choose a permutation $\pi\in S_n\setminus\mathcal W_t(n)$. This means that $s(\pi)\not\in\mathcal W_{t-1}(n)$, so $s(\pi)$ contains a permutation in $\Gamma_{t-1}$. In other words, there exist entries $b_1,\ldots,b_{t-1},c,a$ that appear in this order in $s(\pi)$ and satisfy $a<b_j<c$ for all $j\in\{1,\ldots,t-1\}$. Because $c$ appears to the left of $a$ in $s(\pi)$, there must be an entry $d>c$ that appears to the right of $c$ and to the left of $a$ in $\pi$. The entries $b_1,\ldots,b_{t-1}$ must appear to the left of $d$ in $\pi$ since they would appear to the right of $c$ in $s(\pi)$ otherwise. The subpermutation of $\pi$ formed by the entries $a,b_1,\ldots,b_{t-1},c,d$ has a normalization that is in $\Gamma_t$, so $\pi\not\in\Av_n(\Gamma_t)$. This completes the induction and proves the following theorem. 

\begin{theorem}\label{Thm4}
For every $t\geq 1$, we have \[\lim_{n\to\infty}W_t(n)^{1/n}\geq(\sqrt t+1)^2.\]
\end{theorem}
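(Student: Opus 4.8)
The plan is to prove the chain of inclusions $\Av_n(\Gamma_t)\subseteq\mathcal W_t(n)$ by induction on $t$, and then extract the growth-rate bound from the known generating function \eqref{Eq22}. The base case $t=1$ is immediate from Theorem~\ref{Thm1}, since $\Gamma_1=\{231\}$ and the $1$-stack-sortable permutations are precisely $\Av_n(231)$. For the inductive step, I would prove the contrapositive: assuming $\Av_n(\Gamma_{t-1})\subseteq\mathcal W_{t-1}(n)$, I take a permutation $\pi\in S_n$ that is not $t$-stack-sortable and show it contains a pattern in $\Gamma_t$. Since $s^t(\pi)$ is not increasing, $s^{t-1}(s(\pi))$ is not increasing, so $s(\pi)\notin\mathcal W_{t-1}(n)$; by the inductive hypothesis $s(\pi)\notin\Av_n(\Gamma_{t-1})$, hence $s(\pi)$ contains some pattern in $\Gamma_{t-1}$. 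Concretely this gives entries $b_1,\dots,b_{t-1},c,a$ occurring in this left-to-right order in $s(\pi)$ with $a<b_j<c$ for all $j$ (the $b_j$'s are the ``middle'' entries, $c=t+1$ plays the role of the $(t)$-th position value $t+2$ in $S_{t+1}$ after normalization, and $a$ the final small entry).

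The heart of the argument is a standard ``lifting'' fact about the stack-sorting map: if $c$ appears to the left of a smaller entry $a$ in the output $s(\pi)$, then in $\pi$ there must be an entry $d>c$ lying strictly between $c$ and $a$. Indeed, $c$ enters the stack before $a$ does, so for $c$ to be output before $a$, some entry larger than $c$ must arrive while $c$ sits on the stack and force $c$ out; that entry is $d$, and it lies to the right of $c$ and to the left of $a$ in $\pi$. Next I argue that each $b_j$ must appear to the left of $d$ in $\pi$: if some $b_j$ were to the right of $d$ in $\pi$, then since $b_j<c<d$, the entry $c$ would be popped and output before $b_j$ is even read, putting $c$ to the left of $b_j$ in $s(\pi)$, contradicting the order $b_1,\dots,b_{t-1},c,a$ in $s(\pi)$. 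Combining these, the entries $a,b_1,\dots,b_{t-1},c,d$ in $\pi$ appear with $a$ last, $d$ second-to-last, all $b_j$'s before $d$, and (since the $b_j$'s precede $c$ in $s(\pi)$ and nothing forces a reordering among entries all smaller than $c$) they precede $c$ in $\pi$ as well; the value order is $a<b_j<c<d$. Normalizing this $(t+2)$-entry subword yields a permutation in $S_{t+2}$ whose second-to-last entry is the maximum $t+2$ and whose last entry is $1$, i.e.\ an element of $\Gamma_t$. Hence $\pi\notin\Av_n(\Gamma_t)$, completing the induction.

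With $\Av_n(\Gamma_t)\subseteq\mathcal W_t(n)$ established, we get $W_t(n)=|\mathcal W_t(n)|\geq|\Av_n(\Gamma_t)|$, so $\lim_{n\to\infty}W_t(n)^{1/n}\geq\lim_{n\to\infty}|\Av_n(\Gamma_t)|^{1/n}$ (the limit on the left exists by \eqref{Eq20}, and the limit on the right exists by elementary singularity analysis of \eqref{Eq22}). It remains to compute $\lim_{n\to\infty}|\Av_n(\Gamma_t)|^{1/n}$. After applying a dihedral symmetry to realize $\Av_n(\Gamma_t)$ as one of Kremer's classes, \eqref{Eq22} expresses its generating function in terms of $\sqrt{1-2(t+1)x+(t-1)^2x^2}$; the dominant singularity is the smaller root of $1-2(t+1)x+(t-1)^2x^2=0$, namely $x=\big((t+1)-2\sqrt{t}\big)/(t-1)^2=1/(\sqrt{t}+1)^2$ after rationalizing, so the exponential growth rate is its reciprocal $(\sqrt{t}+1)^2$. (For $t=1$ the polynomial under the radical degenerates to $1-4x$, giving growth rate $4=(\sqrt{1}+1)^2$, consistent with the Catalan numbers.) This yields the claimed bound $\lim_{n\to\infty}W_t(n)^{1/n}\geq(\sqrt{t}+1)^2$.

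\textbf{Main obstacle.} The delicate point is the combinatorial bookkeeping in the inductive step — precisely tracking, via the mechanics of the stack-sorting procedure, where the entries $b_1,\dots,b_{t-1},c,d,a$ sit relative to one another in $\pi$ (as opposed to in $s(\pi)$), and in particular verifying that the $b_j$'s land to the left of $d$ and that the resulting $(t+2)$-pattern genuinely normalizes into $\Gamma_t$ rather than some other configuration. Everything else (the base case, the reduction from non-$t$-sortability to a $\Gamma_{t-1}$-pattern in $s(\pi)$, and the singularity analysis) is routine.
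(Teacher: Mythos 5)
Your proposal is correct and is essentially the paper's own proof: the same induction showing $\Av_n(\Gamma_t)\subseteq\mathcal W_t(n)$ via the entries $b_1,\ldots,b_{t-1},c,a$ in $s(\pi)$, the lifted entry $d>c$ between $c$ and $a$ in $\pi$, and the observation that the $b_j$'s must lie left of $d$, followed by Kremer's generating function and singularity analysis giving growth rate $(\sqrt t+1)^2$. One small remark: your parenthetical claim that the $b_j$'s also precede $c$ in $\pi$ is not justified (a $b_j$ may well sit between $c$ and $d$ in $\pi$ and still be output before $c$), but this is harmless and unneeded, since $\Gamma_t$ contains every pattern whose second-to-last entry is $t+2$ and last entry is $1$, with the first $t$ entries in arbitrary order.
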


In \cite{Smith}, Smith investigated a variant of the stack-sorting map known as the ``left-greedy algorithm." Let $\widehat{\mathcal W}_t(n)$ be the set of permutations in $S_n$ that can be sorted by $t$ stacks in series using the left-greedy algorithm (see her paper for definitions). Smith proved that $\mathcal W_t(n)\subseteq\widehat{\mathcal W}_t(n)$ and that $|\widehat{\mathcal W}_t(n)|\geq \dfrac{t!}{(t+1)^t}(t+1)^n$ whenever $n\geq t\geq 1$. In terms of exponential growth rates, this shows that $\lim\limits_{n\to\infty}|\widehat{\mathcal W}_t(n)|^{1/n}\geq t+1$ (using Fekete's lemma, one can show that this limit exists). The following corollary of Theorem~\ref{Thm4} improves this estimate. 

\begin{corollary}\label{Cor2}
For every $t\geq 1$, we have $\lim\limits_{n\to\infty}|\widehat{\mathcal W}_t(n)|^{1/n}\geq (\sqrt t+1)^2$. 
\end{corollary}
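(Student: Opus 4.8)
The plan is to deduce Corollary~\ref{Cor2} immediately from Theorem~\ref{Thm4} together with Smith's containment $\mathcal W_t(n)\subseteq\widehat{\mathcal W}_t(n)$. Since every $t$-stack-sortable permutation can be sorted by $t$ stacks in series under the left-greedy algorithm, we have $|\widehat{\mathcal W}_t(n)|\geq W_t(n)=|\mathcal W_t(n)|$ for all $n\geq 1$. Taking $n^{\text{th}}$ roots and letting $n\to\infty$ then gives
\[\lim_{n\to\infty}|\widehat{\mathcal W}_t(n)|^{1/n}\geq\lim_{n\to\infty}W_t(n)^{1/n}\geq(\sqrt{t}+1)^2,\]
where the first inequality uses monotonicity of the limit (both limits exist: the one for $W_t(n)^{1/n}$ by \eqref{Eq20}, and the one for $|\widehat{\mathcal W}_t(n)|^{1/n}$ by the Fekete-type argument Smith gives, as noted in the paragraph preceding the corollary) and the second is exactly Theorem~\ref{Thm4}.

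The only point requiring a word of care is making sure the relevant limit actually exists before we compare it; once existence is granted, the comparison is just the fact that $\lim$ preserves weak inequalities between sequences. I would therefore state that $\widehat{\mathcal W}_t$ is supermultiplicative (the sum of two permutations sortable by $t$ stacks in series under the left-greedy algorithm is again sortable by $t$ stacks in series under the left-greedy algorithm, just as for $s$) so that Fekete's lemma supplies the existence of $\lim_{n\to\infty}|\widehat{\mathcal W}_t(n)|^{1/n}$, and then invoke Theorem~\ref{Thm4}.

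There is essentially no obstacle here: this is a one-line corollary whose entire content is packaged in Theorem~\ref{Thm4} and in Smith's result $\mathcal W_t(n)\subseteq\widehat{\mathcal W}_t(n)$, both of which we are entitled to use. The ``hard part,'' such as it is, was already done in proving Theorem~\ref{Thm4}, namely the induction showing $\Av_n(\Gamma_t)\subseteq\mathcal W_t(n)$ combined with Kremer's enumeration \eqref{Eq22} and the singularity analysis yielding the growth rate $(\sqrt{t}+1)^2$.

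\begin{proof}
Smith proved that $\mathcal W_t(n)\subseteq\widehat{\mathcal W}_t(n)$, so $|\widehat{\mathcal W}_t(n)|\geq W_t(n)$ for every $n\geq 1$. The limit $\lim_{n\to\infty}|\widehat{\mathcal W}_t(n)|^{1/n}$ exists (it equals $\sup_{n\geq 1}|\widehat{\mathcal W}_t(n)|^{1/n}$ by Fekete's lemma, since $\widehat{\mathcal W}_t$ is closed under $\oplus$ and hence $(|\widehat{\mathcal W}_t(n)|)_{n\geq 1}$ is supermultiplicative), and $\lim_{n\to\infty}W_t(n)^{1/n}$ exists by \eqref{Eq20}. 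Therefore
\[\lim_{n\to\infty}|\widehat{\mathcal W}_t(n)|^{1/n}\geq\lim_{n\to\infty}W_t(n)^{1/n}\geq(\sqrt{t}+1)^2,\]
where the last inequality is Theorem~\ref{Thm4}.
\end{proof}
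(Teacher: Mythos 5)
Your proof is correct and is essentially the same argument the paper intends: combine Smith's containment $\mathcal W_t(n)\subseteq\widehat{\mathcal W}_t(n)$ with Theorem~\ref{Thm4}, with the existence of $\lim_{n\to\infty}|\widehat{\mathcal W}_t(n)|^{1/n}$ supplied by Fekete's lemma exactly as the paper indicates in the preceding paragraph. Your added remark that $\widehat{\mathcal W}_t$ is closed under $\oplus$ (hence supermultiplicative) is the right justification for that existence claim and matches the paper's intent.
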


\section{Symmetry, Unimodality, $\gamma$-Nonnegativity, Log-Concavity, and Real-Rootedness}\label{Sec:Symmetry}

We devote this brief section to showing how Br\"and\'en's theorem concerning $\gamma$-nonnegativity (Theorem~\ref{Thm3}) follows easily from Theorem~\ref{Thm10}. We also show that the analogue of that theorem with ``$\gamma$-nonnegative" replaced by ``real-rooted" is false. Let us begin by recalling some definitions. 

A polynomial $p(x)=a_0+a_1x+\cdots+a_nx^n\in\mathbb R_{\geq 0}[x]$ is called \begin{itemize}
\item \emph{symmetric} if $a_i=a_{n-i}$ for all $i\in\{0,\ldots,n\}$; in this case, $n/2$ is called the \emph{center of symmetry} of $p(x)$;
\item \emph{unimodal} if there exists $j\in\{0,\ldots,n\}$ such that $a_0\leq a_1\leq\cdots\leq a_j\geq a_{j+1}\geq\cdots\geq a_n$;
\item \emph{log-concave} if $a_{i-1}a_{i+1}\leq a_i^2$ for all $i\in\{1,\ldots,n-1\}$;
\item \emph{real-rooted} if all of the complex roots of $p(x)$ are real.
\end{itemize}
If $p(x)$ is a symmetric polynomial with center of symmetry $n/2$, then it can be written in the form $p(x)=\sum_{m=0}^{\left\lfloor n/2\right\rfloor}\gamma_m x^m(1+x)^{n-2m}$ for some real numbers $\gamma_m$. We then say $p(x)$ is $\gamma$-\emph{nonnegative} if the numbers $\gamma_m$ are all nonnegative. We have the following implications among these properties for polynomials in $\mathbb R_{\geq 0}[x]$ \cite{Branden2}: \[\text{real-rooted}\Longrightarrow\text{log-concave}\Longrightarrow\text{unimodal};\] \[\text{symmetric and real-rooted}\Longrightarrow \gamma\text{-nonnegative}\Longrightarrow\text{symmetric and unimodal}.\]

\begin{proof}[New Proof of Theorem~\ref{Thm3}]\hspace{-.2cm}\footnote{To deduce B\'ona's symmetry and unimodality result from Theorem~\ref{Thm10}, one simply needs to observe that this theorem tells us that $\sum_{\sigma\in \mathcal W_t(n)}x^{\des(\sigma)+1}$ is a sum of products of Narayana polynomials with the same center of symmetry and then use the well-known fact that Narayana polynomials are real-rooted.}
Note that it suffices to prove Theorem~\ref{Thm3} in the specific case in which $A=\{\pi\}$ is a singleton set. Indeed, the result for a general set $A\subseteq S_n$ then follows by summing over all $\pi\in A$. Thus, let us fix a permutation $\pi\in S_n$ with exactly $k$ descents. 

Recall the notation from \eqref{Eq33} and \eqref{Eq34}.
One can show that \[N_q(x)=\sum_{m=0}^q\frac{V(q,m+1)}{2^{q-1-2m}}x^{m+1}(1+x)^{q-1-2m}\] for all $q\geq 1$. Therefore, for $(q_0,\ldots,q_k)\in\Comp_{k+1}(n-k)$, we have \[\prod_{t=0}^kN_{q_t}(x)=\prod_{t=0}^k\sum_{m_t=0}^{q_t}\frac{V(q_t,m_t+1)}{2^{q_t-1-2m_t}}x^{m_t+1}(1+x)^{q_t-1-2m_t}\] \[=\sum_{m=0}^n\:\sum_{\substack{m_0+\cdots+m_k=m-k \\ m_0,\ldots,m_k\geq 0}}\frac{1}{2^{n-1-2m}}\left(\prod_{t=0}^kV(q_t,m_t+1)\right)x^{m+1}(1+x)^{n-1-2m}.\] Let $\mathcal V(\pi)\subseteq\Comp_{k+1}(n-k)$ be the set of compositions from Theorem~\ref{Thm10}. Invoking equation \eqref{Eq12} from that theorem, we obtain \[\sum_{\sigma\in s^{-1}(\pi)}x^{\des(\sigma)+1}=\sum_{(q_0,\ldots,q_k)\in\mathcal V(\pi)}\prod_{t=0}^kN_{q_t}(x)\] \[=\sum_{(q_0,\ldots,q_k)\in\mathcal V(\pi)}\sum_{m=0}^n\:\sum_{\substack{m_0+\cdots+m_k=m-k \\ m_0,\ldots,m_k\geq 0}}\frac{1}{2^{n-1-2m}}\left(\prod_{t=0}^kV(q_t,m_t+1)\right)x^{m+1}(1+x)^{n-1-2m}\] \[=\sum_{m=0}^n\frac{1}{2^{n-1-2m}}x^{m+1}(1+x)^{n-1-2m}\sum_{(q_0,\ldots,q_k)\in\mathcal V(\pi)}\sum_{(m_0',\ldots,m_k')\in\Comp_{k+1}(m+1)}\prod_{t=0}^kV(q_t,m_t'),\] where we have made the substitution $m_i'=m_i+1$. It turns out that \[\sum_{(q_0,\ldots,q_k)\in\mathcal V(\pi)}\sum_{(m_0',\ldots,m_k')\in\Comp_{k+1}(m+1)}\prod_{t=0}^kV(q_t,m_t')\] is the coefficient of $y^{m+1}$ in the polynomial on the right-hand side of \eqref{Eq13}, so it is equal to \[|\{\sigma\in s^{-1}(\pi):\peak(\sigma)=m\}|.\] Note that this is $0$ if $m>\frac{n-1}{2}$. Hence, \[\sum_{\sigma\in s^{-1}(\pi)}x^{\des(\sigma)}=\sum_{m=0}^{\left\lfloor\frac{n-1}{2}\right\rfloor}\frac{|\{\sigma\in s^{-1}(\pi):\peak(\sigma)=m\}|}{2^{n-1-2m}}x^m(1+x)^{n-1-2m}. \qedhere\]
\end{proof} 

We now give an example to show that Theorem~\ref{Thm3} is false if the term ``$\gamma$-nonnegative" is replaced by ``real-rooted." 

\begin{example}
Let \[\mu=6\,\,7\,\,8\,\,4\,\,5\,\,9\,\,10\,\,1\,\,2\,\,3\,\,11\quad\text{and}\quad\mu'=6\,\,7\,\,8\,\,9\,\,2\,\,3\,\,4\,\,5\,\,10\,\,1\,\,11.\] We claim that $\sum_{\sigma\in s^{-1}(\{\mu,\mu'\})}x^{\des(\sigma)}$ is not real rooted. To see this, we use the fact\footnote{The reader interested in seeing why this is the case can refer to \cite{DefantClass} for the full definition of $\mathcal V(\pi)$ and a description of how to compute it. However, the reader wishing to avoid this definition can still compute $\sum_{\sigma\in s^{-1}(\{\mu,\mu'\})}x^{\des(\sigma)}$ using a brute-force computer program that simply finds all of the permutations in $s^{-1}(\{\mu,\mu'\})$. \emph{A priori}, a brute-force computer program would not easily \emph{find} this example since it would have to search over \emph{subsets} of $S_{11}$.} that $\mathcal V(\mu)=\{(4,2,3),(3,3,3)\}$ and $\mathcal V(\mu')=\{(4,4,1)\}$. Using \eqref{Eq12}, we find that \[\sum_{\sigma\in s^{-1}\{\mu,\mu'\}}x^{\des(\sigma)}=\frac{1}{x}\sum_{\sigma\in s^{-1}(\mu)}x^{\des(\sigma)+1}+\frac{1}{x}\sum_{\sigma\in s^{-1}(\mu')}x^{\des(\sigma)+1}\] \[=\frac{1}{x}\left(N_4(x)N_2(x)N_3(x)+N_3(x)N_3(x)N_3(x)\right)+\frac{1}{x}N_4(x)N_4(x)N_1(x)\] \[=3 x^2 + 31 x^3 + 112 x^4 + 169 x^5 + 112 x^6 + 31 x^7 + 3 x^8,\] and this polynomial is not real-rooted. This example yields a negative answer to the last part of Question 12.1 in \cite{DefantClass}. 
\end{example}

\begin{remark}\label{Rem2} 
Theorem~\ref{Thm3} diverges from B\'ona's point of view in Conjecture~\ref{Conj5} by replacing the sum over $s^{-1}(\mathcal W_{t-1}(n))$ with a sum over $s^{-1}(A)$ for an arbitrary set $A\subseteq S_n$. This different viewpoint suggests that the sets of the form $\mathcal W_t(n)=s^{-1}(\mathcal W_{t-1}(n))$ might not be too special when compared with arbitrary sets of the form $s^{-1}(A)$ for $A\subseteq S_n$. 
If one believes Conjecture~\ref{Conj5}, then the preceding example lends credence to the hypothesis that the sets $\mathcal W_t(n)$ are special. On the other hand, if one does not believe there is anything special about the sets $\mathcal W_t(n)$, then this example hints that Conjecture~\ref{Conj5} might be false. 
\end{remark}

\section{Conjectures and Open Problems}\label{Sec:Conclusion}

We saw in Theorem~\ref{Thm12} that Conjectures~\ref{Conj3} and \ref{Conj4} cannot both be true. Our data suggests that Conjecture~\ref{Conj4} is true. Moreover, by plotting the points $(1/n,W_3(n))$ for $1\leq n\leq 174$, we have arrived at the following new conjecture.  

\begin{conjecture}\label{Conj11}
We have \[9.702<\lim_{n\to\infty}W_3(n)^{1/n}<9.704.\]
\end{conjecture}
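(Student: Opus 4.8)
The plan is to attack the two halves of Conjecture~\ref{Conj11} separately; since no current technique determines $L:=\lim_{n\to\infty}W_3(n)^{1/n}$ to within a window of width $0.002$, I expect a genuine proof would have to combine new structural input with heavy computation.

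For the lower bound $L>9.702$: by supermultiplicativity and Fekete's lemma one has $L=\sup_{n\geq 1}W_3(n)^{1/n}$ (equation \eqref{Eq20}), so it suffices to produce a single $n$ with $W_3(n)^{1/n}>9.702$. The recurrence of Theorem~\ref{Thm6} computes $W_3(n)$ in polynomially many arithmetic operations, so in principle one simply runs it far enough; the practical obstacle is that convergence is slow (at $n=174$ the ratio $W_3(n)/W_3(n-1)$ is still only about $9.49$), so the required $n$ is far beyond $174$ — plausibly in the thousands or tens of thousands, where the integers involved have $\Theta(n)$ digits. I would therefore run the recurrence in interval arithmetic tracking $\log W_3(n)$ rather than the exact integers: since every term of the recurrence is nonnegative there is no cancellation, the computation is numerically stable, and a modest working precision certifies $W_3(n)^{1/n}>9.702$ for the first $n$ at which the inequality actually holds. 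If even that is out of reach, a fallback is to replace the recurrence of Theorem~\ref{Thm6} by a relaxed version obtained by discarding all but a carefully chosen sub-collection of its (nonnegative) terms, producing a simpler \emph{linear} recurrence whose growth rate can be bounded below by $9.702$ by examining its characteristic polynomial.

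For the upper bound $L<9.704$: the best published bound is $L<12.53296$ from \cite{DefantPreimages}, so one needs something genuinely sharper. The natural plan mimics Section~\ref{Sec:2-Stack}: encode the recurrence of Theorem~\ref{Thm6} as a system of functional equations for the generating functions $\sum_n B^{(g)}_{\geq\ell}(n)w^n$, carrying auxiliary variables to mark $\ell$ and $g$; eliminate (as in the passage from \eqref{Eq6} to $\mathcal R(I(w,0),w)=0$) to obtain an algebraic equation for $I(w,0)=\sum_n W_3(n)w^n$ — or, failing that, a system still amenable to singularity analysis — and then read off $L=1/w_0$ from the dominant singularity $w_0$. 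If the elimination proves intractable, one instead bounds $B^{(g)}_{\geq\ell}(n)$ above by the solution of a relaxed recurrence — e.g.\ replacing the Catalan and $L_r(x,y)$ factors by crude exponential bounds and decoupling the $(a,b,g)$ interaction — whose generating function \emph{is} analyzable, and then optimizes the relaxation so as to push the resulting bound on $L$ below $9.704$.

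The hard part is the upper bound. Supermultiplicativity plus enough computation will, at least in principle, certify any lower bound that is genuinely below $L$, and $9.702$ ought to be; but there is at present no argument that provably forces $L<9.704$, and descending from $12.53$ to $9.704$ demands either a precise singularity analysis of a complicated multivariate functional system or a comparison argument tight enough to lose essentially nothing — which is precisely why the paper records this as a conjecture rather than a theorem.
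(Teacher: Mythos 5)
There is no proof of this statement to compare against: Conjecture~\ref{Conj11} is stated in the paper as an open conjecture, arrived at by extrapolating the computed values $W_3(n)$ for $n\leq 174$, and your own write-up is likewise a research plan rather than a proof, so it does not establish the statement either. The concrete gaps are these. For the lower half, your appeal to supermultiplicativity and \eqref{Eq20} only shows that \emph{if} $\lim_{n\to\infty}W_3(n)^{1/n}>9.702$ then some finite $n$ certifies it; it gives no way to know in advance that such an $n$ exists (that is exactly the content of the conjecture) nor any bound on how large it must be. The available data make the feasibility claim doubtful: the paper's Theorem~\ref{Thm11} gets only $8.659702$ from $W_3(174)^{1/174}$, and the ratio $W_3(174)/W_3(173)\approx 9.49$ is still well short of $9.702$, so ``run the recurrence far enough'' is an unquantified hope, and your fallback of a linear relaxation with growth rate provably above $9.702$ is not exhibited. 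For the upper half you concede there is no argument, and the two routes you sketch are speculative: the recurrence of Theorem~\ref{Thm6} carries \emph{two} catalytic parameters ($\ell$ and $g$) and quadratic interaction terms, so the single-catalytic-variable elimination used in Section~\ref{Sec:2-Stack} (Bousquet-M\'elou--Jehanne) does not directly apply, and nothing indicates that a ``relaxed'' recurrence could improve the known bound $12.53296$ from \eqref{Eq10} all the way down to $9.704$, a window of width $0.002$ around the conjectured value.

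To your credit, you correctly identify the asymmetry between the two halves (the lower bound is in principle finitely certifiable, the upper bound is the genuinely hard part), which is consistent with how the paper itself obtains its rigorous lower bound in Theorem~\ref{Thm11}; but as it stands neither inequality in the statement is proved, by you or by the paper.
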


We also believe that the decomposition lemma could be used (possibly along with a significant amount of work) to find a lower bound for $\lim\limits_{n\to\infty}W_4(n)^{1/n}$ that exceeds $9.704$. 

Turning back to the parities of the numbers $W_3(n)$, we have the following problem.  
\begin{problem}\label{Prob1}
Characterize the positive integers $n$ such that $W_3(n)$ is odd. 
\end{problem}

Problem~\ref{Prob1} seems more tractable now that we have obtained a recurrence for the numbers $W_3(n)$ in Theorem~\ref{Thm6}. Indeed, it appears as though there could be some patterns in the sequence whose initial terms are listed in \eqref{Eq21}. Solving this problem could require going through the proof of Theorem~\ref{Thm6} and seeing which terms in the various sums simplify when we reduce modulo $2$. 

Recall the definition of $\mathfrak g_t(m)$ from Section~\ref{Sec:Data}. We have the following conjectures. Conjectures~\ref{Conj8}, \ref{Conj9}, and \ref{Conj10} each contradict B\'ona's Conjecture~\ref{Conj6}. 

\begin{conjecture}\label{Conj7}
The limit $\displaystyle\lim_{n\to\infty}\frac{\log \mathfrak g_3(m)}{\log m}$ exists. 
\end{conjecture}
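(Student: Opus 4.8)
We outline a possible route to Conjecture~\ref{Conj7}. The plan is to prove the stronger statement that the sequence $\varepsilon_3(n)=W_3(n)\bmod 2$ is $2$-\emph{automatic}; equivalently, by Christol's theorem, that $\sum_{n\ge 0}\varepsilon_3(n)x^n$ is algebraic over $\mathbb{F}_2(x)$. Granting this, $\mathfrak g_3(m)=\sum_{n=1}^m\varepsilon_3(n)$ is a $2$-regular sequence; since $\mathfrak g_3$ is nondecreasing, the standard transfer-matrix (renewal) analysis of such summatory functions forces $\frac{\log\mathfrak g_3(m)}{\log m}$ to converge to a computable exponent $\alpha\in[0,1]$, namely the $\log_2$ of the Perron eigenvalue of an explicit nonnegative integer matrix attached to the minimal automaton. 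In particular the limit in the conjecture exists, and its value is pinned down. The cases $t=1,2$ fit this picture: $C_n$ is odd exactly when $n=2^k-1$, so $\varepsilon_1$ is supported on a $2$-automatic index set and $\alpha=0$; and the identity $\mathfrak g_2(2^r)=F_r$ recorded in Remark~\ref{Rem4} is precisely the behaviour along powers of $2$ of the summatory function of a $2$-regular sequence, consistent with $\alpha=\log_2\frac{1+\sqrt5}{2}$.

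To produce the algebraic equation, I would reduce the recurrence of Theorem~\ref{Thm6} modulo $2$. Set $\beta^{(g)}_\ell(w)=\sum_{n\ge 0}B^{(g)}_{\ge\ell}(n)w^n\in\mathbb{F}_2[[w]]$ and bundle these into a generating function $J(w,z,u)=\sum_{\ell,g}\beta^{(g)}_\ell(w)\,z^\ell u^g$ in two catalytic variables, one tracking tail length and one tracking the excess $\leg-\tl$. Under this bundling the triple convolution in the recurrence of Theorem~\ref{Thm6} becomes an honest product of power series, exactly as the double sum in \eqref{Eq3} turned into the product in \eqref{Eq4}; the term involving $C_{\ell-j+1}$ contributes, over $\mathbb{F}_2$, only the Catalan numbers $C_r$ with $r=2^k-1$, so it is governed by the lacunary series $\sum_{k\ge 0}z^{2^k-1}$, which is itself algebraic over $\mathbb{F}_2(z)$; and by Lucas' theorem the initial data $L_{\ell+1}(1,1)$ collapse to a simple $2$-automatic pattern. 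Carrying out, in characteristic $2$, the elimination performed in Section~\ref{Sec:2-Stack} for $t=2$ — specializing the catalytic variables, forming the appropriate discriminants and resultants (the analogue of Theorem~14 of \cite{Bousquet3}), and discarding extraneous factors — should then yield a single polynomial equation over $\mathbb{F}_2(w)$ satisfied by $\sum_n W_3(n)w^n=\sum_g\beta^{(g)}_0(w)$ reduced mod $2$.

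The main obstacle is the elimination of \emph{two} catalytic variables in characteristic $2$. Even for $t=2$ there is a single catalytic variable, and its elimination in Section~\ref{Sec:2-Stack} already required the kernel-method machinery of \cite{Bousquet3}; the extra variable $u$ recording legal spaces makes it genuinely unclear that the system admits any finite algebraic elimination. Indeed it is not even known whether $\sum_n W_3(n)x^n$ is algebraic over $\mathbb{Q}(x)$, and the recurrence of Theorem~\ref{Thm6}, while polynomial-time, does not settle this. The one source of optimism is that reduction modulo $2$ is extremely lossy: by Lucas' and Kummer's theorems almost all of the coefficients $L(r,i,j)$ and almost all Catalan numbers $C_r$ — hence almost all terms of the recurrence — vanish mod $2$ and survive only on thin $2$-automatic index sets, so the characteristic-$2$ system may well be finite-state even though its characteristic-$0$ lift apparently is not. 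Making this precise — by exhibiting the algebraic equation over $\mathbb{F}_2(w)$ (plausibly via a guided computer-algebra search over the reduced recurrence) or, equivalently, by bounding the $2$-kernel of the array $\bigl(B^{(g)}_{\ge\ell}(n)\bmod 2\bigr)_{n,\ell,g}$ — is where essentially all the difficulty lies. If full automaticity proves out of reach, a weaker but still sufficient fallback is to extract from the mod-$2$ recurrence a transfer-matrix estimate giving two-sided bounds $c_1m^{\alpha}\le\mathfrak g_3(m)\le c_2m^{\alpha}$, which by monotonicity of $\mathfrak g_3$ alone establishes the existence of $\lim_{m\to\infty}\frac{\log\mathfrak g_3(m)}{\log m}$.
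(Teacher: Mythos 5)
This statement is one of the paper's own open conjectures: the paper offers no proof of Conjecture~\ref{Conj7}, only numerical evidence from the parities listed in Section~\ref{Sec:Data} and the surrounding open problems (Problem~\ref{Prob1}, Conjectures~\ref{Conj8}--\ref{Conj10}). So there is no paper argument to compare against, and your proposal must stand on its own as a proof --- which it does not. The entire construction rests on the claim that $\varepsilon_3(n)=W_3(n)\bmod 2$ is $2$-automatic, i.e.\ that $\sum_n \varepsilon_3(n)w^n$ is algebraic over $\mathbb{F}_2(w)$, and that claim is nowhere established; you acknowledge yourself that ``essentially all the difficulty lies'' there. Establishing it would in particular solve Problem~\ref{Prob1}, which the paper leaves open, so this step is not a technicality but the whole problem. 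Moreover, the reduction of Theorem~\ref{Thm6} modulo $2$ is not obviously finite-state: the inner sums there are not pure convolutions (the bounds $\max\{2,g-a\}\leq b\leq g-1$, $a-1\leq i\leq n-b+1$ couple the ``excess'' parameter $g$ to $a$, $b$, and $i$ in a truncated way), so the passage to ``an honest product of power series'' in a variable $u$ tracking $g$ needs a genuine argument, and even then you face eliminating two catalytic variables in characteristic $2$, for which no analogue of the single-variable machinery used in Section~\ref{Sec:2-Stack} is supplied. It is not even known whether $\sum_n W_3(n)w^n$ is algebraic in characteristic $0$, and nothing in the recurrence forces the mod-$2$ kernel to be finite.

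Two smaller points. First, even granting $2$-regularity, the jump from a linear representation to ``$\frac{\log\mathfrak g_3(m)}{\log m}$ converges to the $\log_2$ of the Perron eigenvalue'' is stated, not proved: for a non-primitive or reducible representation the census of an automatic set can oscillate between scales, and you need an argument (e.g.\ monotonicity of $\mathfrak g_3$ plus existence of $\lim_r \frac1r\log\mathfrak g_3(2^r)$ via the nonnegativity of the matrices) rather than an appeal to a ``standard'' analysis. Second, the fallback paragraph (two-sided bounds $c_1m^\alpha\leq\mathfrak g_3(m)\leq c_2m^\alpha$ ``extracted from the mod-$2$ recurrence'') presupposes exactly the finite-state structure whose existence is in question. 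As a research program your outline is reasonable and consistent with the known $t=1,2$ behaviour recorded in Remark~\ref{Rem4}, but as it stands it is a conditional reduction of one open conjecture to a stronger open statement, not a proof of Conjecture~\ref{Conj7}.
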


\begin{conjecture}\label{Conj8}
We have $\displaystyle\liminf_{n\to\infty}\frac{\log \mathfrak g_3(m)}{\log m}>0$.
\end{conjecture}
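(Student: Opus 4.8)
The plan is to attack this through a modulo-$2$ analysis of the recurrence in Theorem~\ref{Thm6}, aiming to show that for some fixed constant $c>0$ and all large $r$ there are at least $2^{cr}$ integers $n\le 2^r$ with $W_3(n)$ odd. Since $\mathfrak g_3(2^r)$ counts exactly these, this gives $\frac{\log\mathfrak g_3(2^r)}{\log 2^r}\ge c$, and interpolating between consecutive powers of $2$ then yields $\liminf_{m\to\infty}\frac{\log\mathfrak g_3(m)}{\log m}\ge c>0$. The natural first step is to reduce the two-dimensional array $B_{\ge\ell}^{(g)}(n)$ modulo $2$: every coefficient in the recurrence equals $1$ except the Catalan numbers $C_{\ell-j+1}$ in the terms $B_{\ge j-1}^{(g-1)}(n)C_{\ell-j+1}$, and $C_m$ is odd precisely when $m=2^a-1$ (a short Kummer/Lucas computation), so mod $2$ the recurrence collapses onto a far sparser index set. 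One would then hope the resulting mod-$2$ array is $2$-automatic, or at least satisfies a self-similar ``folding'' identity under $n\mapsto 2n$ and $n\mapsto 2n+1$ — exactly as happens in the $t=1$ case, where $\sum_n C_n x^n\equiv\sum_{k\ge0}x^{2^k-1}\pmod 2$, and in the $t=2$ case, where passing through the functional equation $U=w(1+U)^3\equiv w(1+U+U^2+U^3)\pmod 2$ produces the Fibonacci count $\mathfrak g_2(2^r)=F_r$ quoted in Remark~\ref{Rem4}.

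A second, more combinatorial route uses the fertility formula \eqref{Eq14}. Since $\mathcal W_3(n)=s^{-1}(\mathcal W_2(n))$, we have $W_3(n)=\sum_{\pi\in\mathcal W_2(n)}\sum_{(q_0,\dots,q_k)\in\mathcal V(\pi)}\prod_{t}C_{q_t}$, so modulo $2$
\[W_3(n)\equiv\#\left\{(\pi,(q_0,\dots,q_k)):\pi\in\mathcal W_2(n),\ (q_0,\dots,q_k)\in\mathcal V(\pi),\ \text{each }q_t\text{ of the form }2^{a_t}-1\right\}\pmod 2.\]
This reduces the problem to counting, mod $2$, valid-hook-configuration data all of whose block sizes are one less than a power of two. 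One could then try to build a parity-preserving involution on the ``bad'' configurations (those with some block size not of the form $2^a-1$) and, separately, engineer an explicit polynomially large family of $n$ along which an odd number of ``good'' configurations survives. Carrying this out would also require a handle on $\mathcal W_2(n)$ mod $2$, which is available in principle by reducing the defining polynomial of Theorem~\ref{Thm7} modulo $2$.

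The main obstacle is that, unlike the $t=1$ and $t=2$ cases, no closed form for $W_3(n)$ is available — only the recurrence of Theorem~\ref{Thm6} — so proving the self-similarity (or automaticity) of the mod-$2$ array, and then upgrading it from ``unboundedly many odd terms'' to a genuine \emph{polynomial} lower bound on the odd-count, is precisely the non-routine step and is presumably why the statement is still a conjecture. A sensible intermediate target, well short of the full claim, is to first nail down the parity of $W_3(n)$ along one explicit arithmetic family (for instance $n=2^r-1$ or $n=2^r$), thereby showing $W_3(n)$ is odd infinitely often, and only afterward push for the density estimate. The data in \eqref{Eq21}, together with the observed inequality $\mathfrak g_2(m)\le\mathfrak g_3(m)$ for $13\le m\le 660$, suggests that in fact $\mathfrak g_3$ grows at least as fast as $\mathfrak g_2\sim m^{\log_2\varphi}$, so one might even attempt to prove the clean bound $\mathfrak g_3(m)\ge\mathfrak g_2(m)$ for all large $m$ by directly comparing the mod-$2$ reductions of the two recurrences.
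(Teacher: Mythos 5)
The statement you are addressing is not a theorem of the paper at all: it appears in Section~\ref{Sec:Conclusion} as Conjecture~\ref{Conj8}, offered together with Conjectures~\ref{Conj7}, \ref{Conj9}, and \ref{Conj10} as open problems, and the only support the paper gives is the numerical data in \eqref{Eq21} (the parities of $W_3(2n+1)$ for $n\leq 86$, computed via the recurrence of Theorem~\ref{Thm6}). So there is no proof in the paper to compare against, and your proposal does not supply one either: it is a research program whose load-bearing steps are all left as hopes. Concretely, (i) the mod-$2$ reduction of the recurrence in Theorem~\ref{Thm6} is correctly set up (the only non-unit coefficients are the Catalan numbers, and $C_m$ is odd exactly when $m=2^a-1$), but the claimed self-similarity or $2$-automaticity of the resulting array $B_{\geq\ell}^{(g)}(n)\bmod 2$ is asserted by analogy with $t=1,2$ and never established; the $t=3$ recurrence is a genuinely three-index object with a convolution over $a,b,i,j$, and nothing in the proposal shows it folds under $n\mapsto 2n$, $2n+1$; (ii) even granting some structure, you would still need to pass from ``$W_3(n)$ odd infinitely often'' to the density statement $\mathfrak g_3(2^r)\geq 2^{cr}$, and the proposal explicitly defers this upgrade; (iii) the second route via \eqref{Eq14} correctly gives $W_3(n)\equiv\#\{(\pi,(q_0,\ldots,q_k)):\pi\in\mathcal W_2(n),\,(q_0,\ldots,q_k)\in\mathcal V(\pi),\,q_t=2^{a_t}-1\}\pmod 2$, but the promised parity-preserving involution and the ``explicit polynomially large family of $n$'' are not constructed, and controlling $\mathcal V(\pi)$ uniformly over $\pi\in\mathcal W_2(n)$ is exactly the hard part.

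Your closing suggestion, to prove $\mathfrak g_3(m)\geq\mathfrak g_2(m)$ by ``directly comparing the mod-$2$ reductions of the two recurrences,'' also has no visible mechanism: the $W_2$ parity structure comes from the algebraic equation $U=w(1+U)^3$ (equivalently Theorem~\ref{Thm7} with a catalytic variable), while $W_3$ is governed by the multi-index recurrence of Theorem~\ref{Thm6}, and an inequality between counts of odd values does not follow from any term-by-term comparison of such differently shaped objects. None of this is a criticism of the plan as a plan --- it is essentially the route the paper itself hints at in the discussion surrounding Problem~\ref{Prob1} (``going through the proof of Theorem~\ref{Thm6} and seeing which terms simplify modulo $2$'') --- but as it stands the proposal proves nothing beyond what the paper already records as empirical evidence, and the conjecture remains open.
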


\begin{conjecture}\label{Conj9}
For every integer $m\geq 13$, we have $\mathfrak g_2(m)\leq\mathfrak g_3(m)$. 
\end{conjecture}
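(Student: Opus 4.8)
The plan is to reduce the inequality $\mathfrak g_2(m)\le\mathfrak g_3(m)$ to an explicit comparison of the sets $S_t:=\{n\ge 1:W_t(n)\text{ is odd}\}$ for $t=2,3$. Since $\sum_{\sigma\in\mathcal W_t(n)}x^{\des(\sigma)}$ is symmetric (Bóna's result, which also drops out of Theorem~\ref{Thm10}), $W_t(n)$ is even for even $n$, so each $S_t$ consists of odd integers and $\mathfrak g_t$ is simply the counting function of $S_t$. Thus it suffices to understand $S_2$ and $S_3$ well enough to see that $|S_2\cap[1,m]|\le|S_3\cap[1,m]|$ once $m\ge 13$. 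One should not expect the stronger statement $S_2\subseteq S_3$: the threshold $m\ge 13$ signals the existence of a small odd $n$ with $W_2(n)$ odd but $W_3(n)$ even, whose deficit in $\mathfrak g_2$ is only compensated once $m$ is large enough. For orientation it is worth noting that, the descent polynomial being symmetric of degree $n-1$, for odd $n$ the parity of $W_t(n)$ equals the parity of its middle coefficient, i.e., of the number of $t$-stack-sortable permutations of length $n$ with exactly $(n-1)/2$ descents; this reformulation meshes with the $\gamma$-nonnegativity of Theorem~\ref{Thm3} but does not by itself trivialize the count.

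For $t=2$ one can pin down $S_2$ exactly. From Theorem~\ref{Thm2}, $v_2(W_2(n))=1+v_2\binom{3n}{n}-v_2(n+1)$ because $2n+1$ is odd; by Kummer's theorem $v_2\binom{3n}{n}$ is the number of carries in the base-$2$ addition $n+2n$, which is governed by the maximal runs of $1$s in the binary expansion of $n$. Setting this valuation equal to $0$ gives an explicit digit condition characterizing $S_2$, and counting its elements $\le m$ recovers $\mathfrak g_2(2^r)=F_r$ and, more generally, expresses $\mathfrak g_2(m)$ as a piecewise-linear function of the binary digits of $m$.

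For $t=3$ the natural instrument is the recurrence of Theorem~\ref{Thm6}, reduced modulo $2$. The only external arithmetic input is the parity of the Catalan numbers, and $C_n$ is odd precisely when $n=2^j-1$; substituting this into the recurrence for $B^{(g)}_{\ge\ell}(n+1)\bmod 2$ annihilates most of the convolution terms, and the hope is that what remains is a tractable mod-$2$ recursion for the arrays $\{B^{(g)}_{\ge\ell}(n)\bmod 2\}$. The target is a description of $S_3$ in a form parallel to the one for $S_2$ — ideally again in terms of the binary expansion of $n$ — from which $\mathfrak g_2(m)\le\mathfrak g_3(m)$ for $m\ge 13$ would follow by a digit-by-digit comparison, the finitely many small $m$ being checked directly against the tabulated values (the paper already verifies the inequality for $13\le m\le 660$).

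The main obstacle is precisely this last step: extracting a clean closed form for $S_3$ from the mod-$2$ reduction of Theorem~\ref{Thm6}. The recurrence is a quadruple sum with an intricate summation range, and even after the Catalan terms collapse it is not clear that the induced parity dynamics on the family $\{B^{(g)}_{\ge\ell}(\cdot)\}$ settle into a pattern one can compare term-by-term with the $t=2$ description; $S_3$ may well be only eventually structured, forcing the comparison to be combined with a genuine finite verification. This is essentially Problem~\ref{Prob1} of the paper, which is strong evidence that it is the real crux.
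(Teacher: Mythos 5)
You should first note that the statement you are addressing is Conjecture~\ref{Conj9}: it is posed as an open conjecture, supported in Section~\ref{Sec:Data} only by the computed parities of $W_3(n)$ for $n\leq 174$ (equivalently $m\leq 660$ for the counting functions), and the paper offers no proof to compare against. Your submission is likewise not a proof but a research plan, and you have correctly located its fatal gap yourself: everything hinges on extracting from the mod-$2$ reduction of the recurrence in Theorem~\ref{Thm6} a usable description of the set $S_3=\{n:W_3(n)\text{ odd}\}$, and no such description is obtained. This is exactly Problem~\ref{Prob1} of the paper, which is open. Without it you have no lower bound whatsoever on $\mathfrak g_3(m)$; as things stand it is not even known that $W_3(n)$ is odd for infinitely many $n$ (that would already be a weak form of Conjecture~\ref{Conj8}), so the comparison $\mathfrak g_2(m)\leq\mathfrak g_3(m)$ for all $m\geq 13$ cannot be carried out by your method or any other currently available one.

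The parts of your outline that are solid are the preliminary reductions: $W_t(n)$ is even for even $n$ by the symmetry of the descent polynomial, so both $\mathfrak g_2$ and $\mathfrak g_3$ count odd $n$ only; and the $t=2$ side can indeed be made completely explicit, since Theorem~\ref{Thm2} gives $v_2(W_2(n))=1+v_2\binom{3n}{n}-v_2(n+1)$ and Kummer's theorem converts this into a condition on the binary expansion of $n$ (this is how $\mathfrak g_2(2^r)=F_r$ is obtained). But a characterization of $S_2$ alone proves nothing about the inequality, and your hope that the quadruple-sum recurrence for $B_{\geq\ell}^{(g)}(n)$ collapses mod $2$ into a digit-type pattern for $S_3$ is, at present, speculation; the data in \eqref{Eq21} does not visibly exhibit such a pattern, and the paper itself only conjectures (Conjectures~\ref{Conj8}--\ref{Conj10}) the qualitative consequences you would need. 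So the proposal should be regarded as a reasonable plan of attack on an open problem, not as a proof; the missing idea is precisely the parity analysis of the Theorem~\ref{Thm6} recurrence, and until that is supplied there is no argument to check beyond the finite verification the paper already reports.
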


\begin{conjecture}\label{Conj10}
We have $\lim\limits_{m\to\infty}(\mathfrak g_3(m)-\mathfrak g_2(m))=\infty$. 
\end{conjecture}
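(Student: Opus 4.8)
The plan is to work modulo $2$ and exploit the fact that symmetric convolutions collapse in characteristic $2$: over $\mathbb{F}_2$ one has $\sum_{i=0}^{n}a_ia_{n-i}=[n\text{ even}]\,a_{n/2}^2$, since every off-diagonal product $a_ia_{n-i}$ with $i\neq n-i$ occurs twice. The recurrence of Theorem~\ref{Thm6} for $B_{\geq\ell}^{(g)}(n)$ is assembled from such bilinear sums (a convolution in the length variable and a convolution in the tail variable $\ell$), so the first step is to see how much of it survives this collapse. I would package the reductions $B_{\geq\ell}^{(g)}(n)\bmod 2$ into a power series $\Phi(w,z,u)\in\mathbb{F}_2[[w,z,u]]$, with $w,z,u$ marking $n,\ell,g$, turn the recurrence into a functional equation for $\Phi$, and use the Frobenius map $P\mapsto P^2$ to linearize it as far as possible. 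The key sub-goal is to decide whether $\sum_{n\geq 1}\varepsilon_3(n)\,w^n$ is algebraic over $\mathbb{F}_2(w)$ after eliminating $z$ and $u$; if it is, then by Christol's theorem $\varepsilon_3$ is $2$-automatic, computed from the base-$2$ digits of $n$ by a finite automaton that one can read off from the functional equation.

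The companion sequence $\varepsilon_2$ is already explicit: from Zeilberger's formula $W_2(n)=\tfrac{2}{(n+1)(2n+1)}\binom{3n}{n}$, Kummer's theorem gives $v_2\binom{3n}{n}$ as the number of carries in the base-$2$ addition $n+2n$, so $\varepsilon_2(n)=1$ iff that carry count equals $v_2(n+1)-1$, a condition on the binary digits of $n$; this makes $\varepsilon_2$ $2$-automatic with a small automaton, consistent with the known value $\mathfrak g_2(2^r)=F_r$ and $\mathfrak g_2(m)=\Theta\!\big(m^{\log_2\varphi}\big)$. Once $\varepsilon_3$ is under comparable control, one analyzes $\mathfrak g_3(m)-\mathfrak g_2(m)=\sum_{n\leq m}\big(\varepsilon_3(n)-\varepsilon_2(n)\big)$ scale by scale. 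The cleanest sufficient condition is a nesting statement, $\varepsilon_2(n)\leq\varepsilon_3(n)$ for all large $n$ (i.e.\ $W_2(n)$ odd $\Rightarrow$ $W_3(n)$ odd), together with infinitely many $n$ for which $W_3(n)$ is odd but $W_2(n)$ is even; then $\mathfrak g_3(m)-\mathfrak g_2(m)$ is eventually nondecreasing and unbounded, which is the conjecture. Failing nesting, it would suffice to show $\mathfrak g_3(2^r)-\mathfrak g_2(2^r)\to\infty$, e.g.\ by extracting a strictly larger exponential growth rate for $\mathfrak g_3$ from its automaton, or the same rate with a larger leading constant and a divergent lower-order term.

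The main obstacle is that the parity behavior of $W_3(n)$ is exactly the content of the open Problem~\ref{Prob1}, so this is really a plan to resolve that problem first. The concrete difficulty is the double index $(\ell,g)$ in Theorem~\ref{Thm6}: the recurrence is not a clean convolution, because the sheltered piece $\pi_S^H$ enters through $B_{\geq j-1}^{(a)}$ while the unsheltered piece $\pi_U^H$ enters through $B_{\geq\ell-j+1}^{(b)}$ — the tail loses one unit on the southwest side, breaking the $j\leftrightarrow\ell+1-j$ symmetry — and the legal-space bookkeeping forces the constraint $g=a+b-m+1$ with truncated summation ranges. It is not clear that the $\mathbb{F}_2$-reduction closes into a finitely describable (algebraic, hence automatic) object in $w$ after $z$ and $u$ are eliminated; if it does not, the Christol route collapses and one needs a direct combinatorial understanding of which terms of the recurrence survive mod $2$. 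A sensible intermediate target, and a test of the whole approach, is Conjecture~\ref{Conj9} ($\mathfrak g_2(m)\leq\mathfrak g_3(m)$ for $m\geq 13$), which the same machinery should reach and which follows from the nesting statement above.
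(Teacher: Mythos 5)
Conjecture~\ref{Conj10} is stated in the paper as an open conjecture: the paper offers no proof, only the numerical evidence in \eqref{Eq21}, so there is no argument of the author's to compare yours against. Your submission is, by its own admission, a research plan rather than a proof, and both of its pillars are genuinely missing. First, the mod-$2$ collapse you lead with does not apply to the recurrence of Theorem~\ref{Thm6}: the identity $\sum_i a_ia_{n-i}\equiv a_{n/2}^2 \pmod 2$ requires convolving a sequence with itself, whereas the bilinear terms there have the form $B_{\geq j-1}^{(a)}(i)\,B_{\geq \ell-j+1}^{(b)}(n-i)$ with two different superscripts, asymmetric subscripts (the map $j\mapsto \ell-j+2$ is not an involution of $\{1,\ldots,\ell\}$), and truncated ranges tied to $g=a+b-m+1$; no pairing of off-diagonal terms is available, so you have not produced a functional equation over $\mathbb{F}_2$, let alone shown that eliminating the two catalytic variables $z,u$ leaves an algebraic series to which Christol's theorem applies. (The Bousquet-M\'elou--Jehanne machinery used in Section~\ref{Sec:2-Stack} handles one catalytic variable in characteristic $0$; nothing analogous is established here.) Whether $\varepsilon_3$ is $2$-automatic is essentially Problem~\ref{Prob1}, which you correctly identify as the real obstacle --- but that means the plan defers all of the content of the conjecture to an open problem.

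Second, the comparison step is also unsupported, and your ``cleanest sufficient condition'' is already contradicted by small data: $W_2(5)=\frac{2}{6\cdot 11}\binom{15}{5}=91$ is odd by Theorem~\ref{Thm2}, while the list \eqref{Eq21} gives $\varepsilon_3(5)=0$, so the termwise nesting $\varepsilon_2(n)\leq\varepsilon_3(n)$ fails at $n=5$. Your ``for all large $n$'' hedge keeps the hypothesis alive, but no mechanism is offered for it, and it would in any case be a strictly stronger (and equally unproven) statement than Conjecture~\ref{Conj9}. The fallback --- comparing exponential growth rates or leading constants of $\mathfrak g_3(2^r)$ and $\mathfrak g_2(2^r)$ from automata --- presupposes the unestablished automaticity of $\varepsilon_3$ and then still requires a spectral analysis that is not sketched. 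In short: the approach is a reasonable program (reduce Theorem~\ref{Thm6} mod $2$, aim at Problem~\ref{Prob1}, then sum), but as written it contains no step that is both new and complete, so it does not constitute progress toward a proof of Conjecture~\ref{Conj10} beyond what the paper already records as open.
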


\section{Acknowledgments}
I would like to express my deepest gratitude to Niven Achenjang, Amanda Burcroff, and Eric Winsor for writing computer programs that calculated the numbers $W_3(n)$ much faster than the author's original program. I would also like to thank Jay Pantone for running one of these programs on his computer for several days and for analyzing the resulting data. The contributions that these people made were paramount to the analysis discussed in Section~\ref{Sec:Data}. I thank Mikl\'os B\'ona and Doron Zeilberger for helpful conversations. I also thank Caleb Ji, who wrote a poem that inexplicably predicted I would make progress in the study of $3$-stack-sortable permutations. I thank the anonymous referees for helpful comments that improved the presentation of this article. 

The author was supported by a Fannie and John Hertz Foundation Fellowship and an NSF Graduate Research Fellowship.

\end{document}